\newcommand{\po}{\arrow[dr, phantom, "\ulcorner" near start]}
\newcommand{\oo}{\infty}
\newcommand{\cat}{\mathcal}
\newcommand{\colim}{\operatorname{colim}}
\let\term=\emph
\newcommand{\ac}{\textrm{ac}}
\newcommand{\qi}{\textrm{qi}}
\newcommand{\chhtpy}{\textrm{ch}}
\newcommand{\coker}{\operatorname{coker}}
\newcommand{\fg}{\textrm{fg}}
\newcommand{\heart}{\heartsuit}
\newcommand{\Aut}{\operatorname{Aut}}
\newcommand{\GL}{\operatorname{GL}}
\newcommand{\coloneq}{\mathrel{\mathop:}=}
\newcommand{\Hom}{\operatorname{Hom}}
\newcommand{\eg}{{\it e.g.,\ }}
\newcommand{\Sp}{\operatorname{Sp}}
\newcommand{\HZ}{H\mathbb{Z}}
\newcommand{\Z}{{\mathbb Z}}
\newcommand{\comp}{\mathbin{\circ}}
\newcommand{\id}{\operatorname{id}}
\newcommand{\cofiber}{\operatorname{cofiber}}
\newcommand{\fiber}{\operatorname{fiber}}
\newcommand{\twiddle}{\widetilde}
\newcommand{\Ch}{\operatorname{Ch}}
\newcommand{\Proj}{\operatorname{Proj}}
\newcommand{\Perf}{\operatorname{Perf}}
\newcommand{\im}{\operatorname{im}}
\newcommand{\fin}{\textrm{fin}}
\newcommand{\ie}{{\it i.e.,\ }}
\newcommand{\cf}{{\it cf.\ }}
\newcommand{\Mod}{\operatorname{Mod}}
\newcommand{\bdd}{\textrm{bdd}}
\newcommand{\triv}{\textrm{triv}}
\newcommand{\cell}{\operatorname{Cell}}
\newcommand{\Fun}{\textrm{Fun}}
\newcommand{\Wald}{\textrm{Wald}}
\newcommand{\tr}{\operatorname{tr}}
\newcommand{\cotr}{\operatorname{cotr}}
\newcommand{\cof}{\rightarrowtail}
\newcommand{\wto}{\xrightarrow{\sim}{}}
\newcommand{\x}{\times}
\newcommand{\inv}{\ensuremath{^{-1}}}
\newcommand{\incl}{\operatorname{incl}}
\newcommand{\isom}{\cong}
\newcommand{\Stab}{\operatorname{Stab}}
\newcommand{\cone}{\operatorname{cone}}
\newcommand{\Diff}{\textrm{Diff}}
\newcommand{\loops}{\Omega}
\newcommand{\sh}{\operatorname{sh}}
\newcommand{\hocolim}{\operatorname{hocolim}}
\theoremstyle{plain}
\newtheorem{theorem}{Theorem}[section]
\newtheorem*{theorem*}{Theorem}
\newtheorem{corollary}[theorem]{Corollary}
\newtheorem*{corollary*}{Corollary}
\newtheorem{lemma}[theorem]{Lemma}
\newtheorem{proposition}[theorem]{Proposition}
\theoremstyle{definition}
\newtheorem{claim}[theorem]{Claim}
\newtheorem{definition}[theorem]{Definition}
\theoremstyle{remark}
\newtheorem{example}[theorem]{Example}
\newtheorem{remark}[theorem]{Remark}
   \def\MR#1{}
\begin{document}

\title[Weight structures and algebraic $K$-theory]{Weight structures
 and the algebraic $K$-theory of stable $\oo$-categories}
\author{Ernest E.~Fontes}
\email{fontes.17@osu.edu}
\address{Department of Mathematics\\The Ohio State University\\231 W 18th Ave\\Columbus, OH 43210}

\date{\today}


\begin{abstract}
  \indent We introduce the notion of a bounded weight structure on a
  stable $\infty$-category and use this to prove the natural
  generalization of Waldhausen's sphere theorem: We show that the
  algebraic $K$-theory of a stable $\infty$-category with a bounded
  non-degenerate weight structure is equivalent to the algebraic
  $K$-theory of the heart of the weight structure.
\end{abstract}

\maketitle





\section{Introduction}\label{introduction}

Algebraic $K$-theory is an invariant of rings that arose from
considering Euler characteristics. The classical algebraic $K$-groups
$K_0$, $K_1$, and $K_2$ were defined algebraically and were known to
fit into exact sequences like those for cohomology theories. Quillen
introduced a homotopical construction of an algebraic $K$-theory
spectrum whose homotopy groups extended the classical definitions to
define higher algebraic $K$-theory groups $K_i$ for $i\geq 0$
\cite{quillen}.

The applications for algebraic $K$-theory span several fields. The
algebraic $K$-theory of rings of integers in number fields contains
arithmetic information: the class group, the Brauer group, and so on
\cite{MR3076731}.  On the other hand, the Whitehead torsion of a
manifold $M$ is controlled by $K_1(\mathbb{Z}[\pi_1 M])$.  In a vast
generalization of this, Waldhausen showed that the algebraic
$K$-theory of $\Sigma^{\infty} (\Omega M)_+$, the ``spherical group
ring of the loop space of $M$'', contains information about a
stabilization of $B\Diff(M)$~\cite{1126}.

$K$-theory is very difficult to compute. Trace methods compare it to
topological cyclic homology and compute that in favorable
conditions. Direct computations of $K$-theory involve reduction
theorems, results proving that some $K$-theory is equivalent to the
$K$-theory of something simpler. The chief examples of these reduction
theorems are Quillen's devissage theorem \cite{quillen}*{4} and
Waldhausen's fibration \cite{1126}*{1.6.4}, approximation
\cite{1126}*{1.6.7}, and sphere \cite{1126}*{1.7.1} theorems.

Following Quillen and Waldhausen, the modern view is that algebraic
$K$-theory is a functor of modules categories (or their subcategories)
to spectra. Recent work has extended the construction of algebraic
$K$-theory to higher categories that behave like module categories and
produced universal characterizations of the algebraic $K$-theory
functor in this setting \cites{BGT, barwick}. One consequence of this
work is that most of Quillen and Waldhausen's foundational theorems
about the behavior of algebraic $K$-theory have been established in a
very general context. In particular, this framework has permitted new
localization \cites{BM, barwick}, devissage \cite{barwickheart}, and
approximation \cites{barwick, fiore} results.

However, there has been no counterpart to the sphere theorem in the
$K$-theory of higher categories. In Waldhausen's setting, a category
$\cat C$ equipped with cofibrations, weak equivalences, a cylinder
functor, and a well-behaved homology theory is shown to have the same
$K$-theory, after stabilizing, as the stable homology
spheres. Waldhausen's homology theories are required to satisfy a
condition which makes them display all objects of $\cat C$ as cell
objects weakly built out of homology spheres. Essentially, the sphere
theorem reduces the $K$-theory of a category to the $K$-theory of a
subcategory when those objects can construct all the other objects in
the category as finite cell complexes.

To provide  such a theorem for  higher categories, we need  a suitable
notion of cell  object. We will use Bondarko's  theory of \emph{weight
  structures}.

Weight structures were introduced by Bondarko in
\cite{2009arXiv0903.0091B} and studied extensively on triangulated
categories \cites{MR2746283, Bondarko, 2013arXiv1312.7493B, Bondarko,
  2009arXiv0903.0091B}.  A weight structure is a collection of data on
a triangulated category which is designed to provide a weak notion of
cellular filtrations.  Advantageously, weight structures are specified
entirely on the homotopy category of our stable $\oo$-category
$\cat C$. The data of a weight structure consists of choices of
subcategories of objects built with cells in degrees $\leq n$ (or
$\geq n$) so that every object in $\cat C$ has at least one associated
$n$-skeleton for all $n\in \Z$. These skeleta are not assumed to be
functorial in any way and in fact rarely are. We emphasize that a
weight structure should be thought of as providing weak $n$-skeleta
for all objects in $\cat C$.

Familiar examples of weight structures include CW-structures on
spectra and truncation on chain complexes of finitely-generated
modules. The former is bounded on finite spectra and the latter is
bounded when the chain complexes are bounded. Work of Bondarko has
established a weight structure on Voevodsky's category of effective
motives whose heart consists of the Chow motives
\cite{2009arXiv0903.0091B}. On compact objects, this forms a bounded
weight structure as well.

This paper essentially completes the program of lifting fundamental
theorems of Quillen and Waldhausen to the algebraic $K$-theory of
higher categories by providing an analog of Waldhausen's \emph{sphere
  theorem} \cite{1126}*{1.7.1}. Where Waldhausen approaches finite cell
objects with homology functors, we use bounded weight structures.
\begin{theorem*}[Theorem \ref{spherethm}]
  If $\cat C$ is a stable $\oo$-category equipped with a bounded
  non-degenerate weight structure $w$, then the inclusion of the heart
  of the weight structure
  $\cat C_{\heart w}\hookrightarrow \cat C_{\heart}$ induces an
  equivalence on algebraic $K$-theory
  $K(\cat C) \simeq K(\cat C_{\heart w})$.
\end{theorem*}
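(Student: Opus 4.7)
The plan is to prove the theorem by an induction on the weight width of objects, using additivity at each stage. Boundedness reduces everything to finite weight subcategories; additivity peels off one weight layer at a time, and the heart ultimately absorbs the entire $K$-theory. Concretely, I would use boundedness to exhaust $\cat C$ by the full stable subcategories $\cat C_{[-n,n]}$ of objects with weight in $[-n,n]$. Since $\cat C = \bigcup_n \cat C_{[-n,n]}$ and algebraic $K$-theory of stable $\oo$-categories commutes with such filtered colimits, it suffices to show each inclusion $\cat C_{\heart w} \hookrightarrow \cat C_{[-n,n]}$ is a $K$-theory equivalence. I would then induct on $n$, with the base case $\cat C_{\heart w} = \cat C_{[0,0]}$ tautological.

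For the inductive step, the weight axioms supply, for each $X \in \cat C_{[-n,n]}$, a (non-canonical) weight truncation $X_{\leq n-1} \cof X \to Y[n]$ with $X_{\leq n-1} \in \cat C_{[-n,n-1]}$ and $Y \in \cat C_{\heart w}$. If this assignment were functorial, additivity would yield a splitting $K(\cat C_{[-n,n]}) \simeq K(\cat C_{[-n,n-1]}) \oplus K(\cat C_{\heart w})$; combined with the induction hypothesis and with the inclusion $\cat C_{\heart w} \hookrightarrow \cat C_{[-n,n-1]}$ furnishing the other summand, this would collapse everything onto $K(\cat C_{\heart w})$. To circumvent the lack of functoriality I would introduce an auxiliary stable $\infty$-category $\cat T$ of weight-filtered pairs $(X, X_{\leq n-1})$ equipped with a chosen truncation cofiber sequence, and verify that both forgetful functors induce equivalences on $K$-theory: the projection $\cat T \to \cat C_{[-n,n]}$ by an approximation-style argument leveraging the existence of truncations, and the associated-graded functor $\cat T \to \cat C_{[-n,n-1]} \times \cat C_{\heart w}$ sending $(X, X_{\leq n-1}) \mapsto (X_{\leq n-1}, Y)$ by additivity applied to the universal cofiber sequence carried by $\cat T$. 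Non-degeneracy will likely enter to ensure the truncation data is homotopically essentially unique in a suitable sense.

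The main obstacle is showing that the projection $\cat T \to \cat C_{[-n,n]}$ is a $K$-theory equivalence. Weight truncations are classically only existence-and-uniqueness-up-to-noncanonical-equivalence statements, so one must leverage the full $\infty$-categorical structure of $\cat C$ to show that the choice of truncation is invisible to $K$-theory. Concretely, I would hope to show that the space of truncations of a fixed object is suitably connected, or equivalently that the projection satisfies a Waldhausen-style approximation hypothesis. Bridging Bondarko's $1$-categorical weight structure axioms with the $\infty$-categorical behavior of $K$-theory is, I expect, the technical crux of the argument, and is where the heavy lifting of the paper will need to occur.
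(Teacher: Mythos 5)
There is a genuine gap, and in fact the key inductive claim is false as stated. You assert that (a suitably rigidified form of) additivity yields $K(\cat C_{[-n,n]}) \simeq K(\cat C_{[-n,n-1]}) \oplus K(\cat C_{\heart w})$. Iterating this together with your induction hypothesis would give $K(\cat C_{[-n,n]}) \simeq K(\cat C_{\heart w})^{2n+1}$, contradicting the theorem you are trying to prove. A concrete counterexample: take $\cat C = \Perf(k)$ for a field $k$ with the standard (brutal truncation) weight structure, and give the window subcategory $\cat C_{[-1,1]}$ the pair structure where a map is ingressive when its cofiber stays in the window. The cofiber sequence $k[0] \cof 0 \to k[1]$ is admissible there, forcing $[k[1]] = -[k[0]]$ in $K_0$, so $K_0(\cat C_{[-1,1]}) \cong \Z$, not $\Z \oplus \Z$. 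What fails is precisely the step you flag as the "main obstacle'': the projection $\cat T \to \cat C_{[-n,n]}$ from weight-filtered pairs is \emph{not} a $K$-theory equivalence, and no approximation-style argument can make it one. The obstruction is not the non-functoriality or non-uniqueness of weight truncations; it is that $\cat T$ contains nontrivial \emph{acyclic} objects --- pairs whose underlying object is a zero object but whose chosen truncation is not --- and these carry a full copy's worth of $K$-theory. (Equivalently: an ingression like $k[0] \cof 0$ in the window does not lift to an ingression of pairs with cofiber again a pair, so the Waldhausen structures do not match.)

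The paper's proof is built exactly around this discrepancy. It replaces your two-stage pairs by the Waldhausen $\oo$-category $\cell^\bdd_n \cat C$ of full bounded cellular filtrations, shows by additivity that $K(\cell^\bdd_n \cat C) \simeq \prod_{i \le n} K(\cat C_{w=i})$ (this is the part of your argument that does work --- it is your "associated graded'' identification, iterated), and then, instead of claiming the forgetful functor to $\cat C$ is a $K$-equivalence, invokes Barwick's fibration/localization theorem: $K(\cat C)$ is the \emph{cofiber} of $K((\cell^\bdd_n \cat C)^v) \to K(\cell^\bdd_n \cat C)$, where the acyclics $(\cell^\bdd_n \cat C)^v$ are shown (via the truncation functor) to contribute $\prod_{i \le n-1} K(\cat C_{w=i})$ --- one fewer factor. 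The quotient of "all factors'' by "all but one'' is the single copy $K(\cat C_{\heart w})$. To repair your argument you would need to replace "the projection is a $K$-equivalence'' by a fiber sequence $K(\cat T^v) \to K(\cat T) \to K(\cat C_{[-n,n]})$ and identify $K(\cat T^v)$, at which point you have essentially reconstructed the paper's proof (together with the nontrivial verification that the labeled category has enough cofibrations and that the labeled $K$-theory agrees with that of the localization).
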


The main theorem is an example of an equivalence
between $K$-theory spectra that is \emph{not} induced by an
equivalence of derived categories. Quillen's devissage theorem
\cite{quillen}*{4} provides such an equivalence. Blumberg and
Mandell's devissage theorem for ring spectra \cite{BM} and the
closely-related theorem of the heart due to Barwick
\cite{barwickheart} are the only other such non-trivial equivalences
known to the author.

In addition to lifting Waldhausen's sphere theorem to quasicategories,
the main theorem generalizes several previous results
in the literature. Bondarko proves a version of the theorem on $K_0$ for
triangulated categories equipped with bounded weight structures
\cite{MR2746283}*{5.3.1}. We reproduce his result as a corollary of
our theorem.
\begin{corollary*}[Corollary \ref{bondarko_k0}]
  If $\cat T$ is a triangulated category equipped with a
  non-degenerate bounded weight structure $w$, then
  $K_0(\cat T) \simeq K_0(\cat T_{\heart w})$.
\end{corollary*}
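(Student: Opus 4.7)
The plan is to reduce the triangulated statement to the main theorem by producing a stable $\oo$-categorical enhancement of $\cat T$ compatible with the weight structure, and then taking $\pi_0$ of the resulting equivalence of $K$-theory spectra. The reduction itself is formal: for any stable $\oo$-category $\cat C$, the abelian group $K_0(\cat C) \coloneq \pi_0 K(\cat C)$ coincides with the Grothendieck group of the underlying triangulated homotopy category $h\cat C$, so an equivalence $K(\cat C) \simeq K(\cat C_{\heart w})$ at the spectrum level descends to an isomorphism $K_0(h\cat C) \cong K_0(h\cat C_{\heart w})$ on classical Grothendieck groups.

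To produce the enhancement I would take $\cat C$ to be a stable $\oo$-categorical lift of the bounded chain homotopy category $K^b(\cat T_{\heart w})$ of the additive category $\cat T_{\heart w}$. This $\cat C$ carries a canonical bounded non-degenerate weight structure whose heart is equivalent to $\cat T_{\heart w}$, realized as complexes concentrated in degree zero. I would then invoke Bondarko's weight complex construction to compare $\cat T$ with $h\cat C$: although the weight complex is not a triangulated functor in general, the weight filtration of any $X \in \cat T$ exhibits $[X] \in K_0(\cat T)$ as an alternating sum of the classes of its weight complex terms, all of which lie in the common heart $\cat T_{\heart w}$. This shows that the inclusion $\cat T_{\heart w} \hookrightarrow \cat T$ surjects onto $K_0(\cat T)$, and the weight complex provides a well-defined inverse on $K_0$, yielding an identification $K_0(\cat T) \cong K_0(h\cat C)$.

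Combining the pieces, the main theorem applied to $\cat C$ gives the chain
\[
  K_0(\cat T) \cong K_0(h\cat C) \cong K_0(\cat C) \cong K_0(\cat C_{\heart w}) \cong K_0(\cat T_{\heart w}).
\]
The step I expect to be most delicate is verifying that the weight complex construction, defined on objects of $\cat T$ only up to chain homotopy, descends to a well-defined group homomorphism on $K_0$ and that it truly inverts the inclusion of the heart; this is essentially the content of Bondarko's direct argument in \cite{MR2746283}. Once this technicality is handled, the rest of the deduction is a formal combination of the main theorem with the identification of $K_0$ of a stable $\oo$-category with $K_0$ of its homotopy category.
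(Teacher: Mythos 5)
There is a genuine gap here: your argument does not actually derive the corollary from Theorem \ref{spherethm}; its load-bearing step is Bondarko's original proof of the very statement being proved. The isomorphism $K_0(\cat T)\cong K_0(h\cat C)$ that you propose to extract from the weight complex construction requires showing that the assignment sending $X$ to the alternating sum of the terms of a weight complex is independent of the highly non-unique, non-functorial choices of weight decompositions, and is additive on distinguished triangles. You concede that this is ``essentially the content of Bondarko's direct argument'' --- but that direct argument already produces a well-defined inverse $K_0(\cat T)\to K_0(\cat T_{\heart w})$ to the map induced by the inclusion of the heart, which \emph{is} the corollary. Once that step is granted, the detour through a stable enhancement of $K^b(\cat T_{\heart w})$ and the appeal to the main theorem are redundant; without it, nothing in your chain connects $\cat T$ to the enhanced category. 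As written, the proof is circular rather than an application of Theorem \ref{spherethm}.

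The paper's intended deduction is much more direct. A weight structure on a stable $\oo$-category $\cat C$ is, by definition, a weight structure on the triangulated category $h\cat C$; so one regards $\cat T$ as $h\cat C$ for a stable $\oo$-categorical model $\cat C$ (an enhancement hypothesis left implicit in the corollary), applies Theorem \ref{spherethm} to obtain $K(\cat C)\simeq K(\cat C_{\heart w})$, and takes $\pi_0$, using your correct observation that $\pi_0 K(\cat C)$ is the Grothendieck group of $h\cat C$ and that $\pi_0 K(\cat C_{\heart w})$ is the (split) Grothendieck group of $\pi_0 \cat C_{\heart w}=\cat T_{\heart w}$. Your instinct to avoid assuming an enhancement of $\cat T$ itself is reasonable --- that is a genuine subtlety the paper glosses over --- but the bridge you build from $\cat T$ back to the enhanced world is exactly the classical theorem, so you have not reduced the statement to the main theorem; you have reproved it by Bondarko's method and then appended an unnecessary appeal to the main theorem.
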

The projective weight structure on bounded chain complexes gives an
equivalence of $K(R)$ with the $K$-theory of the category of
finitely-generate projective $R$-modules. Generalizing this result
slightly, we produce a new proofs of the Gillet--Waldhausen Theorem
and the Resolution Theorem for exact categories:
\begin{corollary*}[Corollary \ref{gillet-wald}, Gillet--Waldhausen Theorem, \cite{MR1106918}*{1.11.7}]
  For an exact category $\cat E$ which is idempotent-complete,
  Quillen's algebraic $K$-theory $K(\cat E)$ is homotopy equivalent to
  the Waldhausen algebraic $K$-theory of $\Ch^\bdd(\cat E)$, the
  Waldhausen category of bounded chain complexes on $\cat E$ where
  cofibrations are taken to be admissible monomorphisms and weak
  equivalences are quasi-isomorphisms of chain complexes.
\end{corollary*}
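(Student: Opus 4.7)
The plan is to apply Theorem~\ref{spherethm} to the bounded derived $\oo$-category $\cat D^\bdd(\cat E)$ of $\cat E$, equipped with the \emph{stupid truncation} (or ``brutal truncation'') weight structure. Here $\cat D^\bdd(\cat E)_{w\leq 0}$ consists of objects that are quasi-isomorphic to complexes concentrated in degrees $\leq 0$, and $\cat D^\bdd(\cat E)_{w\geq 0}$ is defined dually. The weight decomposition of a complex $X$ is produced by its naive truncation at each degree, which splits the underlying graded object into the part above and the part below that degree and assembles into a cofiber sequence in $\cat D^\bdd(\cat E)$. Boundedness of the weight structure is immediate from the boundedness of complexes in $\Ch^\bdd(\cat E)$, and non-degeneracy follows because a complex lying in the intersection of all shifts of $\cat D^\bdd(\cat E)_{w\leq 0}$ must be acyclic, hence zero in $\cat D^\bdd(\cat E)$.

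Next I would identify the heart $\cat D^\bdd(\cat E)_{\heart w}$ with $\cat E$. The heart consists of objects quasi-isomorphic to a complex concentrated in a single degree; idempotent-completeness of $\cat E$ guarantees that the inclusion $\cat E \hookrightarrow \cat D^\bdd(\cat E)$ as complexes in degree zero hits every such object, so the heart is equivalent to $\cat E$ as an additive category. Moreover, the admissible short exact sequences on the heart---those cofiber sequences in $\cat D^\bdd(\cat E)$ whose vertices lie in the heart---correspond to the admissible short exact sequences of $\cat E$, so the $K$-theory of the heart recovers Quillen's $K(\cat E)$. Invoking Theorem~\ref{spherethm} then gives $K(\cat D^\bdd(\cat E)) \simeq K(\cat E)$.

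Finally, to bring in the Waldhausen $K$-theory of $\Ch^\bdd(\cat E)$, I would appeal to the standard comparison identifying it with the $K$-theory of the stable $\oo$-category $\cat D^\bdd(\cat E)$. The Waldhausen category $\Ch^\bdd(\cat E)$, with admissible monomorphisms as cofibrations and quasi-isomorphisms as weak equivalences, has a cylinder functor, and its Dwyer--Kan localization along quasi-isomorphisms produces $\cat D^\bdd(\cat E)$. Results such as those of Barwick or Blumberg--Gepner--Tabuada then yield a natural equivalence between the Waldhausen $K$-theory of $\Ch^\bdd(\cat E)$ and $K(\cat D^\bdd(\cat E))$, completing the chain of equivalences.

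The main obstacle is a careful bookkeeping step rather than a deep one: I need to verify that the Waldhausen $K$-theory of $\Ch^\bdd(\cat E)$ agrees with the $\oo$-categorical $K$-theory of $\cat D^\bdd(\cat E)$, and that the exact structure on the heart of the weight structure matches the original exact structure on $\cat E$ (so that $K$-theory of the heart coincides with Quillen's $K(\cat E)$). Idempotent completeness enters exactly at the identification of the heart with $\cat E$ itself rather than with its idempotent completion. All other ingredients---the construction of the stupid truncation weight structure and its basic properties---are essentially formal lifts of Bondarko's triangulated-categorical arguments to the stable $\oo$-categorical setting and should go through without trouble.
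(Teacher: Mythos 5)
There is a genuine gap at the very first step: the ``stupid truncation'' weight structure you describe does not exist on the derived $\oo$-category $\cat D^\bdd(\cat E)$. Orthogonality fails there. For instance, with $\cat E$ the finitely generated abelian groups, $\Z/p$ placed in degree $0$ lies in your proposed $\cat D^\bdd(\cat E)_{w\leq 0}$ and $\Sigma(\Z/p)$ lies in $\cat D^\bdd(\cat E)_{w\geq 1}$, yet $\Hom_{\cat D^\bdd}(\Z/p,\Sigma(\Z/p))=\operatorname{Ext}^1(\Z/p,\Z/p)\neq 0$. The brutal truncation only yields a weight structure on the stable $\oo$-category of bounded complexes \emph{localized at chain homotopy equivalences} (there a map from a complex concentrated in degrees $\leq 0$ to one concentrated in degrees $\geq 1$ is literally zero), and that is the category the paper uses; its heart is $\cat E$, giving $K(\Ch^\bdd(\cat E),v_{\chhtpy})\simeq K(\cat E)$ via the sphere theorem. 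On a genuine derived category one can only salvage a weight structure by restricting $w\leq 0$ to complexes of \emph{projectives} in degrees $\leq 0$, whose heart is the projectives rather than $\cat E$ --- which is why that route proves the Resolution Theorem, not Gillet--Waldhausen. Your non-degeneracy remark (``acyclic, hence zero'') is a symptom of the same conflation: in the chain-homotopy category, where the weight structure actually lives, acyclic complexes are not zero objects.

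Consequently your proposal skips the real content of the theorem, namely the passage from chain homotopy equivalences to quasi-isomorphisms. The paper handles this in a separate lemma: a localization/fibration sequence exhibits the fiber of $K(\Ch^\bdd(\cat E),v_{\chhtpy})\to K(\Ch^\bdd(\cat E),v_{\qi})$ as the $K$-theory of bounded acyclic complexes, which is then shown to vanish by filtering acyclics by length and applying additivity to the exact sequence of endofunctors splitting off an elementary (contractible) acyclic complex; idempotent-completeness of $\cat E$ is needed precisely so that $\im(d_n)$ and $\coker(d_n)$ stay in $\cat E$. Your appeal to ``the standard comparison'' between Waldhausen $K$-theory of $(\Ch^\bdd(\cat E), \qi)$ and the $K$-theory of $\cat D^\bdd(\cat E)$ does not substitute for this step, because the sphere theorem was never validly applied to $\cat D^\bdd(\cat E)$ in the first place. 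To repair the argument, run the sphere theorem on the chain-homotopy model and then supply the vanishing of $K$ of the bounded acyclics as a separate devissage step.
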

\begin{corollary*}[Corollary \ref{resolution_theorem}, Resolution Theorem, \cite{MR3076731}*{Theorem
    V.3.1}]
  Let $\cat P$ be a full subcategory of an exact category $\cat E$ so
  that $\cat P$ is closed under extensions and under kernels of
  admissible surjections in $\cat E$. Suppose in addition that every
  object $M$ in $\cat E$ admits a finite $\cat P$-resolution:
  \begin{equation*}
    0 \to P_n\to P_{n-1} \to \cdots \to P_1 \to P_0 \to M \to 0
  \end{equation*}
  then $K(\cat P)\simeq K(\cat E)$.
\end{corollary*}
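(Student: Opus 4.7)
My strategy is to pass to bounded chain complexes and invoke the sphere theorem (Theorem~\ref{spherethm}) directly. First I would apply the Gillet--Waldhausen corollary (Corollary~\ref{gillet-wald}) to identify $K(\cat E)$ with $K(\Ch^\bdd(\cat E))$, viewing the latter as the $K$-theory of the stable $\infty$-category obtained by inverting quasi-isomorphisms in $\Ch^\bdd(\cat E)$.

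Next, I would equip $\Ch^\bdd(\cat E)$ with a bounded, non-degenerate weight structure $w$ whose heart is $\cat P$. Declare $\Ch^\bdd(\cat E)_{w\geq 0}$ to consist of those complexes quasi-isomorphic to a complex of $\cat P$-objects supported in non-negative degrees, and dually for $w\leq 0$; the heart then recovers $\cat P$ concentrated in degree zero. The weight decomposition of an object $M \in \cat E$ is built from its finite $\cat P$-resolution
\begin{equation*}
0 \to P_n \to \cdots \to P_0 \to M \to 0:
\end{equation*}
the admissible surjection $P_0 \to M$ fits into a cofiber sequence $P_0 \to M \to \Sigma K$ in $\Ch^\bdd(\cat E)$, where $K$ is the kernel in $\cat E$. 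By the closure of $\cat P$ under kernels of admissible surjections, $K$ itself has a $\cat P$-resolution of length $n-1$, so $K \in w_{\geq 0}$ and $\Sigma K \in w_{\geq 1}$ as required. This extends to weight decompositions of arbitrary bounded complexes of $\cat E$-objects by induction on amplitude, using closure of $\cat P$ under extensions to splice the pieces. Boundedness and non-degeneracy of $w$ both follow from the finiteness of resolutions and the boundedness of the complexes.

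With $w$ in place, Theorem~\ref{spherethm} yields $K(\Ch^\bdd(\cat E)) \simeq K(\cat P)$, and chaining with the Gillet--Waldhausen equivalence of the first step produces the desired $K(\cat E) \simeq K(\cat P)$.

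The main obstacle is verifying the weight-structure axioms in full generality — in particular, producing coherent weight decompositions for \emph{arbitrary} bounded complexes of $\cat E$-objects, rather than just for single objects placed in degree zero. This is precisely the classical input to the Resolution Theorem and is typically handled by a Cartan--Eilenberg-style double-complex construction, choosing compatible termwise $\cat P$-resolutions and passing to a total complex. The finiteness of each pointwise resolution and the two closure hypotheses on $\cat P$ are together exactly what is needed to keep the resulting construction inside $\Ch^\bdd(\cat P)$ and inside the proposed weight subcategories.
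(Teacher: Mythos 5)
Your proposal is correct and follows essentially the same route as the paper: both equip $\Ch^\bdd(\cat E)$ with a weight structure obtained by brutally truncating fixed finite $\cat P$-resolutions so that the heart is $\cat P$ (in degree $0$), apply Theorem \ref{spherethm}, and identify $K(\Ch^\bdd(\cat E))$ with $K(\cat E)$ via Corollary \ref{cor_K_of_E} and the Gillet--Waldhausen comparison of weak equivalences. The step you flag as the main obstacle---upgrading $\cat P$-replacements from single objects to arbitrary bounded complexes---is precisely the point the paper also delegates to the classical Cartan--Eilenberg-style argument without further detail.
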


The chief motivation for the main theorem is that $\cat C_{\heart w}$
is simpler than $\cat C$ and so its $K$-theory can be described in an
alternate manner. In particular, all cofiber sequences in the heart
split in the homotopy category.  Hence, $K(\cat C_{\heart w})$ permits
a description in the spirit of Quillen's plus construction for
$K$-theory:
\begin{theorem*}[Corollary \ref{plus-equals-Q}]
  If $\cat C_{\heart w}$ is the heart of a weight structure then all
  cofiber sequences split in the homotopy category and
  \begin{equation*}
    K(\cat C_{\heart w}) \simeq K_0(\cat C) \x (\hocolim_{\cat C_{\heart w, \dagger}} B\Aut(X))^+
  \end{equation*}
  where $[X]$ is an equivalence class of objects in
  $\cat C_{\heart w}$ and $(-)^+$ denotes the group completion of the
  topological monoid.
\end{theorem*}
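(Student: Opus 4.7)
The plan is to first derive the splitting of cofiber sequences from the orthogonality axiom of the weight structure, and then reduce the $K$-theory computation to a standard group-completion question.

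For the splitting, I would take a cofiber sequence $A\to B\to C$ with $A, B, C\in \cat C_{\heart w}$ and apply $\Hom(C,-)$ to obtain the long exact sequence
\begin{equation*}
\Hom(C, B)\to \Hom(C, C)\to \Hom(C, A[1]).
\end{equation*}
Since $A\in\cat C_{w\geq 0}$ forces $A[1]\in \cat C_{w\geq 1}$, and $C\in \cat C_{w\leq 0}$, the defining orthogonality $\Hom(\cat C_{w\leq 0},\cat C_{w\geq 1})=0$ gives $\Hom(C, A[1])=0$. Hence $\id_C$ lifts to a splitting $C\to B$, and the cofiber sequence splits in the homotopy category.

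With splitting established, I would argue that the Waldhausen $K$-theory of $\cat C_{\heart w}$, which now inherits only split cofibrations, is computed by additive $K$-theory. By the group-completion theorem, this equals the group completion of the classifying space of the symmetric monoidal groupoid of equivalences in $\cat C_{\heart w}$, and that classifying space decomposes as $\coprod_{[X]} B\Aut(X) \simeq \hocolim_{\cat C_{\heart w, \dagger}} B\Aut(X)$. The group completion further splits as $(\pi_0)^{\mathrm{gp}}$ times the group completion of the basepoint component; identifying $(\pi_0)^{\mathrm{gp}} = K_0(\cat C_{\heart w}) \simeq K_0(\cat C)$ via the main theorem \ref{spherethm} then yields the asserted formula.

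The hardest step will be the middle one: cleanly equating the $\oo$-categorical Waldhausen $K$-theory of the split-exact stable $\oo$-category $\cat C_{\heart w}$ with the group completion of its symmetric monoidal groupoid. Classically this is the Segal/McDuff--Segal comparison, and I expect that in the $\oo$-categorical setting it follows from the universal characterization of algebraic $K$-theory of \cite{BGT, barwick}; alternatively one can construct the equivalence directly by checking that the $S_\bullet$-construction restricted to split cofiber sequences produces a Segal $\Gamma$-space whose group completion is manifestly the one above.
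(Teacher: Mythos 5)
Your proposal is correct and follows essentially the same route as the paper: the splitting is the paper's Proposition \ref{split-heart} (orthogonality forces $\Hom(C,\Sigma A)=0$, giving the splitting of the triangle $A\to B\to C$), and for the $K$-theoretic step the paper implements exactly your second alternative, namely the inclusion of the coproduct nerve $N_\bullet \cat C_{\heart w}\to S_\bullet \cat C_{\heart w}$ shown to be an equivalence by Waldhausen's \cite{1126}*{1.8.7} argument (Proposition \ref{NequalsS}), followed by Segal's group-completion identification and $K_0(\cat C_{\heart w})\simeq K_0(\cat C)$ from Theorem \ref{spherethm}. One caution: your first alternative, invoking the universal characterization of \cite{BGT}, does not apply directly because $\cat C_{\heart w}$ is additive but not stable --- precisely the subtlety the introduction flags when comparing with Sosnilo --- so the direct $N_\bullet\simeq S_\bullet$ comparison is the step you must actually carry out.
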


\subsection{Relation to other results}

It would be appropriate at this point to issue a word of clarification
about the various theorems relating to hearts. Neeman proved a theorem
of the heart for the $K$-theory of triangulated categories
\cites{MR1724625, MR1793672, MR1828612} which was later proven more
generally for exact $\oo$-categories by Barwick
\cite{barwickheart}. In both cases, the theorem said that a bounded
$t$-structure on the homotopy category induced an equivalence $K(\cat
C) \simeq K(\cat C_{\heart t})$ between the $K$-theory of the category
and that of the heart of the $t$-structure. While there are
superficial similarities between the theorems---and, as it turns out,
the definitions of weight and $t$-structures---these theorems have
little to do with one another. The theorems for $t$-structures
generalize Quillen's devissage theorem whereas the theorem for weight
structures generalizes Waldhausen's sphere theorem. A $t$-structure
provides a Postnikov tower for every object of $\cat C$ whereas a
weight structure provides a cellular filtration. Furthermore, while
$t$-structures (and Postnikov towers) are functorial, weight
structures (and cellular filtrations) rarely are.

While this paper was transitioning from dissertation to journal form,
several closely related results have appeared in the
literature.

Sosnilo proves that the algebraic $K$-theory of $\cat C_{\heart w}$,
or $K(\underline{Hw}_\oo)$ in his notation, is homotopy equivalent
$K(\cat C)$ \cite{sosnilo}*{4.1}. Although this result appears
identical to Theorem \ref{spherethm}, his definition of $K(\cat
C_{\heart w})$ differs from ours. The $K$-theory functor in Sosnilo's
work is the nonconnective $K$-theory of stable $\oo$-categories of
\cite{BGT}. Since $\cat C_{\heart w}$ is an additive $\oo$-category,
Sosnilo's $K(\cat C_{\heart w})$ is defined to be the $K$-theory of
the formal closure of $\cat C_{\heart w}$ under finite limits and
colimits \cite{sosnilo}*{p.~15}. If $w$ is bounded, the closure of
$\cat C_{\heart w}$ under finite limits and colimits is canonically
equivalent to $\cat C$.

In contrast, Theorem \ref{spherethm} uses Barwick's $K$-theory functor
for Waldhausen $\oo$-categories without passing through the $K$-theory
of stable $\oo$-categories. {\it A priori,} Barwick's construction for
additive $\oo$-categories need not agree with the $K$-theory of their
formal closure. Theorem \ref{spherethm} proves that it does in the
presence of a bounded weight structure.

Heleodoro appears to have arrived independently at the proof of the
sphere theorem, which appears as \cite{2019arXiv190700384H}*{Theorem
  5} and has a similar proof to the one presented in this
paper. Mochizuki claims the sphere theorem and the theorem of the
heart for $t$-structures as a consequence of a more general result
\cite{2019arXiv190601589M}.

In private correspondence \cite{antieau_email}, Ben Antieau describes
an alternative approach to proving the sphere theorem with the
technology of \cite{EKMM} which may provide an advantageous
perspective for the reader. For any finite set of objects $X$ in $\cat
C_{\heart w}$, let $E(X)$ be the endomorphism spectrum of the finite
wedge of objects in $X$. Since the objects of $X$ lie in the heart of
a weight structure, $E(X)$ is a connective ring spectrum. The additive
closure of $X$ in $\cat C$ is a full subcategory of $\cat C_{\heart
  w}$ and can be identified with $\oo$-category of compact projective
$E(X)$-modules, $\Proj(E(X))$. The full idempotent-complete stable
subcategory of $\cat C$ generated by $X$ can be identified with
$\Perf(E(X))$, the $\oo$-category of compact $E(X)$-modules. We can
now utilize \cite{EKMM}*{IV.7.1} to conclude that $K(\Proj(E(X))) \simeq
K(\Perf(E(X)))$.

Algebraic $K$-theory commutes with filtered colimits, so we take a
colimit over all finite subsets $X$ of the heart. The colimit of the
additive closures will be the heart $\cat C_{\heart w}$, and the
colimit of the stable idempotent-complete closures will be $\cat C$
because the weight structure is bounded. We conclude $K(\cat C_\heart
w) \simeq \colim_X K(\Proj(E(X))) \simeq \colim_X K(\Perf(E(X)))
\simeq K(\cat C)$. Instead of utilizing this approach, we present an
internal proof to the result in Barwick's algebraic $K$-theory machine
that we hope will be of independent interest.

\subsection{Outline of paper}

In section \ref{background}, we provide a brief introduction Barwick's
Waldhausen $\oo$-categories and highlight the pair structures of
interest for this paper.

In section \ref{basicprops}, we define weight structures and recount
some of their properties. We develop a yoga for manipulating weights
and recount results of Bondarko on generating weight structures on
categories. Finally, we compare the language of weight structures to
Waldhausen's formulation of the sphere theorem.

In section \ref{cellfiltrations}, we build the technical tools we
require for the proof of the main theorem. We introduce cellular
filtrations arising from weight structures and prove that they form
Waldhausen $\oo$-categories. We show that localizing these categories
at equivalences reflected from $\cat C$ induces a suitable model for
the $K$-theory of $\cat C$.

Section \ref{spheretheorem} comprises the proof of the main
theorem. We rely on tools developed in sections \ref{basicprops} and
\ref{cellfiltrations} as well as results of Barwick
\cite{barwick}. Finally, we prove our version of the ``$+ = Q$''
theorem: the $K$-theory of the heart of a weight structure admits a
description analogous to Quillen's plus construction.

Finally, in section \ref{examples}, we enumerate examples of weight
structures and explain applications of the main theorem in each
case. The Gillet--Waldhausen Theorem and the Resolution Theorem appear
as consequences of the main theorem. Several conjectural examples are
mentioned that merit exploration in future work.

\subsection*{Acknowledgments}
  I thank Ben Antieau, Clark Barwick, David Ben-Zvi, and David Gepner
  for helpful conversations about this paper. I especially thank
  Andrew Blumberg for leading me to this problem and providing
  guidance throughout its development.

\section{Towards Waldhausen $\oo$-categories}\label{background}
Algebraic $K$-theory is a functor which takes homotopical data as
input and produces a spectrum.

\begin{definition}[\cite{1126}*{1.2}]
  A \term{Waldhausen category} is a pointed category $\cat C$ equipped
  with subcategories, $w\cat C$ (the \term{weak equivalences}, denoted
  $\wto$) and $cof \cat C$ (the \term{cofibrations}, denoted $\cof$),
  satisfying the following axioms
  \begin{itemize}
  \item[(C1)] The isomorphisms of $\cat C$ are contained in $cof \cat C$.
  \item[(C2)] For every object $A\in \cat C$, the unique map $* \to A$
    is a cofibration.
  \item[(C3)]\label{axiomC3} For every diagram
    \begin{equation*}
      \begin{tikzcd}
        A & B \ar[l, tail] \ar[r] & C
      \end{tikzcd}
    \end{equation*}
    the pushout $A\cup_B C$ exists in $\cat C$ and $C\to A\cup_B C$ is
    a cofibration.
  \item[(W1)] The isomorphisms of $\cat C$ are contained in $w\cat C$.
  \item[(W2)]\label{axiomW2} For each commutative diagram
    \begin{equation*}
      \begin{tikzcd}
        A \ar[d, "\sim"] & B \ar[l, tail] \ar[r] \ar[d, "\sim"]& C
        \ar[d, "\sim"] \\
        A' & B' \ar[l, tail] \ar[r] & C'
      \end{tikzcd}
    \end{equation*}
    the induced map on pushouts $A\cup_B C\to A'\cup_{B'} C'$ is a
    weak equivalence.
  \end{itemize}
\end{definition}

Waldhausen categories encode homotopical information. Specifically,
they encode the data of which quotients (or pushouts) in the category
are homotopically meaningful. Axiom (C3) guarantees that
pushouts of cofibrations exist while axiom (W2) implies that
the weak equivalence type of those pushouts is not altered when the
diagram is replaced by a weakly equivalent one.

The algebraic $K$-theory of a Waldhausen category $\cat C$ is
constructed out of a simplicial object encoding information about a
sequences of cofibrations in $\cat C$ and their successive
quotients. This is Waldhausen's $S_\bullet$-construction, and the
algebraic $K$-theory of $\cat C$ is defined to be the space
\begin{equation*}
  K(\cat C) \coloneq \loops |w S_\bullet \cat C|
\end{equation*}
which deloops to form the connective $K$-theory spectrum which we will
also denote $K(\cat C)$ \cite{1126}*{1.3}.

In essence, the algebraic $K$-theory functor takes as input some
category with a notion of homotopy theory and information about the
homotopically-meaningful quotients in that category. The modern
perspective is that any model for a category equipped with a version
of homotopy theory (such as a model category or a Waldhausen category)
has an underlying $\oo$-category. In this paper, we make use of
Barwick's construction of algebraic $K$-theory \cite{barwick} for
$\oo$-categories.

Specifically, Barwick defines $\oo$-categorical analogs for Waldhausen
categories which are the input to his $K$-theory machine. These
Waldhausen $\oo$-categories are \term{pairs} of $\oo$-categories (\cf
\cite{barwick}*{Definition 1.11}) $(\cat C, \cat C_\dagger)$ subject
to several axioms. $\cat C_\dagger$ is a subcategory of $\cat C$ that
we will refer to as the \term{ingressive} morphisms of $\cat C$. The
ingressive morphisms are required to contain the maximal Kan
subcomplex $\iota \cat C$, and the \term{Waldhausen} structure on this
pair requires a zero object to exist in $\cat C$, all maps from zero
objects to be ingressive, and pushouts of ingressive morphisms to be
ingressive. Functors which respect Waldhausen $\oo$-categories are
called \term{exact} and are required to take zero objects to zero
objects, ingressives to ingressives, and pushouts squares of
ingressives to pushout squares. (For a precise definition, \cf
\cite{barwick}*{Definition 2.7}.) 

An $\oo$-category with the minimal pair structure
$\cat C_\dagger = \iota \cat C$ will be Waldhausen when it is a
contractible Kan complex. An $\oo$-category with the maximal pair
structure $\cat C_\dagger = \cat C$ will be Waldhausen when it has a
zero object and admits finite colimits. A stable $\oo$-category is
always a Waldhausen $\oo$-category via the maximal pair structure. The
relative nerve can produce a Waldhausen $\oo$-category from an
ordinary Waldhausen category by taking $N(\cat C, w\cat C)_\dagger$ to
be the smallest subcategory of $N(\cat C, w\cat C)$ that contains the
equivalences and the edges of $N\cat C$ corresponding to cofibrations
in $cof \cat C$. Barwick's $K$-theory of this associated Waldhausen
$\oo$-category agrees with Waldhausen's $K$-theory for the original
Waldhausen category.

In this paper, we are chiefly interested in the $K$-theory of a stable
$\oo$-category $\cat C$ and a specific subcategory $\cat C_{\heart w}$
which is not closed under suspension. $\cat C$ will have the maximal
pair structure where all maps are ingressives. $\cat C_{\heart w}$
will have the pair structure where maps are ingressives if they have
cofiber in $\cat C_{\heart w}$.

\section{Weight structures on stable $\oo$-categories}\label{basicprops}
In this section, we define weight structures and provide an overview
of their basic properties.

\subsection{Definitions}

\begin{definition}[{\cite{Bondarko}}]
  Let $\cat T$ be a triangulated category. A \emph{weight structure}
  $w$ on $\cat T$ is a pair of full subcategories $\cat T_{w\leq 0}$
  and $\cat T_{w \geq 0}$ (closed under retract and finite coproducts)
  satisfying the following properties. We adopt the notation that
  $\cat T_{w\leq n} \coloneq \Sigma^n \cat T_{w\leq 0}$ and $\cat
  T_{w\geq n} \coloneq \Sigma^n \cat T_{w\geq 0}$.
  \begin{enumerate}
  \item We have the inclusions $\cat T_{w\geq 1} \subseteq \cat
    T_{w\geq 0}$ and $\cat T_{w\leq -1} \subseteq \cat T_{w\leq 0}.$
  \item (Orthogonality) For $X \in \cat T_{w\leq 0}$ we have
    $\Hom_{\cat T}(X, Y) = 0$ for any $Y\in \cat T_{w \geq 1}$.
  \item (Weight decomposition) For any $X\in \cat T$ there exists a
    distinguished triangle
    \begin{equation*}
      \begin{tikzcd}
        X' \ar[r] & X \ar[r] & X''
      \end{tikzcd}
    \end{equation*} 
    with $X' \in \cat T_{w\leq 0}$ and $X''\in \cat T_{w\geq 1}$.
  \end{enumerate}
  
  We will call $\cat T_{\heart w} \coloneq \cat T_{w\leq 0}\cap \cat
  T_{w\geq 0}$ the \emph{heart} of the weight structure.
\end{definition}
We have chosen to use the \emph{homological} sign convention for weight
structures and all examples and statements will use that
convention. Some papers in the literature (\eg
\cite{Bondarko}) opt to use cohomological signs, writing
$\cat T^{w\geq 0}$ for what we denote by $\cat T_{w\leq 0}$. Due to a
lack of consensus in the literature, we use the convention that
appears more agreeable for homotopy theorists.

As defined, weight structures are overdetermined. Each subcategory of
the weight structure $\cat T_{w\leq 0}$ or $\cat T_{w\geq 0}$
determines the other by orthogonality (\cf proposition
\ref{mappingprop}). That is, $\cat T_{w\leq 0}$ is precisely the full
subcategory on objects $X$ with $\cat T(X,Y)=0$ for all
$Y\in \cat T_{w\geq 1}$. By translating (through suspension), we can
provide weight decompositions of an object $X$ at any degree. That is,
the degree-zero decomposition for $\Sigma^{-n} X$,
$A \to \Sigma^{-n} X \to B$ provides a degree-$n$-decomposition
$\Sigma^n A \to X \to \Sigma^n B$ with
$\Sigma^n A \in \cat T_{w\leq n}$ and
$\Sigma^n B \in \cat T_{w\geq n+1}$.

Note that by suspending and desuspending we can provide weight
decompositions of an object $X$ at any degree. That is, the
degree-zero decomposition for $\Sigma^{-n} X$,
$A \to \Sigma^{-n} X \to B$ provides a degree-$n$-decomposition
$\Sigma^n A \to X \to \Sigma^n B$ where
$\Sigma^n A \in \cat D_{w\leq n}$ and
$\Sigma^n B \in \cat C_{w\geq n+1}$.

The homotopy category of a stable $\oo$-category is triangulated
\cite{HA}*{1.1.2.15}.  Weight structures are defined on stable
$\oo$-categories by way of the homotopy category.
\begin{definition}
  Let $\cat C$ be a stable $\oo$-category. A \emph{weight structure}
  on $\cat C$ will be a weight structure on the triangulated category
  $h\cat C$.
\end{definition}

Full subcategories of $\oo$-categories are specified by full
subcategories of their homotopy categories. A weight structure on a
stable $\oo$-category $\cat C$ is equivalently defined by two full
$\oo$-subcategories $\cat C_{w \leq 0}$ and $\cat C_{w \geq 0}$ (with
$\cat C_{w\geq n}$ and $\cat C_{w\leq n}$ defined as above) and
(co)fiber sequences for each object
\begin{equation*}
  \begin{tikzcd}
    A \ar[r] \ar[d] & X \ar[d] \ar[r] & * \ar[d] \\ * \ar[r] & B
    \ar[r] & \Sigma A
  \end{tikzcd}
\end{equation*}
where both squares are pushouts in $\cat C$ and
$A\in \cat C_{w\leq n}$ and $B\in \cat C_{w \geq n+1}$. Furthermore,
the orthogonality condition requires that $\pi_0 \cat C(A,B) = 0$ for
all $A\in \cat C_{w \leq n}$ and $B\in\cat C_{w\geq n+1}$. We will
further abuse notation by referring to
$\cat C_{\heart w} \coloneq \cat C_{w\leq 0}\cap \cat C_{w\geq 0}$ as
the heart of the weight structure on $\cat C$.
\begin{remark}
  Throughout this paper, $\cat C_{\heart w}$, the heart of a weight
  structure on a stable $\oo$-category, is defined to be a full
  $\oo$-subcategory of $\cat C$. This conflicts with the notation used
  in triangulated categories, where the heart of the weight structure
  is the subcategory of the homotopy category of $\cat C$. The latter
  we consider $\pi_0 \cat C_{\heart w}$.
\end{remark}

\begin{definition}
  We say that a weight structure $w$ on a triangulated category
  $\cat T$ is \emph{non-degenerate} if
  $\bigcap_{n \to \oo} \cat T_{w\geq n} = 0$ and
  $\bigcap_{n\to -\oo} \cat T_{w\leq n} = 0$.
\end{definition}
A weight structure $w$ on a stable $\oo$-category $\cat C$ is
non-degenerate precisely when $\bigcap_{n \to \oo} \cat C_{w\geq n}$
and $\bigcap_{n\to -\oo} \cat C_{w\leq n}$ are equivalent to the
subcategory of zero objects $\cat C_{0}$ in $\cat C$,

Throughout the paper, we will view the stable $\oo$-category $\cat C$
as a Waldhausen $\oo$-category equipped with the maximal pair
structure in which all edges are ingressive. The heart
$\cat C_{\heart w}$ is given a sub-Waldhausen structure: an edge is
ingressive only when its cofiber also lives in $\cat C_{\heart w}$.

Several examples of weight structures are worked out in section
\ref{examples}. We mention the following example for the reader to
keep in their mind.
\begin{example}
  The \emph{Postnikov weight structure} on finite spectra takes
  \begin{equation*}
    \Sp_{w\geq
      n} = \{E : \pi_*(E) = 0, \forall *<n\}
  \end{equation*}
  that is, the $n$-connective spectra, and
  \begin{equation*}
    \Sp_{w\leq n} = \{E :
    \HZ_*(E) = 0, \forall * > n \textrm{~and~} \HZ_n(E)
    \textrm{~free}\}.
  \end{equation*}
  The heart $\Sp_{\heart w}$ consists spectra weakly equivalent to
  finite wedge sums of copies of $S^0$.
\end{example}

Weight structures generalize cellular structures. $\cat C_{w\leq n}$
is the subcategory of ``cell-less-than-$n$'' objects and $\cat
C_{w\geq n}$ is the ``cell-greater-than-$n$'' subcategory. The weight
decomposition is analogous to the inclusion of an $n$-skeleton into
$X$.

Weight structures are not formally dual to $t$-structures. The
decompositions arising from $t$-structures are unique as can be
summarized by the existence of a localizing ``truncation'' functor
$\cat C \to \cat C_{t\geq n}$ for all $n$. In contrast, weight
structure decompositions have no such unicity: there can be many
choices of $n$-skeleta for each object. Instead for weight
decompositions, we have the following result in the homotopy category
$h\cat C$.
\begin{proposition}
  Let $A_{w\leq n} \to X \to B_{w\geq n+1}$ denote a weight
  decomposition of $X$ at degree $n$. If $n\leq m$ then we get maps
  \begin{equation*}
    \begin{tikzcd}
      A_{w\leq n} \ar[r] \ar[d, "\exists", dotted] & X \ar[r] \ar[d,
      equals] & B_{w\geq n+1} \ar[d, "\exists", dotted] \\
      A'_{w\leq m} \ar[r] & X \ar[r] & B'_{w\leq m}
    \end{tikzcd}
  \end{equation*}
  making the diagram commute. If $n < m$ then the induced maps are
  unique.
\end{proposition}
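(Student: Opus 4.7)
The plan is to combine the orthogonality axiom of the weight structure with the standard morphism-extension property of distinguished triangles in $h\cat C$. For existence of the left-hand vertical map: since $n \leq m$, we have $A_{w\leq n} \in \cat T_{w\leq m}$, so the composite $A_{w\leq n} \to X \to B'_{w\geq m+1}$ lies in $\Hom(\cat T_{w\leq m}, \cat T_{w\geq m+1})$, which vanishes by orthogonality. Hence this composite is zero, and applying the long exact sequence obtained from the distinguished triangle $A'_{w\leq m} \to X \to B'_{w\geq m+1}$ produces a lift $A_{w\leq n} \to A'_{w\leq m}$ making the left-hand square commute. Given this commutative square with the identity on $X$, axiom (TR3) of the triangulated category $h\cat C$ extends it to a morphism of distinguished triangles, supplying the right-hand vertical map $B_{w\geq n+1} \to B'_{w\geq m+1}$.

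For uniqueness when $n < m$: suppose $f, g : A_{w\leq n} \to A'_{w\leq m}$ both make the left square commute. Then $(A'_{w\leq m} \to X) \circ (f - g) = 0$, so by rotating the triangle to $\Sigma^{-1} B'_{w\geq m+1} \to A'_{w\leq m} \to X$, we see that $f - g$ factors through $\Sigma^{-1} B'_{w\geq m+1} \in \cat T_{w\geq m}$. Since $n < m$, orthogonality forces $\Hom(A_{w\leq n}, \Sigma^{-1} B'_{w\geq m+1}) = 0$, and hence $f = g$. A symmetric argument handles uniqueness of the right-hand map: the difference of two candidate maps $B_{w\geq n+1} \to B'_{w\geq m+1}$ precomposed with $X \to B_{w\geq n+1}$ vanishes, so the difference factors through $\Sigma A_{w\leq n} \in \cat T_{w\leq n+1}$, and orthogonality against $B'_{w\geq m+1}$ (applicable precisely because $n+1 \leq m$) kills the difference.

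The main thing to manage is the bookkeeping on shifts: existence only needs the weak inequality $n \leq m$ to deploy orthogonality between $\cat T_{w\leq m}$ and $\cat T_{w\geq m+1}$, whereas both halves of uniqueness require the strict inequality $n < m$ so that the shifted piece $\Sigma^{-1} B'$ (respectively $\Sigma A$) still lies in the weight region orthogonal to $A$ (respectively $B'$). Beyond this careful indexing, there is no substantive obstacle, as the triangulated-category axioms and orthogonality do all the work.
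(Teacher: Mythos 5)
Your argument is correct and is essentially the paper's own: this statement reappears as Corollary \ref{cor1}, which is proved by applying Lemma \ref{lem1} (orthogonality vanishing, the long exact sequence of $h\cat C(-,-)$ on one of the triangles, and TR3 to complete the map of triangles), and your proposal simply inlines that lemma's proof with the same weight bookkeeping for existence ($n\leq m$) versus uniqueness ($n<m$).
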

As a consequence of this proposition, maps between skeleta are
determined up to homotopy provided they map into a strictly ``higher''
skeleton.

\begin{remark}
  As a subcategory of $C$, $\cat C_{w \leq n}$ is closed under forming
  fibers. Likewise, $\cat C_{w\geq n}$ is closed under forming
  cofibers. If we consider $\cat C$ as a maximal exact $\oo$-category
  in the sense of \cite{barwickheart}, $\cat C_{w\geq n}$ is a
  Waldhausen $\oo$-subcategory and $\cat C_{w\leq n}$ is a
  coWaldhausen $\oo$-subcategory.
\end{remark}

\begin{definition}
  The weight structure on $\cat C$ is \emph{bounded} if
  \begin{equation*}
    \bigcup_{n\geq 0} \left( \cat C_{w\geq -n} \cap \cat C_{w\leq n}
    \right) = \cat C.
  \end{equation*}
\end{definition}

The weight structure defined above on the category of finite spectra
is bounded. The same weight structure on the category of (not
necessarily finite) spectra is not bounded but its ``bounded closure''
$\bigcup_{n\geq 0} \left( \cat C_{w\geq -n} \cap \cat C_{w\leq n}
\right)$ consists of the finite spectra.

\subsection{Properties of weight structures}
In this section we establish some basic properties of weight
structures and how weights interact with forming fibers and
cofibers. The punchline of the section is that a weight structure
provides cellular decompositions of objects.

When convenient, we will will use subscripts to denote the weights of given
objects. That is, $A_{w\leq n}$ will denote that the object
$A$ has weight $w\leq n$ in $\cat C$.

\begin{proposition}\label{mappingprop} %
  $\cat C_{w\leq n}$ determines $\cat C_{w\geq n+1}$ and vice-versa:
  $\cat C_{w\geq n+1}$ is precisely those $X \in \cat C$ with
  $\Hom(Y,X)=0$ for all $Y\in \cat C_{w\leq n}$.
\end{proposition}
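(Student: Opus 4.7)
The plan is to reduce to $n = 0$ by translation, observe that orthogonality gives one inclusion, and use the weight decomposition together with a standard splitting lemma in the triangulated category $h\cat C$ to obtain the reverse inclusion.

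First I would translate. Because $\cat C_{w\leq n} = \Sigma^n \cat C_{w\leq 0}$ and $\cat C_{w\geq n+1} = \Sigma^n \cat C_{w\geq 1}$ and $\Sigma$ is an autoequivalence of $h\cat C$, the statement for general $n$ follows from the case $n=0$. So I want to prove that $\cat C_{w\geq 1}$ coincides with the full subcategory $R \coloneq \{X\in \cat C : \Hom(Y,X)=0 \text{ for all } Y \in \cat C_{w\leq 0}\}$. The inclusion $\cat C_{w\geq 1} \subseteq R$ is exactly the orthogonality axiom in the definition of a weight structure.

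For the reverse inclusion, take $X \in R$ and apply the weight decomposition axiom at degree $0$ to obtain a distinguished triangle
\begin{equation*}
  X' \to X \xrightarrow{g} X'' \to \Sigma X'
\end{equation*}
in $h\cat C$ with $X' \in \cat C_{w\leq 0}$ and $X'' \in \cat C_{w\geq 1}$. Since $X \in R$, the map $X' \to X$ is zero. Applying the cohomological functor $\Hom(-,X)$ to the triangle produces an exact sequence
\begin{equation*}
  \Hom(X'', X) \xrightarrow{g^*} \Hom(X,X) \xrightarrow{0} \Hom(X', X),
\end{equation*}
so $\id_X$ lifts to some $s\colon X'' \to X$ with $g \comp s = \id_X$. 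This exhibits $X$ as a retract of $X'' \in \cat C_{w\geq 1}$. Since $\cat C_{w\geq 1}$ is closed under retracts by the definition of a weight structure, $X \in \cat C_{w\geq 1}$, establishing $R \subseteq \cat C_{w\geq 1}$.

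The ``vice versa'' direction is entirely dual: for $Y$ satisfying $\Hom(Y,X)=0$ for every $X\in \cat C_{w\geq 1}$, apply the degree-$0$ weight decomposition to get $Y' \to Y \to Y''$ with $Y' \in \cat C_{w\leq 0}$ and $Y'' \in \cat C_{w\geq 1}$; the map $Y\to Y''$ is now zero, the rotated triangle splits in the same way to exhibit $Y$ as a retract of $Y'$, and retract closure of $\cat C_{w\leq 0}$ places $Y$ there. The main (very minor) obstacle is the splitting argument itself, which uses nothing more than applying $\Hom(-,X)$ to a triangle whose first map is zero; it is worth emphasizing that the final conclusion really does rely on the retract closure axiom built into the definition of a weight structure, as otherwise the argument would only produce $X$ as a summand of an object of $\cat C_{w\geq 1}$ rather than an object of $\cat C_{w\geq 1}$ itself.
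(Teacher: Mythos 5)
Your proof is correct and follows essentially the same route as the paper: apply the weight decomposition to $X$, note the first map is zero by hypothesis, and conclude via orthogonality the other way. In fact your version is slightly more careful than the paper's, which asserts $X \simeq X''$ outright where the triangle with zero first map only gives $X'' \simeq X \oplus \Sigma X'$, so your explicit appeal to retract-closure of $\cat C_{w\geq 1}$ is exactly the right way to finish the argument.
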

\begin{proof}
  If $X$ is as given in the statement, then it's weight decomposition
  at $n$ is a fiber sequence
  \begin{equation*}
    \begin{tikzcd}
      X' \ar[r] & X \ar[r] & X''
    \end{tikzcd}
  \end{equation*}
  but $X' \in \cat C_{w \leq n}$ so $X' \to X$ is the zero map in
  $h\cat C$. Thus $X \simeq X''$. The proof is identical to show that
  an object lives in $\cat C_{w\geq n+1}$.
\end{proof}

\begin{proposition}\label{PropGeqCofibers}
  $\cat C_{w\geq n}$ is closed under retracts and cofibers.
\end{proposition}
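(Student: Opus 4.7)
The plan is to reduce everything to the orthogonality characterization supplied by Proposition \ref{mappingprop}: namely, $X \in \cat C_{w \geq n}$ if and only if $\Hom_{h\cat C}(Y, X) = 0$ for every $Y \in \cat C_{w \leq n-1}$. Granting this, both closure properties become exact-sequence chases in the homotopy category.

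For closure under retracts, this is essentially built into the definition of a weight structure (both halves are assumed retract-closed at level $0$, and $\cat C_{w \geq n} = \Sigma^n \cat C_{w \geq 0}$ since suspension is an equivalence). Alternatively, if $X$ is a retract of $X' \in \cat C_{w\geq n}$ with retraction $X \to X' \to X$, then any map $Y \to X$ with $Y \in \cat C_{w \leq n-1}$ factors through $X' \to X$, and the composite $Y \to X'$ is null by orthogonality, so $Y \to X$ is null.

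For closure under cofibers, suppose $A \to B \to C$ is a cofiber sequence in $\cat C$ with $A, B \in \cat C_{w \geq n}$. I want to show $C \in \cat C_{w \geq n}$, so by Proposition \ref{mappingprop} it suffices to check $\Hom_{h\cat C}(Y, C) = 0$ for every $Y \in \cat C_{w \leq n-1}$. Since $\cat C$ is stable, the cofiber sequence is also a fiber sequence, hence applying $\Hom_{h\cat C}(Y, -)$ produces a long exact sequence, containing
\begin{equation*}
  \Hom_{h\cat C}(Y, B) \longrightarrow \Hom_{h\cat C}(Y, C) \longrightarrow \Hom_{h\cat C}(Y, \Sigma A).
\end{equation*}
By orthogonality the left term vanishes. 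For the right term, note that $\Sigma A \in \cat C_{w \geq n+1} \subseteq \cat C_{w \geq n}$ (the inclusion coming from axiom (1) of the weight structure, shifted by $n$), so orthogonality again kills $\Hom_{h\cat C}(Y, \Sigma A)$. Therefore $\Hom_{h\cat C}(Y, C) = 0$ as required, and Proposition \ref{mappingprop} yields $C \in \cat C_{w \geq n}$.

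There is no serious obstacle here; the only subtle point is remembering that $\cat C_{w \geq n+1} \subseteq \cat C_{w \geq n}$ so that the shifted object $\Sigma A$ still lies in the orthogonal complement of $\cat C_{w \leq n-1}$. The corresponding statement for $\cat C_{w \leq n}$ being closed under fibers proceeds by the dual argument, which is implicit in the remark preceding this proposition.
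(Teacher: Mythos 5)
Your proof is correct and follows the same overall strategy as the paper's: reduce membership in $\cat C_{w\geq n}$ to the orthogonality criterion of Proposition~\ref{mappingprop}, then chase the exact sequence obtained by mapping into the rotated (co)fiber sequence. In fact your version uses the correct variance: to certify $C \in \cat C_{w\geq n}$ one must test maps \emph{into} $C$ from objects of $\cat C_{w\leq n-1}$, i.e.\ apply $h\cat C(Y,-)$, exactly as you do, and the vanishing of the outer terms $h\cat C(Y,B)$ and $h\cat C(Y,\Sigma A)$ follows from orthogonality together with $\Sigma A\in\cat C_{w\geq n+1}\subseteq\cat C_{w\geq n}$. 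The paper's printed proof instead applies $h\cat C(-,Z)$ for $Z\in\cat C_{w\geq n+1}$ and, in the retract step, assumes the ambient object lies in $\cat C_{w\leq n}$; that is the argument for the dual statement ($\cat C_{w\leq n}$ closed under retracts and fibers), and the two proofs appear to have been interchanged in the text. Your alternative retract argument---retract-closure of $\cat C_{w\geq 0}$ is part of the definition and is transported along the autoequivalence $\Sigma^n$---is also valid and is arguably the most direct route.
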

\begin{proof}
  Say $X$ is a retract of $Y$ in $\cat C$ and $Y$ lies in $\cat
  C_{w\leq n}$. Then it will be as well in $h\cat C$. In particular,
  fix $i:X\to Y$ and $r:Y\to X$ with $r\comp i = \id_X$ in $h\cat
  C$. For any $Z \in \cat C_{w\geq n+1}$ we have induced maps
  \begin{equation*}
    \begin{tikzcd}
      h\cat C(X,Z) \ar[r, "r^*"] & h\cat C(Y,Z) \ar[r, "i^*"] & h\cat
      C(X,Z)
    \end{tikzcd}
  \end{equation*}
  whose composite must be the identity. This demonstrates $h\cat C(X,Z)$
  as a retract of $h\cat C(Y,Z)$. The latter is trivial so the former
  must be as well. The previous proposition concludes that $\cat
  C_{w\geq n}$ is closed under retracts.

  Now suppose $X$ and $Y$ both live in $\cat C_{w\geq n}$ and
  \begin{equation*}
    \begin{tikzcd}
      X \ar[r, "f"] & Y \ar[r] & \cofiber(f)
    \end{tikzcd}
  \end{equation*}
  is a cofiber sequence in $\cat C$. Rotating forward, we have a
  cofiber sequence $Y \to \cofiber(f) \to \Sigma X$. Let $Z$ be any
  object of $\cat C_{w\geq n+1}$. Since $h\cat C(-,Z)$ carries cofiber
  sequences to fiber sequences, we have the following fiber sequence.
  \begin{equation*}
    \begin{tikzcd}
      h\cat C(\Sigma X, Z) \ar[r] & h\cat C(\cofiber(f),Z) \ar[r] &
      h\cat C(Y, Z)
    \end{tikzcd}
  \end{equation*}
  The axioms for a weight structure tell us that
  $\Sigma X \in \cat C_{w\geq n+1} \subseteq \cat C_{w\geq n}$ and
  thus all terms of this sequence must be trivial as the outer two
  are. The previous proposition concludes the proof.
\end{proof}

\begin{proposition}
  $\cat C_{w\leq n}$ is closed under retracts and forming fibers.
\end{proposition}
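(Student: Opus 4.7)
The plan is to mirror the proof of Proposition \ref{PropGeqCofibers} by duality, using the contravariant direction of the mapping-space functor. Throughout, I would invoke the evident companion to Proposition \ref{mappingprop}: $\cat C_{w\leq n}$ consists precisely of those $X\in\cat C$ with $h\cat C(X,Z)=0$ for all $Z\in\cat C_{w\geq n+1}$. (The stated proof of \ref{mappingprop} remarks that this direction is identical.)

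For the retract half, suppose $X$ is a retract of $Y\in\cat C_{w\leq n}$ with maps $i\colon X\to Y$ and $r\colon Y\to X$ in $h\cat C$ satisfying $r\comp i=\id_X$. For any $Z\in\cat C_{w\geq n+1}$, applying the covariant functor $h\cat C(-,Z)$ yields
\begin{equation*}
\begin{tikzcd}
h\cat C(X,Z) \ar[r,"i^*"] & h\cat C(Y,Z) \ar[r,"r^*"] & h\cat C(X,Z)
\end{tikzcd}
\end{equation*}
whose composite is the identity, exhibiting $h\cat C(X,Z)$ as a retract of the trivial group $h\cat C(Y,Z)$. Hence $h\cat C(X,Z)=0$, and the dual form of Proposition \ref{mappingprop} places $X$ in $\cat C_{w\leq n}$.

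For the fiber half, let $f\colon X\to Y$ be a map between objects of $\cat C_{w\leq n}$ and consider its fiber sequence $\fiber(f)\to X\to Y$. Rotating backward produces the cofiber sequence $\Sigma^{-1} Y \to \fiber(f) \to X$. For any $Z\in\cat C_{w\geq n+1}$, applying $h\cat C(-,Z)$ carries this to the fiber sequence
\begin{equation*}
\begin{tikzcd}
h\cat C(X,Z) \ar[r] & h\cat C(\fiber(f),Z) \ar[r] & h\cat C(\Sigma^{-1}Y,Z).
\end{tikzcd}
\end{equation*}
The axiom $\cat C_{w\leq -1}\subseteq \cat C_{w\leq 0}$, translated by $\Sigma^n$, yields $\Sigma^{-1}Y\in\cat C_{w\leq n-1}\subseteq\cat C_{w\leq n}$, so the two outer terms vanish by orthogonality and therefore the middle term does as well. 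The dual form of \ref{mappingprop} again concludes that $\fiber(f)\in\cat C_{w\leq n}$.

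There is no real obstacle beyond bookkeeping: the only subtlety is matching variance and sign conventions when rotating the triangle so that $\cat C_{w\leq n}$'s closure under desuspension (rather than suspension) is what makes the argument go through. Once the analog of Proposition \ref{mappingprop} is in hand, both parts are formally dual to the previous proposition.
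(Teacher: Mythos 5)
Your argument is correct and is exactly the dualization the paper itself performs: you characterize $\cat C_{w\leq n}$ by the vanishing of $h\cat C(-,Z)$ for all $Z\in\cat C_{w\geq n+1}$, run the standard retract argument, and rotate the fiber sequence backward before applying the mapping functor, just as the paper's one-line proof indicates. The only blemishes are cosmetic: $h\cat C(-,Z)$ is contravariant rather than covariant, so the arrows in your retract diagram should be labeled $r^*$ and then $i^*$ (making the composite $(r\comp i)^*=\id$).
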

\begin{proof}
  Both proofs are nearly identical to those for the previous
  proposition when one replaces $h\cat C(-,Z)$ with $h\cat
  C(Z,-)$. $h\cat C(Z,-)$ carries fiber sequences to fiber sequences,
  and since $\cat C$ is stable, fiber and cofiber sequences
  coincide. Backing up a fiber sequence $\fiber(f) \to X \to Y$ to
  produce $\Sigma^{-1} Y \to \fiber(f) \to X$ and similar arguments
  conclude the proof.
\end{proof}

\begin{remark}
  $\cat C_{w\geq n} \subset \cat C$ are Waldhausen subcategories of
  $\cat C$. If $\cat C$ is an exact stable $\oo$-category in the sense
  of \cite{barwickheart} with a weight structure then $\cat
  C_{w\leq n} \subset \cat C$ are coWaldhausen subcategories.
\end{remark}

The following is a lemma about triangulated categories that is
surprisingly useful for manipulating weight structures.
\begin{lemma}[{\cite{MR2746283}*{1.4.1}}]\label{lem1}
  Let $X\to A \to B \to \Sigma X$ and $X'\to A'\to B'\to \Sigma X$ be
  two distinguished triangles in a triangulated category $h\cat C$.
  \begin{enumerate}
  \item If $h\cat C(B, \Sigma A') = 0$ then for any $g:X\to X'$ there
    exist $h:A\to A'$ and $i:B\to B'$ completing $g$ to a map of
    distinguished triangles.
  \item If furthermore $h\cat C(B, A')=0$ then $h$ and $i$ are unique.
  \end{enumerate}
\end{lemma}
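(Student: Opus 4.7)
The plan is to prove both parts by applying $h\cat C(-, A')$ and $h\cat C(B, -)$ to the two distinguished triangles and reading off the needed information from the resulting long exact sequences together with one application of the morphism axiom (TR3) of triangulated categories.

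For existence, I first construct $h \colon A \to A'$. Denote the first triangle by $X \xrightarrow{f} A \xrightarrow{p} B \xrightarrow{\partial} \Sigma X$ and the second by $X' \xrightarrow{f'} A' \xrightarrow{p'} B' \xrightarrow{\partial'} \Sigma X'$. Applying the contravariant functor $h\cat C(-, A')$ to the first triangle yields the long exact sequence
\begin{equation*}
h\cat C(B, A') \to h\cat C(A, A') \xrightarrow{f^*} h\cat C(X, A') \to h\cat C(\Sigma^{-1} B, A') \isom h\cat C(B, \Sigma A').
\end{equation*}
The hypothesis $h\cat C(B, \Sigma A') = 0$ makes $f^*$ surjective, so the element $f' \comp g \in h\cat C(X, A')$ admits a preimage $h \colon A \to A'$, which by construction makes the left square commute. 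Once this square commutes, TR3 supplies an $i \colon B \to B'$ completing $(g, h)$ to a morphism of distinguished triangles. This establishes part (1).

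For part (2), I use the stronger hypothesis $h\cat C(B, A') = 0$ to extract uniqueness in two steps. Uniqueness of $h$ follows immediately from the same long exact sequence above: the term $h\cat C(B, A')$ now vanishes, so $f^*$ is injective and $h$ is determined by $f' \comp g$. For uniqueness of $i$, I apply the covariant functor $h\cat C(B, -)$ to the second triangle to obtain the exact sequence
\begin{equation*}
h\cat C(B, A') \xrightarrow{p'_*} h\cat C(B, B') \xrightarrow{\partial'_*} h\cat C(B, \Sigma X').
\end{equation*}
Any $i$ completing $(g, h)$ to a morphism of triangles satisfies $\partial'_* (i) = \Sigma g \comp \partial$, a fixed element of $h\cat C(B, \Sigma X')$. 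Thus $i$ is determined modulo the kernel of $\partial'_*$, which equals the image of $p'_*$; but $h\cat C(B, A') = 0$ by hypothesis, so this kernel vanishes and $i$ is unique.

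The argument is essentially formal manipulation of long exact sequences together with one invocation of TR3, so there is no serious obstacle; the only thing to keep straight is which triangle each Hom-functor is applied to and which degree of vanishing is needed at each step. The condition $h\cat C(B, \Sigma A') = 0$ handles the existence of $h$, while the stronger condition $h\cat C(B, A') = 0$ is used twice, once to pin down $h$ and once to pin down $i$.
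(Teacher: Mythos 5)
Your proof is correct and is essentially the paper's argument run in mirror image: the paper applies $h\cat C(B,-)$ to the second triangle to lift $\Sigma g \comp \partial$ and produce $i$ first, then invokes the triangulated axioms for $h$, whereas you apply $h\cat C(-,A')$ to the first triangle to produce $h$ first and then use TR3 for $i$; in both cases the two hypotheses are exactly the vanishing of the two terms flanking the relevant Hom-group in the long exact sequence. If anything your write-up is slightly more complete on part (2), since you verify uniqueness of $h$ and of $i$ separately, while the paper only records the argument for $i$.
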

\begin{proof}
  By the axioms for a triangulated category it suffices to provide one
  of the two desired maps. Applying $h\cat C(B,-)$ to the second
  distinguished triangle yields the following exact sequence.
  \begin{equation*}
    \begin{tikzcd}
      h\cat C(B,A') \ar[r] & h\cat C(B,B') \ar[r] & h\cat C(B, \Sigma
      X') \ar[r] & h\cat C(B, \Sigma A')
    \end{tikzcd}
  \end{equation*}
  The assumption $h\cat C(B,\Sigma A')=0$ lets us lift the composite
  $B\to \Sigma X \xrightarrow{\Sigma g}{} \Sigma X'$ to $i :B\to
  B'$. If the second assumption holds this map is determined uniquely
  by $g$.
\end{proof}

Now let $A_{w\leq n} \to X \to B_{w\geq n+1}$ and
$A'_{w\leq m} \to X \to B_{w\geq m+1}$ denote two weight decomposition
of $X$ at degrees $n$ and $m$, respectively.
\begin{corollary}\label{cor1}
  There are maps $a:A_{w \leq n} \to A'_{w \leq m}$ and $b:B_{w \geq
    n+1} \to B'_{w\geq m+1}$ that assemble into a map of distinguished
  triangles
  \begin{equation*}
    \begin{tikzcd}
      A_{w\leq n} \ar[r] \ar[d, "a"] & X \ar[d, equals] \ar[r] & B'_{w\geq n+1} \ar[d, "b"] \\
      A'_{w\leq m} \ar[r] & X \ar[r] & B'_{w\geq m+1}
    \end{tikzcd}
  \end{equation*}
  whenever $m \geq n$. If $m \geq n+1$ then these maps are unique in
  $h\cat C$.
\end{corollary}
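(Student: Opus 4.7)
The plan is to reduce the corollary directly to Lemma~\ref{lem1} applied to suitably rotated versions of the two weight decomposition triangles. Since the shared vertex is $X$ sitting in the \emph{middle} of each decomposition triangle, but Lemma~\ref{lem1} extends a map given on the \emph{leftmost} vertex, I first rotate each triangle once so that $X$ appears on the left. The rotated distinguished triangles become
\begin{equation*}
X \to B_{w\geq n+1} \to \Sigma A_{w\leq n} \to \Sigma X
\qquad \text{and} \qquad
X \to B'_{w\geq m+1} \to \Sigma A'_{w\leq m} \to \Sigma X,
\end{equation*}
and I apply Lemma~\ref{lem1} with $g = \id_X$.

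For existence, the first hypothesis of Lemma~\ref{lem1} becomes $h\cat C(\Sigma A_{w\leq n}, \Sigma B'_{w\geq m+1}) = 0$, which by the suspension equivalence on $h\cat C$ agrees with $h\cat C(A_{w\leq n}, B'_{w\geq m+1})$. Since $m \geq n$, the inclusion $\cat C_{w\geq m+1} \subseteq \cat C_{w\geq n+1}$ places $B'_{w\geq m+1}$ in $\cat C_{w\geq n+1}$, so the orthogonality axiom of the weight structure forces this Hom group to vanish. Lemma~\ref{lem1} then supplies maps $b \colon B_{w\geq n+1} \to B'_{w\geq m+1}$ and $\Sigma a \colon \Sigma A_{w\leq n} \to \Sigma A'_{w\leq m}$ extending $\id_X$ to a morphism of rotated triangles; desuspending $\Sigma a$ via the equivalence $\Sigma \colon h\cat C \to h\cat C$ and rotating the diagram back produces the diagram of the corollary.

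For uniqueness under the stronger hypothesis $m \geq n+1$, I verify the second hypothesis of Lemma~\ref{lem1}, namely $h\cat C(\Sigma A_{w\leq n}, B'_{w\geq m+1}) = 0$. Here $\Sigma A_{w\leq n} \in \cat C_{w\leq n+1}$ and $B'_{w\geq m+1} \in \cat C_{w\geq m+1}$, so orthogonality delivers the required vanishing exactly when $m+1 \geq (n+1) + 1$, i.e., when $m \geq n+1$.

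There is no real obstacle here; the whole argument is bookkeeping of shift indices so that the orthogonality axiom supplies precisely the vanishing hypotheses required by Lemma~\ref{lem1}. The only point deserving modest attention is the rotation-and-desuspension conversion that translates Lemma~\ref{lem1}---which is stated in terms of a given map on leftmost vertices---into the present setting, where the identity on $X$ sits in the middle of both triangles.
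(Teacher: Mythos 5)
Your proposal is correct and is essentially the paper's argument: the paper's one-line proof ("apply Lemma \ref{lem1} to the sequences provided with $\id_X$") implicitly requires exactly the rotation you carry out, and your index bookkeeping verifying $h\cat C(\Sigma A_{w\leq n},\Sigma B'_{w\geq m+1})=0$ for $m\geq n$ and $h\cat C(\Sigma A_{w\leq n},B'_{w\geq m+1})=0$ for $m\geq n+1$ via orthogonality is precisely what the paper leaves to the reader. No gaps.
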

\begin{proof}
  Apply Lemma \ref{lem1} to the sequences provided with the map
  $\id:X\to X$.
\end{proof}
Note that the maps are unique up to choice of the two decompositions
(which are not \emph{a priori} unique).

\begin{corollary}
  If $X$ has weight $w\geq n$ then for all $k \geq n-1$ any weight
  decomposition
  \begin{equation*}
    \begin{tikzcd}
      A_{w\leq k} \ar[r] & X_{w\geq n} \ar[r] & B_{w\geq k+1}
    \end{tikzcd}
  \end{equation*}
  is equivalent to the trivial decomposition
  \begin{equation*}
    \begin{tikzcd}
      * \ar[r] & X_{w\geq n} \ar[r] & X_{w\geq n}
    \end{tikzcd}
  \end{equation*}
\end{corollary}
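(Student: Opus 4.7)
The plan is to derive this statement as a direct specialization of Corollary \ref{cor1}. The key observation is that the trivial fiber sequence $* \to X \xrightarrow{\id_X} X$ is itself a bona fide weight decomposition of $X$, specifically at degree $n-1$: indeed $* \in \cat C_{w\leq n-1}$ trivially, and the hypothesis $X \in \cat C_{w\geq n} = \cat C_{w\geq (n-1)+1}$ supplies the other condition required for a degree-$(n-1)$ weight decomposition.

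With this identification, the inequality $k \geq n-1$ is exactly the numerical hypothesis needed to invoke Corollary \ref{cor1} with the trivial decomposition (at degree $n-1$) playing the role of the ``lower'' decomposition and the given decomposition (at degree $k$) playing the role of the ``higher'' one. Applying that corollary with $g = \id_X$ on the common middle vertex produces comparison maps $a: * \to A$ and $b: X \to B$ that assemble into a map of distinguished triangles in $h\cat C$, exhibiting the given decomposition as equivalent to the trivial one. When $k \geq n$, so that the inequality $k \geq (n-1)+1$ is strict, the uniqueness clause of Corollary \ref{cor1} further pins this comparison down canonically in the homotopy category.

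The main content of the proof is therefore the recognition that the hypothesis $X \in \cat C_{w\geq n}$ is precisely what certifies $* \to X \to X$ as a degree-$(n-1)$ weight decomposition in the first place; from there Corollary \ref{cor1} supplies the equivalence without further manipulation. The only subtle point is the boundary case $k = n-1$, where Corollary \ref{cor1} still provides the comparison map of triangles but not its uniqueness statement — consistent with the general non-canonicity of weight decompositions at a single fixed degree noted in the remark following Corollary \ref{cor1}.
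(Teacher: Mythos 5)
Your identification of $*\to X\xrightarrow{\id}X$ as a weight decomposition of $X$ at degree $n-1$ is fine, and Corollary \ref{cor1} does produce maps $a\colon *\to A$ and $b\colon X\to B$ completing $\id_X$ to a map of distinguished triangles. The problem is the final step: asserting that these comparison maps ``exhibit the given decomposition as equivalent to the trivial one.'' A map of distinguished triangles extending $\id_X$ is an equivalence of decompositions only if $a$ and $b$ are isomorphisms in $h\cat C$, i.e.\ only if $A\simeq *$ and $b$ is invertible --- which is precisely the assertion to be proved. Worse, a map of the kind you produce exists for purely formal reasons for \emph{any} triangle $A\to X\to B\to \Sigma A$: the triple consisting of the zero map $*\to A$, the identity of $X$, and the structure map $X\to B$ is always a map of triangles out of $*\to X\to X$, since the only square needing verification involves the composite $X\to B\to\Sigma A$ of two consecutive maps of the triangle, which vanishes. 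So the existence of your comparison map uses no information about weights, and nothing in your argument forces $a$ or $b$ to be invertible; the uniqueness clause of Corollary \ref{cor1} (when $k\geq n$) pins the map down but still does not make it an isomorphism.

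The paper's proof is organized differently and supplies exactly the missing ingredient: it first places $X$ itself in $\cat C_{w\geq k+1}$, so that $*\to X\to X$ is a weight decomposition at the \emph{same} degree as the given one --- this containment, not the mere inequality $k\geq n-1$, is where the numerical hypothesis does real work --- and then Lemma \ref{lem1} yields comparison maps in \emph{both} directions; a uniqueness argument (trivial for maps into and out of $*$) identifies the composites with identities, and only from this two-sided comparison does one conclude $A\simeq *$ and $B\simeq X$. To repair your argument you would need at minimum the reverse comparison, from the given decomposition back to the trivial one, together with an argument that both composites are the identity; a single one-directional map of triangles over $\id_X$ establishes nothing.
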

\begin{proof}
  Since $k+1 \geq n$, $X$ lies in $\cat C_{w\geq k+1}$. By the lemma
  we have maps between the two sequences. The unicity of maps into and
  out of $*$ makes these unique. Thus $A \simeq *$ and $B \simeq X$.
\end{proof}

\begin{corollary}
  If $X$ has weight $w\leq n$ then for all $k \geq n$ any weight
  decomposition
  \begin{equation*}
    \begin{tikzcd}
      A_{w\leq k} \ar[r] & X_{w\leq n} \ar[r] & B_{w\geq k+1}
    \end{tikzcd}
  \end{equation*}
  is equivalent to the trivial decomposition
  \begin{equation*}
    \begin{tikzcd}
      X_{w\leq n} \ar[r] & X_{w\leq n} \ar[r] & *
    \end{tikzcd}
  \end{equation*}
\end{corollary}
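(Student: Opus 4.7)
My plan is to mirror the preceding corollary, switching the roles of $\cat C_{w\leq n}$ and $\cat C_{w\geq n}$. The first step is to note that the weight structure axiom $\cat T_{w\leq -1} \subseteq \cat T_{w\leq 0}$ implies the subcategories $\cat C_{w\leq m}$ grow monotonically in $m$, so the hypothesis $k \geq n$ yields $X \in \cat C_{w\leq n} \subseteq \cat C_{w\leq k}$. Hence the triangle $X \xrightarrow{\id} X \to *$ is itself a weight decomposition of $X$ at degree $k$, since $X \in \cat C_{w\leq k}$ and $* \in \cat C_{w\geq k+1}$ trivially.

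The next step is to compare the given decomposition $A \to X \to B$ to the trivial decomposition $X \to X \to *$ by applying Corollary \ref{cor1} (with $n = m = k$), producing maps of distinguished triangles in both directions that restrict to $\id_X$ on the middle object. The uniqueness of morphisms into and out of the zero object---exactly as in the dual argument of the previous corollary---pins down the comparison maps and forces the identifications $A \simeq X$ and $B \simeq *$. It is also worth noting that orthogonality already kills the map $X \to B$ in the given triangle (since $X \in \cat C_{w\leq k}$ and $B \in \cat C_{w\geq k+1}$), which provides a useful sanity check: the triangle splits off a copy of $X$, consistent with the claim.

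The main obstacle is the uniqueness argument. Corollary \ref{cor1} alone supplies uniqueness of comparison maps only when the target degree strictly exceeds the source, so I cannot appeal to it directly with both decompositions at degree $k$. Instead, the universal property of the zero object rigidifies the comparison maps touching $*$, and this rigidity propagates through the maps of triangles to yield the desired equivalence---precisely as in the preceding corollary.
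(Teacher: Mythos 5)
Your strategy is the one the paper itself uses: its proof reads ``identical to the last corollary,'' i.e.\ exhibit the trivial triangle as a weight decomposition at degree $k$, produce comparison maps via Corollary \ref{cor1}, and invoke the unicity of maps into and out of $*$. You correctly isolate the weak point, but the way you dispose of it --- ``this rigidity propagates through the maps of triangles'' --- is precisely the missing step, and it does not go through. Unicity of the arrows $B \to *$ and $* \to B$ only gives you that the composite $* \to B \to *$ is $\id_*$, which is automatic; to conclude $B \simeq *$ you would need the other composite $B \to * \to B$ to be $\id_B$, i.e.\ uniqueness of endomorphisms of the triangle $A \to X \to B$ lying over $\id_X$. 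Corollary \ref{cor1} supplies that uniqueness only when the target degree strictly exceeds the source degree, which never happens here, and the universal property of the zero object contributes nothing toward it.

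Moreover the gap cannot be closed, because the asserted conclusion fails as stated. In finite spectra with the cellular weight structure take $X = S^0$, $n = k = 0$: the fold map gives a distinguished triangle $S^0 \vee S^0 \to S^0 \to S^1$ with $S^0 \vee S^0 \in \Sp_{w\leq 0}$ and $S^1 \in \Sp_{w\geq 1}$, a perfectly good weight decomposition of $S^0$ at degree $0$ that is not isomorphic to $S^0 \to S^0 \to *$. Your (correct) orthogonality observation actually yields the sharpest available conclusion: since $X \to B$ vanishes, the triangle splits as $A \simeq X \oplus \Sigma^{-1}B$, so $\Sigma^{-1}B$ is a retract of $A$ and hence lies in $\cat C_{w=k}$ --- but it need not be zero, as the example shows. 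So the argument (yours and the paper's) proves only a splitting statement, not the claimed equivalence; a correct formulation would either weaken ``equivalent'' to ``receives/admits a map of decompositions from the trivial one'' or impose a minimality hypothesis on the decomposition.
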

\begin{proof}
  The proof is identical to the last corollary.
\end{proof}

\begin{proposition}\label{prop2}
  If $X$ has weight $w\geq n$ then for all $k \geq n$ any weight
  decomposition
  \begin{equation*}
    \begin{tikzcd}
      A_{w\leq k} \ar[r] & X_{w\geq n} \ar[r] & B_{w\geq k+1}
    \end{tikzcd}
  \end{equation*}
  has $A$ in $\cat C_{w\geq n}$.
\end{proposition}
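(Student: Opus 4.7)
The plan is to use the orthogonality characterization of $\cat C_{w\geq n}$ from Proposition \ref{mappingprop}: it suffices to show that $h\cat C(Y, A) = 0$ for every $Y \in \cat C_{w\leq n-1}$. The key move is to rotate the given weight decomposition backwards in the triangulated category $h\cat C$. The cofiber sequence $A \to X \to B$ extends to the fiber sequence $\Sigma^{-1} B \to A \to X$, placing $A$ as the middle term between two objects whose weights we can control directly from the hypotheses.

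I would then observe that both outer terms of this rotated sequence lie in $\cat C_{w\geq n}$. The hypothesis $X \in \cat C_{w\geq n}$ handles the right-hand term immediately. For $\Sigma^{-1} B$, the decomposition datum $B \in \cat C_{w\geq k+1}$ gives $\Sigma^{-1} B \in \cat C_{w\geq k}$, and since $k \geq n$ we have the inclusion $\cat C_{w\geq k} \subseteq \cat C_{w\geq n}$, so $\Sigma^{-1} B$ indeed lies in $\cat C_{w\geq n}$.

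Applying $h\cat C(Y,-)$ to $\Sigma^{-1} B \to A \to X$ then yields an exact sequence
\begin{equation*}
  h\cat C(Y, \Sigma^{-1} B) \to h\cat C(Y, A) \to h\cat C(Y, X)
\end{equation*}
whose outer terms both vanish by orthogonality, since $Y$ has weight $\leq n-1$ while both $\Sigma^{-1} B$ and $X$ have weight $\geq n$. Thus $h\cat C(Y, A) = 0$ for every $Y \in \cat C_{w\leq n-1}$, and Proposition \ref{mappingprop} gives $A \in \cat C_{w\geq n}$.

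There is no real obstacle to the argument; the only subtle decision is which direction to rotate. Rotating forward instead to $X \to B \to \Sigma A$ and invoking Proposition \ref{PropGeqCofibers} would only yield $\Sigma A \in \cat C_{w\geq n}$, hence $A \in \cat C_{w\geq n-1}$, which is one degree weaker than what the proposition demands. The backward rotation, paired with the orthogonality characterization rather than the closure properties of $\cat C_{w\geq n}$, is what extracts the sharp bound.
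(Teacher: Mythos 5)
Your proof is correct and follows the same route as the paper: rotate the decomposition backward to $\Sigma^{-1}B \to A \to X$, note that both outer terms lie in $\cat C_{w\geq n}$ (using $k\geq n$ for $\Sigma^{-1}B$), and conclude via the orthogonality characterization of Proposition \ref{mappingprop}. You simply spell out the mapping-space exact sequence that the paper leaves implicit.
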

\begin{proof}
  Note that $\Sigma^{-1} B$ has weight $w\geq k$ by the axioms. By
  assumption, this means that $\Sigma^{-1}B \in \cat C_{w \geq
    n}$. Thus, rotating back the fiber sequence for the decomposition
  yields the fiber sequence
  \begin{equation*}
    \begin{tikzcd}
      (\Sigma^{-1} B)_{w\geq n} \ar[r] & A \ar[r] & X_{w\geq n}
    \end{tikzcd}
  \end{equation*}
  which demonstrates $A\in \cat C_{w\geq n}$ by Proposition
  \ref{mappingprop}.
\end{proof}
\begin{proposition}\label{PropCofibBound}
  If $X$ has weight $w\leq n$ then for all $k \leq n-1 $ any weight
  decomposition
  \begin{equation*}
    \begin{tikzcd}
      A_{w\leq k} \ar[r] & X_{w\leq n} \ar[r] & B_{w\geq k+1}
    \end{tikzcd}
  \end{equation*}
  has $B$ in $\cat C_{w\leq n}$.
\end{proposition}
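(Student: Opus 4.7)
The plan is to mimic the proof of Proposition \ref{prop2} in dual form, swapping the roles of fibers and cofibers and swapping the orthogonality direction in Proposition \ref{mappingprop}. Specifically, Proposition \ref{prop2} used the rotated fiber sequence on the left ($\Sigma^{-1} B \to A \to X$) together with the fact that $\cat C_{w\geq n}$ can be characterized as the right-orthogonal to $\cat C_{w\leq n-1}$. Here I want the dual: rotate the cofiber sequence forward and use the left-orthogonality characterization of $\cat C_{w\leq n}$.

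First I would observe that since $A$ has weight $w\leq k$ and $k\leq n-1$, we have $A\in\cat C_{w\leq n-1}$, hence $\Sigma A\in\cat C_{w\leq n}$ by the suspension axiom for the weight structure. Rotating the cofiber sequence $A\to X\to B$ forward yields a cofiber sequence
\begin{equation*}
\begin{tikzcd}
X \ar[r] & B \ar[r] & \Sigma A
\end{tikzcd}
\end{equation*}
in which both $X$ and $\Sigma A$ lie in $\cat C_{w\leq n}$.

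Next I would apply $h\cat C(-,Y)$ for an arbitrary $Y\in\cat C_{w\geq n+1}$ to this cofiber sequence to produce the exact sequence
\begin{equation*}
\begin{tikzcd}
h\cat C(\Sigma A, Y) \ar[r] & h\cat C(B, Y) \ar[r] & h\cat C(X, Y).
\end{tikzcd}
\end{equation*}
The outer terms vanish by the orthogonality axiom, since $\Sigma A, X\in\cat C_{w\leq n}$ and $Y\in\cat C_{w\geq n+1}$. Hence $h\cat C(B,Y)=0$ for every $Y\in\cat C_{w\geq n+1}$, and Proposition \ref{mappingprop} concludes that $B\in\cat C_{w\leq n}$.

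There is no real obstacle; the only thing to be mildly careful about is the sign of the shift, making sure that $\Sigma A$ (rather than $A$ itself) is what appears after rotation, so that its weight is indeed bounded above by $n$ rather than $n-1$. The remainder of the argument is a direct dualization of Proposition \ref{prop2}.
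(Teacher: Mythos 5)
Your proof is correct and is exactly the argument the paper intends: the paper's proof of Proposition \ref{PropCofibBound} simply says it is the dual of Proposition \ref{prop2}, i.e., rotate the triangle forward to $X \to B \to \Sigma A$, note $\Sigma A, X \in \cat C_{w\leq n}$, and conclude via orthogonality and Proposition \ref{mappingprop}. Your version just spells out the mapping-group exact sequence that the paper leaves implicit.
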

\begin{proof}
  The proof is identical to that for the previous proposition.
\end{proof}

\begin{proposition}\label{prop_maps_detected_levelwise}
  Suppose $\cat C$ is a stable $\oo$-category with a non-degenerate
  weight structure. Maps are detected by maps into or out of the heart
  in the following sense.

  For any $X\in \cat C$, $\pi_0h\cat C(X,Y)=0$ for all
  $Y\in \cat C_{w\geq 0}$ if and only if $\pi_0 h\cat C(X,Q)=0$ for
  all $Q \in \cat C_{w=i}$ for $i \geq 0$.

  Likewise, for any $Y\in \cat C$, $\pi_0 h\cat C(X,Y) = 0$ for all
  $X \in \cat C_{w\leq 0}$ if and only if $\pi_0 h\cat C(Q, Y) = 0$
  for all $Q \in \cat C_{w=i}$ with $i\leq 0$.
\end{proposition}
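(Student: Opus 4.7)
The forward (``only if'') direction is immediate: since $\cat C_{w=i} \subseteq \cat C_{w\geq 0}$ for all $i \geq 0$, vanishing of $\pi_0 h\cat C(X,-)$ on the larger subcategory specializes to vanishing on the heart slices. The work is all in the converse direction, where I plan to build a ``weight tower'' of $Y$ whose successive cofibers lie in the heart and argue inductively.

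Given $Y\in \cat C_{w\geq 0}$, take a weight decomposition at degree $0$: a fiber sequence $A_0 \to Y \to B_0$ with $A_0 \in \cat C_{w\leq 0}$ and $B_0 \in \cat C_{w\geq 1}$. Proposition \ref{prop2} promotes $A_0$ into $\cat C_{w\leq 0}\cap \cat C_{w\geq 0} = \cat C_{w=0}$, placing it in the heart. Iterating produces fiber sequences $A_n \to B_{n-1} \to B_n$ (with the convention $B_{-1} = Y$) such that $A_n \in \cat C_{w=n}$ and $B_n \in \cat C_{w\geq n+1}$. Applying $h\cat C(X,-)$ and using the long exact sequences gives
\begin{equation*}
\pi_0 h\cat C(X, A_n) \to \pi_0 h\cat C(X, B_{n-1}) \to \pi_0 h\cat C(X, B_n),
\end{equation*}
whose left term vanishes by hypothesis (each $A_n$ lies in $\cat C_{w=n}$ with $n\geq 0$). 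The maps $\pi_0 h\cat C(X,Y) \hookrightarrow \pi_0 h\cat C(X, B_n)$ are therefore injective for every $n$, and it suffices to show that $\pi_0 h\cat C(X, B_n) = 0$ for some $n$.

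This last vanishing is the main obstacle. My plan is to use non-degeneracy on the $X$-side: take a weight decomposition $X^{(\leq n-1)} \to X \to X^{(\geq n)}$ of $X$ at degree $n-1$. Since $B_n \in \cat C_{w\geq n+1}\subset \cat C_{w\geq n}$, orthogonality kills $h\cat C(X^{(\leq n-1)}, B_n)$ and its shift, so the associated long exact sequence gives $\pi_0 h\cat C(X, B_n)\isom \pi_0 h\cat C(X^{(\geq n)}, B_n)$. As $n$ grows, $X^{(\geq n)}$ lives in arbitrarily deep $\cat C_{w\geq n}$; since $\bigcap_{n\to\oo}\cat C_{w\geq n}=0$ by non-degeneracy, the tower of $X^{(\geq n)}$ collapses (trivially if the weight structure is bounded, so that $X^{(\geq n)}=0$ eventually), forcing $\pi_0 h\cat C(X,B_n)=0$. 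Chasing back through the injections yields $\pi_0 h\cat C(X,Y)=0$. The dual statement is proved by the symmetric argument, iteratively stripping heart-cells off the bottom of $X$ via weight decompositions in $\cat C_{w\leq 0}$ and invoking the dual orthogonality and non-degeneracy $\bigcap_{n\to -\oo}\cat C_{w\leq n}=0$.
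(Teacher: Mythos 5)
Your tower construction on $Y$ is exactly the paper's argument: iterated weight decompositions $A_n \to B_{n-1} \to B_n$ with $A_n \in \cat C_{w=n}$ (via Proposition \ref{prop2}), followed by the long exact sequence embedding $\pi_0 h\cat C(X,Y)$ into $\pi_0 h\cat C(X,B_n)$ for every $n$. Up to that point you and the paper coincide; the paper then concludes directly by applying non-degeneracy to the family $\{B_n\}$, which acquire arbitrarily high weight.

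Your final step, however, has a genuine gap. After the (correct) orthogonality reduction $\pi_0 h\cat C(X,B_n) \isom \pi_0 h\cat C(X^{(\geq n)},B_n)$, you claim this vanishes because ``$X^{(\geq n)}$ lives in arbitrarily deep $\cat C_{w\geq n}$'' and ``the tower of $X^{(\geq n)}$ collapses'' by non-degeneracy. Non-degeneracy only says that a \emph{single} object lying in $\cat C_{w\geq n}$ for \emph{all} $n$ is a zero object; it does not force the coskeleta $X^{(\geq n)}$ of a fixed $X$ to vanish for large $n$ (that would be boundedness above, which is not assumed), and there is no orthogonality between two objects that both have high weight --- orthogonality pairs $\cat C_{w\leq k}$ against $\cat C_{w\geq k+1}$, never $\cat C_{w\geq n}$ against $\cat C_{w\geq n+1}$. (In spectra, $\pi_0 h\Sp(\Sigma^n S^0, \Sigma^n S^0) = \Z$ no matter how large $n$ is.) So $\pi_0 h\cat C(X^{(\geq n)},B_n)=0$ does not follow, and the detour through a weight decomposition of $X$ does not close the argument; your own parenthetical (``trivially if the weight structure is bounded'') concedes that only the bounded case is actually handled, whereas the proposition assumes only non-degeneracy. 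The paper's concluding sentence is itself terse, but it at least applies the non-degeneracy hypothesis to the targets $B_n$, which is the side on which the argument must run; your replacement rests on a principle that is false in general.
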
 %
\begin{proof}
  Both proofs are identical so we check the first.  The forward
  direction is trivial. For the reverse implication, fix a map
  $f:X\to Y$ in $h\cat C$. Pick a weight decomposition at degree $0$
  for $Y$. By proposition \ref{prop2}, this takes the form
  \begin{equation*}
    \begin{tikzcd}
      A_{w = 0} \ar[r] & Y_{w\geq 0} \ar[r] & B_{w \geq 1}
    \end{tikzcd}
  \end{equation*}
  and applying $\pi_0 h\cat C(X,-)$ produces a long exact sequence on
  mapping groups. By assumption,
  $\pi_0 h\cat C(X,A) = \pi_0h\cat C(X,\Sigma A) = 0$, so we have that
  $\pi_0 h\cat C(X,Y)$ is isomorphic to $\pi_0 h\cat C(X,B)$ and $B$
  has weight %
  $w\geq 1$. We can iterate this argument: replace $Y$ with $B$ in
  this argument and take a weight decomposition at degree
  $1$. Inductively, we can conclude that $\pi_0 h\cat C(X,Y)$ is
  isomorphic to $\pi_o h\cat C(X,\twiddle B)$ where $\twiddle B$ can
  be constructed in an arbitrarily high weight $w\geq n$. As
  $n\to \oo$, we conclude that $\pi_0 h\cat C(X,Y)\isom 0$ as only
  zero objects have arbitrarily high weights due to the non-degeneracy
  of $w$.
\end{proof}

\subsection{Generating weight structures}\label{generatingweights}

Suppose $\cat C$ is a stable $\oo$\hyp{}category and $\cat H$ is a
subcategory in $\cat C$. A natural question is whether there exists a
weight structure on $\cat C$ with $\cat H$ as its heart. We will
require that $\cat H$ is closed under retracts and finite coproducts.

\begin{definition}
  We say that $\cat H$ \emph{weakly generates} $\cat C$ if
  $X\in \cat C$ and $$\pi_0 h\cat C(\Sigma^n S, X) = 0$$ for all
  $n\in \Z$ and for all $S\in \cat H$, then $X$ is a zero object in
  $\cat C$.
\end{definition}

\begin{definition}
  We say that $\cat H$ is \emph{negative} if for all $n>0$ we have
  $$\pi_0 h\cat C(S, \Sigma^n S') = 0$$ for all $S, S' \in \cat H$.
\end{definition}
For spectrally-enriched categories, Blumberg and Mandell introduce a
very similar notion to a negative subcategory, namely a
\emph{connective class}. This definition is essential to their form of
the sphere theorem which is discussed in
\cite{2011arXiv1111.4003B}*{3.4}.

\begin{proposition}[{\cite{MR2746283}*{4.3.2.III(ii) and 4.5.2}}]\label{generatingweightstheorem}
  Suppose the objects of $\cat H$ are compact, $\cat H$ is negative,
  and $\cat H$ weakly generates $\cat C$. Suppose further that all
  finite cell complexes constructed from $\cat H$ exist in $\cat C$.

  Let $\cat C_-$ be the full subcategory of $\cat C$ of objects $X$ so
  that $\forall S\in \cat H$ there exists a $N\in \Z$ so that
  $\pi_0 \cat C(Y, \Sigma^n S) = 0$ for all $n > N$. Then $\cat C_-$
  admits a weight structure with $\cat H$ contained in its heart.
\end{proposition}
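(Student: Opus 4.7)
The plan is to follow Bondarko's strategy: define $\cat C_{w \geq 0}$ as an extension-closure of positive suspensions of $\cat H$ inside $\cat C_-$, define $\cat C_{w \leq 0}$ by orthogonality, and then produce weight decompositions for every $X \in \cat C_-$ by induction on a top-degree invariant. Concretely, let $\cat C_{w \geq 0} \subseteq \cat C_-$ be the smallest full subcategory containing $\bigcup_{n \geq 0} \Sigma^n \cat H$ and closed under finite coproducts, retracts, and extensions. Negativity of $\cat H$ guarantees $\Sigma^m \cat H \subseteq \cat C_-$ for each $m \geq 0$, and $\cat C_-$ is closed under extensions via the long exact sequence on $\pi_0 h\cat C(-, \Sigma^n S)$, so this definition is consistent. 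Then set
\begin{equation*}
\cat C_{w \leq 0} \coloneq \{X \in \cat C_- : \pi_0 h\cat C(X, Y) = 0 \text{ for all } Y \in \cat C_{w \geq 1}\}.
\end{equation*}

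The easy axioms follow directly. Orthogonality is built into the definition of $\cat C_{w \leq 0}$. For $\cat C_{w \geq 1} \subseteq \cat C_{w \geq 0}$, use closure under positive suspensions. For $\cat C_{w \leq -1} \subseteq \cat C_{w \leq 0}$, if $X = \Sigma^{-1} X'$ with $X' \in \cat C_{w \leq 0}$ and $Y \in \cat C_{w \geq 1}$, then $\pi_0 h\cat C(X, Y) \isom \pi_0 h\cat C(X', \Sigma Y)$ with $\Sigma Y \in \cat C_{w \geq 2} \subseteq \cat C_{w \geq 1}$, which vanishes. Closure of both halves under retracts and finite coproducts follows by standard exact-sequence manipulations. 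For the conclusion $\cat H \subseteq \cat C_{\heart w}$, the inclusion $\cat H \subseteq \cat C_{w \geq 0}$ is by construction, and $\cat H \subseteq \cat C_{w \leq 0}$ is proven by inducting on the extension-filtration defining $\cat C_{w \geq 1}$: the base case $\pi_0 h\cat C(S, \Sigma^n S') = 0$ for $S, S' \in \cat H$ and $n \geq 1$ is precisely negativity, and long exact sequences propagate the vanishing through extensions, retracts, and coproducts.

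The main obstacle is producing a weight decomposition for every $X \in \cat C_-$. I would induct on $N(X) \coloneq \sup\{n : \exists\, S \in \cat H,\ \pi_0 h\cat C(X, \Sigma^n S) \neq 0\}$, which lies in $\Z \cup \{-\oo\}$ by the definition of $\cat C_-$. If $N(X) = -\oo$, weak generation forces $X \simeq 0$ and the trivial decomposition suffices. Otherwise, choose a generating collection of maps $f_i : X \to \Sigma^{N(X)} S_i$ for the top-degree mapping sets, assemble them into $\varphi : X \to \bigoplus_i \Sigma^{N(X)} S_i$, and let $X'$ be the fiber. A diagram chase in the long exact sequence for $\pi_0 h\cat C(-, \Sigma^n S)$, combined with negativity (so that $\pi_0 h\cat C(\Sigma^{N(X)-1} S_i, \Sigma^{N(X)} S) = 0$), shows that $\varphi^*$ is surjective in degree $N(X)$ and hence $N(X') < N(X)$. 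Iterating produces a finite tower of fibrations whose successive cofibers are shifts of $\cat H$-objects; collecting the layers above and below degree zero yields a weight decomposition $A \to X \to B$ with $A \in \cat C_{w \leq 0}$ and $B \in \cat C_{w \geq 1}$. The delicate point here is finiteness at each stage: the compactness of objects of $\cat H$ and the hypothesis that finite cell complexes exist in $\cat C$ are exactly what let one choose a manageable generating set of top-degree maps at each step and guarantee that the inductive construction both terminates and stays inside $\cat C_-$.
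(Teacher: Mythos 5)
The paper does not actually prove this proposition---it is imported wholesale from Bondarko \cite{MR2746283}*{4.3.2, 4.5.2}---so I am comparing your argument against his. Your setup is the right one and matches Bondarko's: take $\cat C_{w\geq 0}$ to be the retract/extension/finite-coproduct closure of $\bigcup_{n\geq 0}\Sigma^n\cat H$, define $\cat C_{w\leq 0}$ by orthogonality, and verify the containment axioms and $\cat H\subseteq \cat C_{\heart w}$ by propagating negativity through the closure operations. All of that is fine. The problems are concentrated in the weight-decomposition step, and they are genuine. First, a small one: you appeal to weak generation to conclude $X\simeq 0$ when $N(X)=-\oo$, but weak generation is a statement about $\pi_0 h\cat C(\Sigma^n S, X)$ (maps \emph{into} $X$), whereas $N(X)$ is defined via $\pi_0 h\cat C(X,\Sigma^n S)$ (maps \emph{out of} $X$); the variance is backwards and the implication does not follow from the stated hypotheses. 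This particular step is excisable---you only need to run the tower until $N(X^{(k)})\leq 0$, at which point $X^{(k)}\in\cat C_{w\leq 0}$ by orthogonality against the generators of $\cat C_{w\geq 1}$, and the cofiber of $X^{(k)}\to X$ is a finite extension of coproducts of $\Sigma^{j}\cat H$ with $j\geq 1$; the tower never needs to reach a zero object. (Relatedly, since the bound $N$ in the definition of $\cat C_-$ is allowed to depend on $S$, your $N(X)=\sup_S N_S$ need not be finite, so even the starting point of the induction is not secured.)

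The serious gap is the finiteness of the attaching data. To form $\varphi:X\to\bigoplus_i\Sigma^{N}S_i$ with $\varphi^*$ surjective onto $\pi_0 h\cat C(X,\Sigma^{N}S)$ for \emph{every} $S\in\cat H$, you need the top-degree mapping groups, viewed collectively as a functor on $\cat H$, to be generated by finitely many maps---finitely many, because only finite coproducts and finite cell complexes are assumed to exist in $\cat C$. Nothing in the hypotheses supplies this for a general $X\in\cat C_-$: compactness of the objects of $\cat H$ controls maps \emph{out of} $\cat H$-objects into filtered colimits and says nothing about the size of $\pi_0 h\cat C(X,\Sigma^N S)$, and $X$ is neither assumed compact nor assumed to lie in the thick subcategory generated by $\cat H$. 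This is precisely why Bondarko's construction runs in the opposite direction: he works in an ambient category with arbitrary coproducts that is compactly generated by $\cat H$, builds the skeleton $A$ from below by coproducts indexed over maps from shifted generators into $X$ (which is where compactness and weak generation actually do their work), and only then restricts to the bounded-above subcategory $\cat C_-$. As written, your top-down construction would need an additional local-finiteness hypothesis on $\cat C$ to go through.
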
%

We introduce two examples of such weight structures now and provide a
more detailed discussion in section \ref{examples}.

\begin{example}
  Let $R$ be a commutative ring. Let $\cat C= \Ch_R$ denote the stable
  $\oo$-category of bounded-above chain complexes of
  finitely\hyp{}generated $R$\hyp{}modules. There is a weight structure on
  $\Ch_R$ where $\Ch_{R, w\geq 0}$ contains complexes whose homology
  is concentrated in non-negative degrees, and $\Ch_{R, w\leq 0}$
  contains complexes which are quasi-isomorphic to complexes of
  projectives whose homology is non-positive degrees (or,
  equivalently, complexes concentrated in non-positive degrees and
  projective in degree $0$). The heart of this weight structure is the
  finitely-generated projective $R$-modules included as complexes
  concentrated in degree $0$.
\end{example}

\begin{example}
  Let $\Sp^\fin$ denote the stable $\oo$-category of finite
  spectra. $\Sp^\fin$ admits a weight structure generated by the
  sphere spectrum $S^0$. $\Sp^\fin_{w\geq 0}$ consists of all
  connective spectra and $\Sp^\fin_{w\leq 0}$ consists of spectra
  whose integral homology is concentrated in non-positive
  degrees. These are precisely spectra which occur as $k$-skeleta for
  other spectra for $k\leq 0$. %
\end{example}

\begin{definition}
  Let $\cat C$ be a stable $\oo$-category equipped with a weight
  structure and a $t$-structure. We say that these structures are
  \emph{left adjacent} (respectively, \emph{right adjacent}) if
  $\cat C_{t\geq 0} = \cat C_{w\geq 0}$ (respectively,
  $\cat C_{t\leq 0} = \cat C_{w\leq 0}$).
\end{definition}

When a weight structure is left adjacent to a $t$-structure, the
orthogonality relations of the two structures interact to permit more
specific descriptions of the hearts $\cat C_{\heart w}$ and
$\cat C_{\heart t}$.

If $X$ is in the heart of the weight structure, then $X$ is in
$\cat C_{w\geq 0} = \cat C_{t\geq 0}$ so for any $Y$ in
$\cat C_{t\leq -1}$ we have $\pi_0 \cat C(X,Y)=0$. Likewise, $X$ also
lies in $\cat C_{w \leq 0}$ so the orthogonality relations for the
weight structure make it admit no maps (on $\pi_0$) to
$\cat C_{w \geq 1} = \cat C_{t\geq 1}$. As a consequence of the
overdetermined nature of weight and $t$-structures (\cf Proposition
\ref{mappingprop}), the heart of the weight structure
consists of precisely those objects $X$ with $\pi_0\cat C(X,Y)=0$ for
$Y \in \cat C_{t\geq 1} \cup \cat C_{t\leq -1}$.

A similar analysis can be applied to $\cat C_{\heart t}$ to deduce
that the heart can be detected by $\cat C_{w\leq -1}$ and
$\cat C_{w\geq 1}$. Together with Proposition
\ref{mappingprop}, we arrive at the following
description.
\begin{proposition}
  Suppose $\cat C$ admits left adjacent weight- and $t$\hyp{}structures. $X$
  is in $\cat C_{\heart w}$ if and only if
  $\pi_0\cat C(X,\Sigma^i B) = 0$ for all $B\in \cat C_{\heart t}$ for
  $i\neq 0$. Likewise, $Y$ is in $\cat C_{\heart t}$ if and only if
  $\pi_0\cat C(\Sigma^i A,Y)=0$ for all $A\in \cat C_{\heart w}$ for
  $i\neq 0$.
\end{proposition}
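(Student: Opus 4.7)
The forward direction mostly repeats the informal analysis in the paragraph preceding the statement. If $X \in \cat C_{\heart w}$, left adjacency gives $X \in \cat C_{w\geq 0} = \cat C_{t\geq 0}$ and $X \in \cat C_{w\leq 0}$, so the weight orthogonality yields $\pi_0\cat C(X,Z)=0$ for every $Z\in\cat C_{w\geq 1}=\cat C_{t\geq 1}$ and the $t$-structure orthogonality yields $\pi_0\cat C(X,W)=0$ for every $W\in\cat C_{t\leq -1}$. For $B \in \cat C_{\heart t}$ we have $\Sigma^i B\in\cat C_{t\geq 1}$ when $i\geq 1$ and $\Sigma^i B\in\cat C_{t\leq -1}$ when $i\leq -1$, so the desired vanishing follows in either case. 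The forward direction of the second statement is symmetric, using $\Sigma^i A\in\cat C_{w\geq 1}$ for $A\in\cat C_{\heart w}$, $i\geq 1$, and $\Sigma^i A\in\cat C_{w\leq -1}$ for $i\leq -1$.

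For the converse in the first statement, the plan is to mirror the proof of Proposition \ref{prop_maps_detected_levelwise}, substituting the Postnikov tower supplied by the $t$-structure for weight decompositions. By Proposition \ref{mappingprop} together with left adjacency, in order to conclude $X\in\cat C_{\heart w}$ it suffices to show $\pi_0\cat C(X,Y)=0$ for every $Y\in\cat C_{t\geq 1}$ (which gives $X\in\cat C_{w\leq 0}$) and for every $Y\in\cat C_{t\leq -1}$ (which gives $X\in\cat C_{t\geq 0}=\cat C_{w\geq 0}$). Given $Y\in\cat C_{t\geq 1}$, I would apply $\pi_0\cat C(X,-)$ to the fiber sequence $\tau_{\geq n+1} Y \to Y \to \tau_{\leq n} Y$ and inductively peel off the Postnikov layers $\Sigma^i\pi_i(Y)\in\Sigma^i\cat C_{\heart t}$ for $1\leq i\leq n$. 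The hypothesis $\pi_0\cat C(X,\Sigma^i B)=0$ for shifted hearts kills each successive layer, producing an isomorphism $\pi_0\cat C(X,Y)\isom\pi_0\cat C(X,\tau_{\geq n+1}Y)$. Sending $n\to\oo$ and invoking non-degeneracy of the $t$-structure (in the same role that non-degeneracy of $w$ plays in Proposition \ref{prop_maps_detected_levelwise}) then forces $\pi_0\cat C(X,Y)=0$. The dual co-Postnikov argument based on $\tau_{\leq n} Y$ handles $Y\in\cat C_{t\leq -1}$.

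The converse of the second statement follows the same pattern with the roles of $w$ and $t$ exchanged: weight decompositions replace Postnikov truncations, and this time the convergence step is supplied directly by Proposition \ref{prop_maps_detected_levelwise}, whose hypothesis of non-degenerate $w$ is in force throughout. The main obstacle I anticipate is precisely this convergence step on the $t$-structure side: without an auxiliary non-degeneracy (or boundedness) assumption on the $t$-structure, one cannot immediately infer that the limit of $\pi_0\cat C(X,\tau_{\geq n+1}Y)$ vanishes. In the intended applications (\eg the left-adjacent pairings arising in Bondarko's theory), such an assumption is automatic, so I expect the proof to proceed as described.
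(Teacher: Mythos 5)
Your proof is correct and is essentially the argument the paper intends: the paper offers no separate proof of this proposition beyond the two paragraphs that precede it, which give exactly your forward direction and then assert the converse via the double-orthogonality of Proposition \ref{mappingprop} together with its standard $t$-structure analogue. What you add is the step the paper elides, namely the passage from ``$X$ admits no maps to $\cat C_{t\geq 1}\cup\cat C_{t\leq -1}$'' down to ``$X$ admits no maps to shifted copies of $\cat C_{\heart t}$,'' which you carry out by peeling off Postnikov layers on the $t$-side and by weight decompositions (Proposition \ref{prop_maps_detected_levelwise}) on the $w$-side; this is the natural completion of the paper's sketch. The caveat you raise is genuine rather than a defect of your argument: the convergence of the Postnikov peeling needs non-degeneracy of the $t$-structure, and the weight-side argument needs non-degeneracy of $w$, neither of which is stated as a hypothesis of the proposition, although the surrounding section works throughout with non-degenerate weight structures and left adjacency transfers the condition $\bigcap_n \cat C_{t\geq n}=0$ from $w$ to $t$. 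You have correctly identified the implicit hypotheses rather than left a gap of your own.
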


\begin{example}
  In the category of finite spectra, the cellular weight structure is
  left adjacent to the Postnikov $t$-structure. The heart of the
  weight structure consists of wedge sums of the sphere spectrum and
  the heart of the $t$-structure is the Eilenberg--Mac Lane
  spectra. The proposition notes that the former are (equivalently)
  the spectra with whose cohomology is concentrated in degree $0$ (for
  all $HG$), while the latter are precisely those spectra with
  homotopy groups concentrated in degree $0$.
\end{example}

\subsection{On weights and Waldhausen's sphere theorem}

This section places Waldhausen's original sphere theorem within our
setting of weight structures on stable $\oo$-categories. Proposition
\ref{implieswald} proves that our theorem from section
\ref{spheretheorem} generalizes Waldhausen's. We take the rest of the
section to explore the limits how analogous language can be lifted
from Waldhausen's setting to the world of weight structures.

As originally formulated in \cite{1126}*{1.7}, Waldhausen's sphere
theorem applies to a Waldhausen category $\cat C$ equipped with a
cylinder functor that satisfies the cylinder axiom. The category must
be further equipped with a $\Z$-graded homology functor $H_*$ which
carries cofiber sequences to long exact sequences in some abelian
target category. Furthermore, weak equivalences in $\cat C$ are
required to be precisely isomorphisms on homology. Finally, the
hypothesis for the sphere theorem is that any $m$-connected map
$X\to Y$ (with respect to $H_*$) can be factored as
\begin{equation*}
  \begin{tikzcd}
    X_m \ar[r, tail] & X_{m+1} \ar[r, tail] & \cdots \ar[r, tail] &
    X_n \ar[r, "\simeq"] & Y
  \end{tikzcd}
\end{equation*}
where the quotients $X_{k+1}/X_{k}$ are all homology spheres of
dimension $k+1$. In this case, the sphere theorem says that the
$K$-theory of the stabilization of $\cat C$ (under the suspension
defined by the cylinder functor) is equivalent to the $K$-theory of
the stabilization (under suspension again) of the homology spheres. In
Waldhausen's context, a \emph{homology $n$-sphere} is an object $X$
whose homology $H_i(X)$ is $0$ unless $i=n$ and then lies in some
fixed full subcategory $\cat E$ of the abelian target category of
$H_*$ which is closed under extensions and retracts.

\begin{proposition}\label{implieswald}
  If $\cat C$ is a Waldhausen category satisfying the hypotheses of
  Waldhausen's sphere theorem, then the stable $\oo$-category
  $\Stab(\cat C)$ admits a bounded and non-degenerate weight structure
  whose heart is equivalent to the stabilized homology spheres in
  $\cat C$ if it has a set of compact generators which:
  \begin{itemize}
  \item generate the $\oo$-category under finite colimits,
  \item are homology $0$-spheres, and
  \item form a negative class in $\cat C$.
  \end{itemize}
\end{proposition}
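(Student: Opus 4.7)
The plan is to obtain the weight structure from Bondarko's generation theorem (Proposition \ref{generatingweightstheorem}). I would take $\cat H \subseteq \Stab(\cat C)$ to be the full subcategory generated by the given compact homology-$0$-sphere generators under finite wedges and retracts. By hypothesis $\cat H$ is compact and negative, and it weakly generates $\Stab(\cat C)$: for a fixed $X$, the subcategory of $Y$ with $\pi_0 h\cat C(\Sigma^n Y, X) = 0$ for all $n \in \Z$ is closed under suspensions and cofibers, hence under finite colimits, and contains the generators, so it equals $\Stab(\cat C)$; specializing to $Y = X$ forces $X \simeq 0$. Stability supplies all finite cell complexes built from $\cat H$, so Proposition \ref{generatingweightstheorem} produces a weight structure on the subcategory $\Stab(\cat C)_-$ with $\cat H$ contained in the heart.

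Next I would verify $\Stab(\cat C)_- = \Stab(\cat C)$ and that the weight structure is bounded and non-degenerate. Applying Waldhausen's factorization hypothesis to the map $* \to Y$ exhibits each $Y$ atop a finite tower $* = X_m \cof X_{m+1} \cof \cdots \cof X_n \wto Y$ whose successive cofibers lie in $\Sigma^{k+1} \cat H$. Induction along this tower via the long exact sequences in $\pi_0 h\cat C(-, \Sigma^j S)$, combined with the negativity of $\cat H$, supplies a uniform $N$ beyond which these groups vanish for $Y$, so $Y \in \Stab(\cat C)_-$. The same tower, together with Propositions \ref{PropGeqCofibers} and \ref{PropCofibBound}, places $Y \in \Stab(\cat C)_{w \geq m} \cap \Stab(\cat C)_{w \leq n}$; this is boundedness, and non-degeneracy then follows from boundedness combined with weak generation, since an object with weight $\geq n$ for all $n$ cannot admit any nontrivial map from a generator after desuspension.

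The main obstacle will be identifying the heart $\Stab(\cat C)_{\heart w}$ with the stabilized homology $0$-spheres. For the containment of the heart into the $0$-spheres, I would use that cofiber sequences in the heart split---since $\pi_0 h\cat C(B, \Sigma A) = 0$ for $A, B \in \Stab(\cat C)_{\heart w}$ by orthogonality---so the heart coincides with the closure of $\cat H$ under wedges and retracts, which is $\cat H$ itself; and every object of $\cat H$ is a homology $0$-sphere because $H_*$ preserves finite coproducts and $\cat E$ is closed under retracts. Conversely, given any stabilized homology $0$-sphere $X$, the factorization of $* \to X$ yields a finite cellular tower whose successive cofibers are homology spheres in dimensions $0, 1, \ldots$; examining the highest nonzero cell via the long exact sequence on $H_*$, the hypothesis that $H_*(X)$ is concentrated in degree $0$ forces all cells to sit in dimension $0$, so $X$ is assembled from $\cat H$ by finite wedges and retracts and lies in the heart.
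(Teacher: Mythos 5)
Your proposal follows the same route as the paper: both obtain the weight structure from Bondarko's generation theorem (Proposition \ref{generatingweightstheorem}) applied to the additive/retract closure of the homology $0$-spheres, use Waldhausen's factorization hypothesis to get boundedness, and identify the heart with the stabilized homology spheres; the paper's own proof is only a two-sentence sketch, and your verification of weak generation, of $\Stab(\cat C)_-=\Stab(\cat C)$, and of boundedness and non-degeneracy fills that sketch in correctly.

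One step in your heart identification does not work as stated. In the converse inclusion you claim that, for a stabilized homology $0$-sphere $X$, inspecting the top cell of a Waldhausen factorization of $*\to X$ via the long exact sequence on $H_*$ ``forces all cells to sit in dimension $0$.'' That deduction is false: a cellular tower for $X$ may contain cancelling cells in higher dimensions (e.g.\ $S^0 \cof S^0\vee S^1 \cof S^0$ realizes $S^0$ with cells in dimensions $0$, $1$, $2$), and the long exact sequence only shows $H_n$ of the top quotient injects into $H_{n-1}(X_{n-1})$, not that the quotient vanishes. The repair is the one the paper implicitly uses: describe the generated weight structure homologically, so that $\cat C_{w\geq 0}$ consists of objects with homology concentrated in degrees $\geq 0$ and $\cat C_{w\leq 0}$ of objects with homology concentrated in degrees $\leq 0$ and $H_0$ in $\cat E$ (equivalently, argue by orthogonality against shifted generators as in Proposition \ref{mappingprop}); a homology $0$-sphere then lies in both subcategories and hence in the heart, with no claim needed about the shape of any particular cell filtration.
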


Although Waldhausen does not require these additional assumptions,
they are true in the cases he studies.

We prove this proposition by defining a weight structure on
$\Stab(\cat C)$ where objects are in weight $w\geq n$ if their
homology is concentrated in degrees $* \geq n$ and in weight $w\leq n$
if their homology is concentrated in degrees $* \leq n$ and
$H_n(X) \in \cat E$. Under the hypotheses listed, we can generate a
weight structure on $\Stab(\cat C)$ using proposition
\ref{generatingweightstheorem}. The heart is precisely the homology
$n$-spheres as claimed.

We can transplant Waldhausen's language to the setting of weight
structures on stable $\oo$-categories. Specifically, we can view
weight structures as providing a language for discussing connectivity
of maps without specifying compact generators whose (co)homology
theories measure connectivity.

\begin{definition}
  A map $f:X\to Y$ in $\cat C$ with cofiber $Cf$ will be called
  $n$-connected if $Cf$ lives in $\cat C_{w\geq n+1}$.
\end{definition}

\begin{proposition}\label{prop3}
  The composite of two $n$\hyp{}connected maps is $n$\hyp{}connected.
\end{proposition}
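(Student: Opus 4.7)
The natural plan is to realize the cofiber of the composite as an extension (in the triangulated sense) of the two individual cofibers, and then appeal to the orthogonality characterization of $\cat C_{w\geq n+1}$ from Proposition~\ref{mappingprop}.

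Concretely, given composable maps $f:X\to Y$ and $g:Y\to Z$, first I would produce the cofiber sequence
\begin{equation*}
  \begin{tikzcd}
    Cf \ar[r] & C(g\comp f) \ar[r] & Cg
  \end{tikzcd}
\end{equation*}
by pasting pushouts. Specifically, in the diagram
\begin{equation*}
  \begin{tikzcd}
    X \ar[r, "f"] \ar[d] & Y \ar[r, "g"] \ar[d] & Z \ar[d] \\
    * \ar[r] & Cf \ar[r] & C(g\comp f)
  \end{tikzcd}
\end{equation*}
the left-hand square and the outer rectangle are pushouts by construction, so the right-hand square is a pushout by the pasting lemma; this identifies the cofiber of $Cf\to C(g\comp f)$ with $Cg$, giving the claimed cofiber sequence.

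Next, fix any $W\in \cat C_{w\leq n}$ and apply $\Hom_{h\cat C}(W,-)$ to this cofiber sequence. Because $\cat C$ is stable, the cofiber sequence is also a fiber sequence, and rotating produces a long exact sequence in the homotopy category of the form
\begin{equation*}
  \begin{tikzcd}[column sep=small]
    \Hom(W, Cf) \ar[r] & \Hom(W, C(g\comp f)) \ar[r] & \Hom(W, Cg).
  \end{tikzcd}
\end{equation*}
By the hypothesis that $f$ and $g$ are both $n$-connected, $Cf$ and $Cg$ lie in $\cat C_{w\geq n+1}$, so orthogonality forces the outer two terms to vanish; exactness then gives $\Hom_{h\cat C}(W, C(g\comp f))=0$. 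Since this holds for every $W\in \cat C_{w\leq n}$, the characterization in Proposition~\ref{mappingprop} yields $C(g\comp f)\in \cat C_{w\geq n+1}$, i.e.\ $g\comp f$ is $n$-connected.

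There is no real obstacle here: the statement is essentially that $\cat C_{w\geq n+1}$ is closed under extensions, which is immediate from the orthogonality axiom once the correct cofiber sequence is in hand. The only mild subtlety is producing that cofiber sequence, but the pushout-pasting argument above handles it cleanly and is exactly the $\infty$-categorical incarnation of the octahedral axiom on $h\cat C$.
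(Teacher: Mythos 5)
Your proof is correct and follows essentially the same route as the paper: both produce the cofiber sequence $Cf \to C(g\comp f) \to Cg$ by pasting pushout squares and then conclude that $C(g\comp f)$ lies in $\cat C_{w\geq n+1}$ because that subcategory is closed under extensions. The only difference is that you spell out the extension-closure step via orthogonality and Proposition~\ref{mappingprop}, where the paper leaves it implicit; that is a welcome bit of extra care, not a divergence.
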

\begin{proof}
  Say $f:X\to Y$ and $g:Y\to Z$ are $n$-connected. Write $D$ for the
  given pushout in the following diagram.
  \begin{equation*}
    \begin{tikzcd}
      X \ar[r, "f"] \ar[d] \po & Y \ar[d] \ar[r, "g"] \po & Z \ar[d] \\
      * \ar[r] & Cf_{w\geq n+1} \ar[r] \ar[d] \po & D \ar[d] \\
      & * \ar[r] & Cg_{w\geq n+1}
    \end{tikzcd}
  \end{equation*}
  here $Cf$ and $Cg$ are the respective cofibers and $D$ is evidently
  the cofiber of the composite $g\comp f$. Since the top and outer
  squares on the right of the diagram are pushouts, so is the lower
  square. The lower square induces a distinguished triangle $Cf_{w\geq
    n+1} \to D \to Cg_{w\geq n+1}$ in $h\cat C$ and thus $D$ lies in
  $\cat C_{w\geq n+1}$.
\end{proof}

This proposition implies that a weight decomposition at degree
$k\leq n$ is guaranteed to yield a degree-$k$ decomposition for $Y$ as
well after composing with an $n$-connected map $f:X\to Y$.
 
\begin{proposition}
  If the weight structure on $\cat C$ is bounded, any $n$-connected map
  $f:X\to Y$ factors
  \begin{equation*}
    \begin{tikzcd}
      X = X_n \ar[r] & X_{n+1} \ar[r] & \cdots \ar[r] X_m \ar[r,
      "\simeq"] & Y
    \end{tikzcd}
  \end{equation*}
  with $X_{k} / X_{k-1}$ in $\cat C_{w=k}$ for $n+1 \leq k \leq m$.
\end{proposition}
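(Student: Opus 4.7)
The plan is to build a cellular filtration of the cofiber $Cf$ using weight decompositions, then pull it back along $Y\to Cf$ to produce the factorization of $f$. Since $f$ is $n$-connected, $Cf \in \cat C_{w\geq n+1}$, and boundedness supplies some $m$ with $Cf \in \cat C_{w\leq m}$. For each $k$ with $n \leq k \leq m$, I fix a weight decomposition $A_k \to Cf \to B_{k+1}$ at degree $k$, so $A_k \in \cat C_{w\leq k}$ and $B_{k+1}\in \cat C_{w\geq k+1}$. At $k=n$, orthogonality forces the map $A_n\to Cf$ to vanish, so I take $A_n = 0$ and $B_{n+1} \simeq Cf$; at $k=m$, the trivial-decomposition corollaries stated just before Proposition \ref{prop2} give $A_m \simeq Cf$ and $B_{m+1} \simeq 0$. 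Iterating Corollary \ref{cor1} then assembles a compatible tower $0 = A_n \to A_{n+1}\to \cdots \to A_m = Cf$ sitting over $\id_{Cf}$.

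I then analyze the successive cofibers of this tower using the octahedral axiom on $A_k \to A_{k+1} \to Cf$, which produces a triangle $A_{k+1}/A_k \to B_{k+1} \to B_{k+2}$, equivalently $\Sigma^{-1}B_{k+2} \to A_{k+1}/A_k \to B_{k+1}$. Both outer terms lie in $\cat C_{w\geq k+1}$, and $\cat C_{w\geq k+1}$ is closed under extensions by a direct orthogonality argument from Proposition \ref{mappingprop}, so $A_{k+1}/A_k \in \cat C_{w\geq k+1}$. Dually, the cofiber sequence $A_k \to A_{k+1} \to A_{k+1}/A_k$ has both $A_k$ and $A_{k+1}$ in $\cat C_{w\leq k+1}$, so $A_{k+1}/A_k \in \cat C_{w\leq k+1}$ by the same reasoning, giving $A_{k+1}/A_k \in \cat C_{w=k+1}$. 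To descend to $f$ itself, set $X_k \coloneq Y \times_{Cf} A_k$: pulling the cofiber sequence $X \to Y \to Cf$ back along $A_k \to Cf$ yields $X \to X_k \to A_k$, so $X_n = X$ and $X_m = Y$, and comparing successive pullback squares gives $X_{k+1}/X_k \simeq A_{k+1}/A_k \in \cat C_{w=k+1}$.

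The main obstacle is coherently assembling the tower $A_\bullet$: Corollary \ref{cor1} only guarantees existence of filler maps in the homotopy category (with uniqueness when the degree gap is at least two), so some care is needed to choose compatible representatives in $\cat C$ itself rather than merely in $h\cat C$. Once the tower is built, the weight-theoretic identification of the subquotients via the octahedral axiom, together with the extension-closure of the weight categories, carries the substantive content of the argument.
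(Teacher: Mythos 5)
Your argument is correct, but it is organized differently from the paper's. You build the entire cellular filtration of the cofiber $Cf$ at once --- fixing weight decompositions $A_k \to Cf \to B_{k+1}$ at every degree, gluing them into a tower $0=A_n \to \cdots \to A_m \simeq Cf$ via Corollary \ref{cor1}, identifying $A_{k+1}/A_k \in \cat C_{w=k+1}$ by the octahedral axiom plus extension-closure of the weight classes --- and then transport the filtration to $Y$ by setting $X_k = Y\times_{Cf} A_k$, using that composed pullback squares identify $X_{k+1}/X_k$ with $A_{k+1}/A_k$. The paper instead runs a stepwise induction on connectivity: it only decomposes the \emph{current} cofiber at its bottom degree, attaches those cells to $X_k$ by forming the cofiber of $\Sigma^{-1}A_{k+1}\to X_k$, checks that the new map $X_{k+1}\to Y$ is $(k+1)$-connected (its cofiber being the top piece $B_{k+2}$ of the decomposition), and repeats until boundedness forces termination. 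Your route is essentially ``apply Proposition \ref{cell-filts-exist} to $Cf$, then pull back,'' which is conceptually cleaner and makes the weight bookkeeping on subquotients very transparent; its cost is exactly the coherence issue you flag, namely lifting the tower and its cone to $Cf$ from $h\cat C$ to $\cat C$. That issue is real but benign here: for a finite linear tower one may choose representatives of the successive maps $A_k\to A_{k+1}$ in $\cat C$ and extend along the (inner anodyne) spine inclusion, defining each structure map $A_k\to Cf$ as the composite up to $A_m\simeq Cf$; Corollary \ref{cor1} guarantees these composites agree in $h\cat C$ with the original decomposition maps, so the cofibers are still the $B_{k+1}$. The paper's induction sidesteps this entirely by only ever lifting one map at a time, at the price of re-decomposing a new cofiber at every stage.
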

\begin{proof}
  We will induct up until the cofiber must be concentrated in weight
  $w=m$ due to boundedness of the weight structure. The induction
  essentially proceeds by providing a cellular filtration for $Cf$
  (see section \ref{cellfiltrations}). Fix a diagram in $\cat C$ for
  the cofiber sequence for $Cf$.
  \begin{equation*}
    \begin{tikzcd}
      X \ar[r, "f"] \ar[d] \po & Y \ar[d, "g"] \ar[r] \po & 0' \ar[d] \\
      0 \ar[r] & Cf \ar[r, "h"] & \Sigma X
    \end{tikzcd}
  \end{equation*}
  We will write $X_n=X$ to start the induction as $Cf$ has weight
  $w \geq n+1$ by assumption.

  Fix $A_k\to Cf \to B_{k+1}$ weight decompositions at $k$ for all
  $k$. Proposition \ref{prop2} tells us that $A_k$ lives in weight
  $n+1 \leq w \leq k$.  In particular, $A_{n+1}$ lives in
  $\cat C_{w=n+1}$. Fix a lift of the map $a:A_{n+1}\to Cf$ to
  $\cat C$. Form $X_{n+1}$ as the cofiber of the composite
  \begin{equation*}
    \begin{tikzcd}
      \Sigma^{-1} A_{n+1} \ar[r, "\Sigma^{-1} a"] & \Sigma^{-1} Cf \ar[r, "\Sigma^{-1} h"]
      & X_n
    \end{tikzcd}
  \end{equation*}
  By construction, the cofiber of the map $X_n \to X_{n+1}$ will be
  equivalent to $A_{n+1}$ which is in $\cat C_{w=n+1}$ as desired. It
  remains to show that there is an $(n+1)$-connected map $X_{n+1}\to
  Y$ to complete the induction.

  The composite $\Sigma^{-1} A_{n+1}\to Y$ is homotopic to the zero map
  because it factors through two consecutive maps in a cofiber
  sequence.
  \begin{equation*}
    \begin{tikzcd}
      \Sigma^{-1} A_{n+1} \ar[r] & \Sigma^{-1} Cf \ar[r] \ar[rr, bend right=50,
      ""{name=U, above}, "0"'] & X_n \ar[r] \ar[Rightarrow, to=U] & Y
    \end{tikzcd}
  \end{equation*}
  
  Thus $Y$ admits a map from $X_n$. This leads us to consider the
  following diagram.
  \begin{equation*}
    \begin{tikzcd}
      X_n \ar[r] \ar[d] & X_{n+1} \ar[r]\ar[d] & Y \ar[d] \\
      0 \ar[r] & A_{n+1} \ar[r] \ar[d] & Cf \ar[d] \\
      & 0 \ar[r] & Cf_{n+1}
    \end{tikzcd}
  \end{equation*}
  We know that the outer upper square is a pushout along with the
  upper left. This implies that the upper right one is as well. We
  form the lower square as the cofiber of the map $A_{n+1}\to Cf$. The
  outer right square is thus also a pushout and identifies the lower
  right square as the cofiber of $f_{n+1}:X_{n+1}\to Y$. The lower
  right square now tells us that $Cf_{n+1}$ lives in
  $\cat C_{w\geq n+2}$ as desired. The relevant cofiber sequence with
  weights marked is indicated below.
  \begin{equation*}
    \begin{tikzcd}
      (A_{n+1})_{w=n+1}\ar[r] & (Cf)_{w\geq n+1} \ar[r] & Cf_{n+1}
      \ar[r] & (\Sigma A_{n+1})_{w=n+2}
    \end{tikzcd}
  \end{equation*}
\end{proof}

\section{Bounded cell complexes}\label{cellfiltrations}
In this section, we define cellular weight filtrations and develop
some of their properties. The proof of our main theorem relies on
careful manipulation of these cellular filtrations. Throughout, we
will assume $\cat C$ is a stable $\oo$-category, viewed as a
Waldhausen $\oo$-category equipped with the maximal pair structure,
and $w$ is a bounded weight structure on $\cat C$.

\subsection{Definitions}

In preparation for the proof of the main theorem, we study
an ancillary object: the $\oo$-category of bounded cell complexes in
$\cat C$.
\begin{definition}
  Suppose $\cat C$ is a stable Waldhausen $\oo$-category. A
  \emph{relative cell complex} in $\cat C$ is a functor
  $A:(N\Z)^\sharp \to \cat C$ of Waldhausen $\oo$-categories so that
  any quotient $A_i/A_{i-1}$ is in $\cat C_{w=i}$. $\lim_\Z$ and
  $\colim_\Z$ define functors from the category of relative cell
  complexes to $\cat C$. A \emph{cell complex} will be a relative cell
  complex which $\lim_\Z$ takes to a zero object of $\cat C$. Write
  $\cell\cat C \subset \Fun_{\Wald_\oo}((N\Z)^\sharp, \cat C)$ for the
  full $\oo$-subcategory of cell complexes in $\cat C$.  We will write
  $A_{\oo}$ for $\colim_\Z A$ and $A_{-\oo}$ for $\lim_\Z A$ and will
  say that $A$ \emph{is a filtration for} $A_\oo$.
\end{definition}
By definition, all the morphisms $A_n \to A_m$ in the diagram for a
cell complex $A$ are ingressions in $\cat C$. Furthermore, two cell
complexes $A_\bullet$ and $B_\bullet$ in $\cell\cat C$ are equivalent
if there is a map between them that restricts levelwise to
equivalences in $\cat C$, \ie levelwise these edges must lie in
$i\cat C$.

Let $i_{\leq n}:\Z_{\leq n} \to \Z$ be the inclusion of the poset of
integers $\leq n$. $i_{\leq n}$ induces a functor
$i^*_{\leq n} :\Fun_{\Wald_\oo}((N\Z)^\sharp, C) \to
\Fun_{\Wald_\oo}((N\Z_{\leq n})^\sharp, C)$
which admits a left adjoint. The adjoint is induced by the map
$p_{\leq n}:\Z \to \Z_{\leq n}$ which is the identity on $\Z_{\leq n}$
and collapses all larger integers to $n$. Write $\tr_{n}$ for the
composite $i^*_{\leq n}\comp p^*_{\leq n}$, the degree-$n$ truncation
of a (relative) cell complex.  Likewise
$i_{\geq n}:\Z_{\geq n} \to \Z$ induces a functor on relative cell
complexes which admits a right adjoint induced by the map
$p_{\geq n}:\Z\to \Z_{\geq n}$ which is the identity $\geq n$ and
collapses all integers below $n$ to $n$. Write $\cotr_n$ for the
composite $p_{\geq n}^*\comp i^*_{\geq n}$, the degree-$n$
cotruncation.

\begin{definition}
  We call a (relative) cell complex $A$ \emph{bounded} if
  $A \simeq \tr_n A \simeq \cotr_m A$ for some finite $n$ and
  $m$. Write $\cell^\bdd\cat C$ for the full $\oo$-subcategory of
  $\cell\cat C$ on bounded cell complexes. If a bounded complex $A$ is
  equivalent to its $n$-truncation $\tr_n A \simeq A$, then we say that
  $A$ has \emph{degree $\leq n$}. Write $\cell_n^\bdd\cat C$ for the
  full $\oo$-subcategory of $\cell^\bdd\cat C$ on degree $\leq n$
  cell complexes.
\end{definition}
A cell complex $A$ with $A\simeq \cotr_n A$ must have
\begin{equation*}
  \lim_\Z A \simeq \lim_Z \cotr_n A \simeq A_n
\end{equation*}
as a zero object. %
Thus, bounded cell complexes are finite-stage cellular constructions
in the weight structure on $\cat C$ that begin with a zero object.
The subcategories $\cell_n^\bdd \cat C$ filter $\cell^\bdd \cat C$.
That is, under the inclusion maps
$\colim_n \cell_n^\bdd \cat C \simeq \cell^\bdd \cat C$.

\begin{proposition}
  If $A$ is a bounded cell complex in $\cat C$, then $A_i$ is in
  $\cat C_{w\leq i}$ for all $i$. If $A_n$ is in $\cat C_{w \geq n}$
  then $A_{n-1}$ is in $\cat C_{w\geq n-1}$ as well. In particular, if
  $A\in \cell_n^\bdd \cat C$ and $A_\oo \in \cat C_{w=n}$ then
  $A_i\in \cat C_{w=i}$ for all $i$.
\end{proposition}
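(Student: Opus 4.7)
The plan is to prove the three assertions in sequence. For the first claim (that $A_i \in \cat C_{w\leq i}$), I will induct upward on $i$; for the second (that $A_n \in \cat C_{w\geq n}$ implies $A_{n-1} \in \cat C_{w\geq n-1}$), I will rotate the cell cofiber sequence backward; and for the ``in particular'' claim, I will run a downward induction combining the previous two. In each step, the orthogonality characterization of Proposition~\ref{mappingprop} is the principal tool.

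For the first assertion, since $A$ is bounded there is some $m$ with $A \simeq \cotr_m A$; combined with the fact that $\lim_\Z A$ is a zero object (part of the definition of a cell complex), this forces $A_i \simeq 0$ for $i \leq m$ and provides the base case. For the inductive step I would appeal to the cofiber sequence $A_{i-1} \to A_i \to A_i/A_{i-1}$: the inductive hypothesis gives $A_{i-1} \in \cat C_{w\leq i-1} \subseteq \cat C_{w\leq i}$, and the definition of a cell complex gives $A_i/A_{i-1} \in \cat C_{w=i} \subseteq \cat C_{w\leq i}$. To conclude $A_i \in \cat C_{w\leq i}$, I would test against an arbitrary $Y \in \cat C_{w\geq i+1}$: the long exact sequence obtained by applying $\pi_0 h\cat C(-, Y)$ to the cofiber sequence has vanishing outer terms by orthogonality, which forces $\pi_0 h\cat C(A_i, Y) = 0$, so Proposition~\ref{mappingprop} places $A_i$ in $\cat C_{w\leq i}$.

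For the second assertion, I rotate the $n$-th cell cofiber sequence to obtain $\Sigma^{-1}(A_n/A_{n-1}) \to A_{n-1} \to A_n$. The left term lies in $\cat C_{w=n-1} \subseteq \cat C_{w\geq n-1}$ since $A_n/A_{n-1} \in \cat C_{w=n}$, and the right term lies in $\cat C_{w\geq n} \subseteq \cat C_{w\geq n-1}$ by hypothesis. An identical orthogonality argument, this time testing against $W \in \cat C_{w\leq n-2}$ via $\pi_0 h\cat C(W, -)$, forces $\pi_0 h\cat C(W, A_{n-1}) = 0$ and hence places $A_{n-1}$ in $\cat C_{w\geq n-1}$ by Proposition~\ref{mappingprop}. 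The ``in particular'' claim then follows by downward induction starting at $i = n$: because $A \simeq \tr_n A$, we have $A_n \simeq A_\oo \in \cat C_{w=n}$; given $A_i \in \cat C_{w=i}$, the second assertion supplies $A_{i-1} \in \cat C_{w\geq i-1}$ and the first supplies $A_{i-1} \in \cat C_{w\leq i-1}$, whence $A_{i-1} \in \cat C_{w=i-1}$. The main subtlety is keeping the shifted orthogonality indices straight when rotating cofiber sequences; once that bookkeeping is set up, the rest is formal.
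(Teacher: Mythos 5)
Your proposal is correct and follows essentially the same route as the paper: an upward induction from the zero objects forced by boundedness for the first claim, a rotation of the cell cofiber sequence $A_{n-1}\to A_n \to A_n/A_{n-1}$ for the second (your inline orthogonality argument is exactly the content of Proposition~\ref{PropGeqCofibers}, which the paper cites instead), and a downward induction from $n$ for the last claim. The index bookkeeping you flag is handled correctly throughout.
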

\begin{proof}
  Induct up the filtration starting with a zero object $A_{\leq -N}$
  in $\cat C_{\leq -N}$ as in \S\ref{basicprops}. For the second part
  of the proposition, use proposition \ref{PropGeqCofibers} for the
  cofiber sequence $A_n \to A_n/A_{n-1} \to \Sigma A_{n-1}$. The final
  statement follows by induction down from $n$.
\end{proof}

\begin{proposition}\label{cell-filts-exist} %
  If $\cat C$ is a stable $\oo$\hyp{}category equipped with a bounded
  and non\hyp{}degenerate weight structure, then every object in $\cat
  C$ admits a bounded cellular filtration.
\end{proposition}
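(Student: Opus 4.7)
The plan is to induct on the ``width'' of the weight bounds of $X$. Boundedness of $w$ furnishes integers $n, m \geq 0$ with $X \in \cat C_{w\leq n} \cap \cat C_{w\geq -m}$; write $d \coloneq n + m$. A preliminary observation I would record is that $\cat C_{w\geq k}$ is closed under extensions: given a cofiber sequence $A \to B \to C$ with $A, C \in \cat C_{w\geq k}$, Proposition \ref{mappingprop} combined with the long exact sequence produced by $h\cat C(W, -)$ for any $W \in \cat C_{w\leq k-1}$ forces $h\cat C(W, B)=0$, so $B \in \cat C_{w\geq k}$.

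The base case $d=0$ puts $X \in \cat C_{w=n}$, for which the filtration that is $0$ in degrees $<n$ and constantly $X$ in degrees $\geq n$ is a bounded cell complex. For the inductive step $d \geq 1$, I would pick a weight decomposition $X' \to X \to X''$ at degree $n-1$. Proposition \ref{PropCofibBound} (applied with $k=n-1$, using $X \in \cat C_{w\leq n}$) identifies $X'' \in \cat C_{w=n}$. Rotating to the cofiber sequence $\Omega X'' \to X' \to X$, extension closure places $X'$ in $\cat C_{w\geq -m}$, since $\Omega X'' \in \cat C_{w\geq n-1}\subseteq \cat C_{w\geq -m}$ (the inclusion using $d \geq 1$) and $X\in \cat C_{w\geq -m}$ by hypothesis. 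Thus $X'\in \cat C_{w\leq n-1}\cap\cat C_{w\geq -m}$ has width $d-1$, and the inductive hypothesis yields a bounded cellular filtration $\{A_i\}$ of $X'$ with $A_i\simeq X'$ for $i\geq n-1$ and $A_i\simeq 0$ for $i\leq -m-1$. I would then extend by setting $A_i = X$ for $i\geq n$, splicing in the cofibration $A_{n-1} = X' \to X = A_n$ from the chosen decomposition; by construction $A_n/A_{n-1}\simeq X''$ lies in $\cat C_{w=n}$, as required.

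The remaining task, and the step that deserves the most care, is to package this inductive data into a coherent functor $(N\Z)^\sharp \to \cat C$ of Waldhausen $\infty$-categories rather than just a tower of homotopy classes. Since the filtration is identically zero for $i\leq -m-1$ and identically $X$ for $i\geq n$, this reduces to building a coherent chain of $d+1$ composable cofibrations in $\cat C$. I would construct this chain by iteratively appending the cofiber sequence of each chosen weight decomposition onto the partial chain, using the universal property of the pushout in $\cat C$ to propagate coherence. The main obstacle is exactly this coherence bookkeeping, but because the chain is finite and each extension is governed by a standard pushout universal property, no new $\infty$-categorical input is required beyond what is already supplied by Barwick's framework.
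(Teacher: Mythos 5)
Your proof is correct and follows essentially the same route as the paper's: peel off the top weight piece via a degree-$(n-1)$ decomposition, use Proposition \ref{PropCofibBound} to see the quotient lies in $\cat C_{w=n}$, observe the lower piece retains the lower weight bound (your extension-closure argument is just Proposition \ref{prop2}), and iterate until the width is exhausted, finally lifting the finite chain from $h\cat C$ to a coherent diagram $N\Z\to\cat C$. The only difference is bookkeeping — you induct on the width $d$ where the paper inducts downward from $N$ — and both treatments of the final coherence step are comparably brief.
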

\begin{proof}
  Suppose $X$ is an object of $\cat C$. Then even if the weight
  structure on $\cat C$ is not bounded, we can fix weight
  decompositions $A_{w\leq n} \to X$ for all $n \in \Z$. Corollary
  \ref{cor1} implies the existence of maps $A_n \to A_m$ in the
  homotopy category for $n\leq m$. These can be lifted to a coherent
  diagram $N\Z \to \cat C$ but for a bounded weight structure this is
  even simpler. In this case, $X$ has weight $-N \leq w\leq N$ for
  some $N \geq 0$. Set $A_i = X$ for $i\geq N$ and use the weight
  decomposition starting with $A_N=X$ to inductively find weight
  decompositions for $A_n$ at weight $n-1$ to get $A_{n-1} \to A_n$.
  Proposition \ref{PropCofibBound} implies that the fiber
  $A_n/A_{n-1}$ is in weight $w=n$ as desired. Each of these maps can
  be lifted from the homotopy category to $\cat C$. For $i \leq -N$,
  $A_i$ is a zero object of $\cat C$ by the non-degeneracy of the
  weight structure. The compositions of these maps and the retraction
  of $\Z$ onto $\Delta^{2N}$ as the interval $[-N,N]$ induce the
  desired functor $N\Z \to \cat C$.
\end{proof}

\subsection{Waldhausen structure on cell complexes}

We want to give $\cell^\bdd \cat C$ and $\cell^\bdd_n\cat C$
compatible Waldhausen $\oo$\hyp{}category structures. This amounts to
selecting ingressive edges. If we think of cell complexes as diagrams
in $\cat C$, an edge in $\cell^\bdd \cat C$ is a diagram
\begin{equation*}
  \begin{tikzcd}
    \cdots \ar[r, tail] & A_{i-1} \ar[d] \ar[r, tail] & A_i \ar[r, tail] \ar[d]
    & A_{i+1} \ar[d] \ar[r, tail] & \cdots \\
    \cdots \ar[r, tail] & B_{i-1} \ar[r, tail] & B_i \ar[r, tail] &
    B_{i+1} \ar[r, tail] & \cdots
  \end{tikzcd}
\end{equation*}
and we have a choice of which edges to make ingressive. Just requiring
that all vertical maps are ingressions in $\cat C$ does not imply that
the induced maps $A_j/A_i \to B_j/B_i$ are ingressions. As noted in
\cite{barwick}*{5.6} and \cite{1126}*{1.1.2}, we need a latching
condition on the diagrams: that for any $i<j$, the map from $A_j
\cup_{A_i} B_j \to B_i$ is an ingression in $\cat C$.
\begin{equation*}
  \begin{tikzcd}
    A_i \ar[r, tail] \ar[d, tail] \po & A_j \ar[d, tail] \ar[ddr, bend left, tail] \\
    B_i \ar[r, tail] \ar[drr, bend right, tail] & A_j \cup_{A_i} B_i
    \ar[dr,
    tail] \\
    & & B_j
  \end{tikzcd}
\end{equation*}
This result follows by considering the following commuting cube.
\begin{equation*}
  \begin{tikzcd}[column sep=small]
    A_i \ar[dd, tail] \ar[rr] \ar[dr, tail] && A_j \ar[dd] \ar[rr, equals] \ar[dr, tail]  && A_j \ar[dd] \ar[dr, tail]  \\
    & B_i \ar[rr, tail, crossing over] && A_j \cup_{A_i} B_i
    \ar[rr, tail, crossing over] &&
    B_j \ar[dd] \\
    * \ar[rr, tail] \ar[dr, equals] && A_j/A_i \ar[rr, equals] \ar[dr,
    equals] &&
    A_j/A_i \ar[dr, tail] \\
    & * \ar[rr, tail] \ar[uu, crossing over, leftarrow] && A_j/A_i
    \ar[rr, tail, dotted] \ar[uu, crossing over, leftarrow] && B_j/B_i
  \end{tikzcd}
\end{equation*}
If $A_j\cup_{A_i} B_i \cof B_j$ is ingressive then so is the dotted edge.

We require further that the cofiber of an ingressive map $A\cof B$ in
$\cell_n^\bdd \cat C$ is also a degree-$n$ bounded cell complex in
$\cat C$. The cofiber is computed levelwise and we want, in
particular, for the cofiber of $B_{i-1}/A_{i-1} \cof B_i/A_i$ to have
weight $w=i$.  This cofiber is identified with the cofiber of the map
$A_i \cup_{A_{i-1}} B_{i-1} \cof B_i$ which we will require to have
weight $w=i$. More generally, an ingression $A\to B$ will be levelwise
ingressions $A_i\cof B_i$ with the map $A_j \cup_{A_i} B_i \cof B_j$
an ingression in $\cat C$ with cofiber of weight $j+1 \leq w \leq i$.

Following Barwick (see \cite{barwick}*{5.6} and the following
discussion), it is easier to define the Waldhausen structure on
$\cell_n^\bdd\cat C$ as follows.
\begin{definition}
  For $i\leq j$, write $e_{i,j} : \Delta^1 \to N\Z$ for the map
  hitting $i$ and $j$.  Let $(\cell^\bdd_n \cat C)_\dagger$ be the
  smallest subcategory spanned by the edges
  $f:\Delta^1 \to \cell^\bdd_n\cat C$, which we will write $A\to B$,
  for which the square $e_{i,j}^*f: \Delta^1 \x \Delta^1 \to \cat C$
  is either of the form
  \begin{equation*}
    \begin{tikzcd}
     A_i \ar[r, tail] \ar[d, tail] \po & A_j \ar[d, tail] \\
     B_i \ar[r, tail] & B_j
    \end{tikzcd}
  \end{equation*}
  where all the edges are ingressive and the square is a pushout
  square in $\cat C$, or of the form
  \begin{equation*}
    \begin{tikzcd}
      A_i \ar[r, tail] \ar[d, "\sim"] & A_j \ar[d, tail] \\
      B_i \ar[r, tail] & B_j
    \end{tikzcd}
  \end{equation*}
  with the cofiber of the map $A_j\cof B_j$ having weight
  $i+1 \leq w \leq j$, where here the left arrow is an equivalence in
  $\cat C$ and the right is an ingression.

  We write $(\cell_n^\bdd \cat C)_\dagger$ for the subcategory of
  ingressions in $\cell_n^\bdd \cat C$ and
  $(\cell^\bdd \cat C)_\dagger$ for the ingressions in
  $\cell^\bdd \cat C$.
\end{definition}
\begin{lemma}\label{cellingressions}
  An edge $f$ of $\cell_n^\bdd \cat C$ is ingressive if and only if for
  any $e_{i,j}:\Delta^1 \to N\Z$ and any diagram $X$ from the pair
  $\oo$-category
  \begin{equation*}
    \begin{tikzcd}
      0 \ar[r, tail] \ar[d] & 1 \ar[d] \ar[ddr, bend left] \\
      2 \ar[r, tail] \ar[drr, bend right, tail] & \oo' \ar[dr] \\
      && \infty
    \end{tikzcd}
  \end{equation*}
  where $X|_{0,1,2,\oo}$ is a pushout square, the marked edges are
  ingressions, and
  $X|_{0,1,2,\oo'} = e_{i,j}^*f : (\Delta^1)^\sharp \x
  (\Delta^1)^\sharp \to \cat C$,
  then $X(\oo) \to X(\oo')$ is an ingression in $\cat C$ with cofiber
  in $\cat C_{i +1 \leq w \leq j}$.
\end{lemma}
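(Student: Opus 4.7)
The plan is to prove both implications of this characterization directly. Write $\ell_{i,j}(f)$ for the latching map $A_j \cup_{A_i} B_i \to B_j$ associated to an edge $f: A \to B$ in $\cell_n^\bdd \cat C$; since $\cat C$ carries the maximal pair structure, the ``ingression'' clause of the lemma is automatic, and the substantive content is the weight constraint $\cofiber(\ell_{i,j}(f)) \in \cat C_{i+1 \leq w \leq j}$. I would establish the forward direction by verifying the condition on the two generating types and then showing closure under composition; the reverse direction would come from an explicit factorization of any $f$ satisfying the latching condition into a composite of generating edges.

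For the forward direction, I first check the generating types. If $e_{i,j}^* f$ is of type 1, then $B_j$ itself realizes the pushout $A_j \cup_{A_i} B_i$, so $\ell_{i,j}(f)$ is an equivalence with trivial cofiber. If $e_{i,j}^* f$ is of type 2, then $A_i \to B_i$ is an equivalence, so $A_j \cup_{A_i} B_i \simeq A_j$ and $\ell_{i,j}(f) \simeq (A_j \to B_j)$, whose cofiber lies in $\cat C_{i+1 \leq w \leq j}$ by the type-2 hypothesis. Closure under composition uses that for a composite $g\comp f:A\to B\to C$,
\begin{equation*}
A_j \cup_{A_i} C_i \simeq (A_j \cup_{A_i} B_i) \cup_{B_i} C_i \to B_j \cup_{B_i} C_i \to C_j
\end{equation*}
factors the latching map of $g \comp f$ through a pushout of $\ell_{i,j}(f)$ along $B_i\to C_i$ (same cofiber) and then through $\ell_{i,j}(g)$. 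The total cofiber sits in an extension of objects of $\cat C_{i+1\leq w \leq j}$, which is closed under extensions by Proposition~\ref{PropGeqCofibers} and the dual statement for $\cat C_{w\leq j}$.

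For the reverse direction, suppose $f$ satisfies the latching condition at every $(i, j)$, and choose $N$ so that $A$ and $B$ are concentrated between degrees $-N$ and $n$. I would define intermediate bounded cell complexes $B^{(k)}$ for $-N\leq k\leq n$ by $B^{(k)}_i = B_i$ when $i\leq k$ and $B^{(k)}_i = A_i \cup_{A_k} B_k$ when $i > k$. The successive quotients collapse either to $B_i/B_{i-1}$ or to $A_i/A_{i-1}$, each in $\cat C_{w=i}$, making $B^{(k)}$ a cell complex of degree at most $n$, with $B^{(-N)} = A$ and $B^{(n)} = B$. The natural maps $B^{(k)} \to B^{(k+1)}$ would then assemble $f$ into the composite $A = B^{(-N)} \to \cdots \to B^{(n)} = B$. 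A case analysis on $(i, j)$ relative to $k, k+1$ shows each restriction $e_{i,j}^*(B^{(k)}\to B^{(k+1)})$ is of one of the two generating forms: when $j\leq k$ or $i > k$, the square is a pushout of common structural data by associativity of pushouts (type 1); when $i \leq k < k+1 \leq j$, the left vertical is the identity on $B_i$ and the right vertical is a pushout of $\ell_{k,k+1}(f)$, whose cofiber lies in $\cat C_{w=k+1} \subseteq \cat C_{i+1\leq w \leq j}$ (type 2).

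The principal obstacle is realizing the sequence $B^{(-N)}\to\cdots\to B^{(n)}$ as an honest diagram in the $\oo$-category $\cell_n^\bdd \cat C$ rather than as data assembled level by level on the homotopy category. Producing the required coherent natural transformations is a standard application of the universal properties of pushouts in the stable $\oo$-category $\cat C$: the pointwise pushouts defining $B^{(k)}_i$ assemble functorially in $i$ into a diagram $N\Z \to \cat C$, and the naturality of the construction in $k$ yields the transformations $B^{(k)} \to B^{(k+1)}$. Once this coherence is in place, verifying that each $B^{(k)}$ lies in $\cell_n^\bdd \cat C$ reduces to the levelwise observations that its structure maps are ingressions under the maximal pair structure on $\cat C$ and that the quotient filtration condition holds, both of which have already been checked.
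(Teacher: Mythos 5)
Your proof is correct and follows essentially the same route as the paper: check the two generating square types directly for the forward direction, and for the converse factor $f$ through the intermediate complexes $B^{(k)}$, which are exactly the paper's levelwise pushouts $\tr_k B \cup_{\tr_k A} A$. Your explicit verification that the latching condition is closed under composition (via the extension-closure of $\cat C_{i+1\leq w\leq j}$) is a step the paper leaves implicit, but it does not change the argument.
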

\begin{proof}
  Ingressions on $\cell_n^\bdd \cat C$ are defined by their
  restrictions along the $e_{i,j}$. The resulting types of squares in
  the definition all admit the desired property: in the first case we
  merely note that zero objects are in every weight and the second
  case the requirement on the vertical map in the square is precisely
  what is required for the map from the pushout. Hence all ingressive
  maps satisfy the lemma.

  For the converse, we can factor any map satisfying the condition
  into a composite of maps satisfying the definition. Say $f:A\to B$
  satisfies the lemma. There is some $k$ so that $\cotr_k A\simeq A$
  and $\cotr_k B\simeq B$. Then $A_k$ and $B_k$ are both zero objects
  in $\cat C$, so the map $\tr_k f: \tr_k A \to \tr_k B$ is an
  equivalence and hence is ingressive in $\cell_n^\bdd \cat C$. Form
  the pushout $\tr_k B \cup_{\tr_k A} A$ levelwise. The map from $A$
  to this pushout is directly an ingression.
  \begin{equation*}
    \begin{tikzcd}
      A_k \ar[r, tail] \ar[d, "\simeq"] \po & A_{k+1} \ar[r, tail] \ar[d, tail] \po & A_{k+2} \ar[r, tail] \ar[d, tail] \po & \cdots \\
      B_k \ar[r, tail] & B_k \cup_{A_k} A_{k+1} \ar[r, tail] & B_k
      \cup_{A_k} A_{k+2} \ar[r, tail] & \cdots
    \end{tikzcd}
  \end{equation*}
  All the squares in this diagram are pushouts by \cite{HTT}*{4.4.2.1},
  and hence the map directly satisfies the definition of
  ingressive. $f$ induces a map
  $\tr_k B \cup_{\tr_k A} A \to \tr_{k+1} B \cup_{\tr_{k+1} A} A$
  which we write below as the second row of maps.
  \begin{equation*}
    \begin{tikzcd}
      A_k \ar[r, tail] \ar[d, "\simeq"] \po & A_{k+1} \ar[r, tail] \ar[d, tail] \po & A_{k+2} \ar[r, tail] \ar[d, tail] \po & \cdots \\
      B_k \ar[r, tail] \ar[d, "\simeq"] & B_k \cup_{A_k} A_{k+1}
      \ar[r, tail] \ar[d, dotted, tail] \po & B_k \cup_{A_k} A_{k+2}
      \ar[r, tail] \ar[d, tail] \po & \cdots \\
      B_k \ar[r, tail] & B_{k+1} \ar[r, tail] & B_{k+1} \cup_{A_{k+1}}
      A_{k+2} \ar[r, tail] & \cdots
    \end{tikzcd}
  \end{equation*}
  Repeated application of \cite{HTT}*{4.4.2.1} demonstrates that the
  marked squares are pushouts, and application of the hypothesis shows
  that the dotted arrow induced by $f$ is ingressive in $\cat C$ and
  has a cofiber of the appropriate weight.  Induction now factors $f$
  as a composite of ingressions
  $A \to \tr_k B \cup_{\tr_k} A \to \cdots \to \tr_n B \cup_{\tr_n A}
  A \simeq B$ as $\tr_n B\simeq B$ and $\tr_n A \simeq A$.
\end{proof}

\begin{proposition}
  The pair $\oo$-category
  $(\cell^\bdd \cat C, (\cell^\bdd \cat C)_\dagger)$ of bounded cell
  complexes and the pair $\oo$-category
  $(\cell_n^\bdd \cat C, (\cell_n^\bdd \cat C)_\dagger)$ of bounded
  and $n$-truncated cell complexes each form a Waldhausen
  $\oo$-category.
\end{proposition}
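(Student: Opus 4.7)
The plan is to verify the axioms of a Waldhausen $\oo$\hyp{}category for the pair $(\cell_n^\bdd \cat C, (\cell_n^\bdd \cat C)_\dagger)$; the case of $\cell^\bdd \cat C$ will follow by compatibility with the filtration $\colim_n \cell_n^\bdd \cat C \simeq \cell^\bdd \cat C$. The pair structure axioms are nearly immediate. Every equivalence $f:A \wto B$ restricts along each $e_{i,j}$ to a square whose two vertical arrows are equivalences in $\cat C$, which fits the second form of an ingression with trivial cofiber (belonging to every weight), so $\iota \cell_n^\bdd \cat C \subseteq (\cell_n^\bdd \cat C)_\dagger$; closure under composition is built into the definition as the ``smallest subcategory.'' The constant diagram at $0 \in \cat C$ is a zero object, and for any $A$, the map $0_\bullet \to A$ satisfies the hypothesis of Lemma \ref{cellingressions}: $X(\oo) \simeq A_i$ and $X(\oo') = A_j$, and the cofiber $A_j/A_i$ lies in $\cat C_{w \geq i+1} \cap \cat C_{w \leq j}$ by iterated application of Proposition \ref{PropGeqCofibers} and its dual to the cell\hyp{}by\hyp{}cell cofiber sequences $A_{k-1} \to A_k \to A_k/A_{k-1}$ for $i < k \leq j$.

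The heart of the argument will be the pushout axiom. Given an ingression $A \cof B$ and an arbitrary map $A \to A'$ in $\cell_n^\bdd \cat C$, I form the candidate pushout levelwise by $C_i \coloneq B_i \cup_{A_i} A'_i$; boundedness and degree $\leq n$ are evidently preserved. To see that $C$ is a cell complex, I must show $C_i/C_{i-1} \in \cat C_{w=i}$. Since colimits commute with colimits in a stable $\oo$\hyp{}category, the induced square
\begin{equation*}
\begin{tikzcd}
A_i/A_{i-1} \ar[r] \ar[d] & A'_i/A'_{i-1} \ar[d] \\
B_i/B_{i-1} \ar[r] & C_i/C_{i-1}
\end{tikzcd}
\end{equation*}
is a pushout in $\cat C$, so $C_i/C_{i-1}$ fits into a cofiber sequence with $A'_i/A'_{i-1}$ and $\cofiber(A_i/A_{i-1} \to B_i/B_{i-1})$. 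A $3 \x 3$ manipulation identifies the latter cofiber with $\cofiber(A_i \cup_{A_{i-1}} B_{i-1} \to B_i)$, which lies in $\cat C_{w=i}$ by the ingression data for $A \cof B$. Since $A'_i/A'_{i-1} \in \cat C_{w=i}$ and both $\cat C_{w \geq i}$ and $\cat C_{w \leq i}$ are closed under extensions (an immediate consequence of Proposition \ref{mappingprop}), $C_i/C_{i-1} \in \cat C_{w=i}$ as required.

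Finally, to check that $A' \to C$ is ingressive, I apply Lemma \ref{cellingressions} for an arbitrary $e_{i,j}$. Pasting pushouts yields $A'_j \cup_{A'_i} C_i \simeq B_i \cup_{A_i} A'_j$ and $C_j \simeq (B_i \cup_{A_i} A_j) \cup_{A_j} A'_j$, so the map $A'_j \cup_{A'_i} C_i \to C_j$ is the base change of $B_i \cup_{A_i} A_j \to B_j$ along $A_j \to A'_j$. Since pushouts preserve cofibers, these two maps share a common cofiber, which lies in $\cat C_{i+1 \leq w \leq j}$ by the ingression hypothesis on $A \cof B$; Lemma \ref{cellingressions} then certifies $A' \to C$ as ingressive. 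The hardest step throughout is the weight computation for $C_i/C_{i-1}$, specifically the $3 \x 3$ identification of $\cofiber(A_i/A_{i-1} \to B_i/B_{i-1})$ with the cofiber measuring the ingression condition on $A \cof B$; once this is in hand the rest is formal. The unbounded case follows because every ingredient respects the inclusions $\cell_n^\bdd \cat C \hookrightarrow \cell_{n+1}^\bdd \cat C$.
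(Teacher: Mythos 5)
Your proposal is correct and follows essentially the same route as the paper: reduce to $\cell_n^\bdd\cat C$ via the colimit, form the pushout levelwise, commute colimits to identify the relevant cofiber with $\cofiber(A_j\cup_{A_i}B_i\to B_j)$ controlled by the ingression hypothesis, and invoke closure of weights under extension. Your verification of the latching condition for $A'\to C$ by exhibiting it as a cobase change is the same observation the paper makes (somewhat more tersely) when it notes the analysis is model-independent.
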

\begin{proof}
  As $\cell^\bdd \cat C$ is the colimit of the subcategories
  $\cell_n^\bdd \cat C$ and the same is true for the ingressions, it
  suffices to check that
  the pair $(\cell_n^\bdd\cat C, (\cell_n^\bdd \cat C)_\dagger)$ form a
  Waldhausen $\oo$-category.

  As zero objects in $\cat C$ are in all weights, the constant diagram
  at a zero object in $\cat C$ forms a zero object in
  $\cell_n^\bdd \cat C$. For any $A$ in $\cell_n^\bdd\cat C$, the
  cofiber of the map $A_i\to A_j$ has weight $i+1 \leq w \leq j$, so
  the map $0\to A$ satisfies the lemma above and hence is ingressive.

  Now suppose $A\cof B$ is an ingression in $\cell_n^\bdd \cat C$ and
  $A\to C$ is an arbitrary map. The pushout in diagrams $B \cup_A C$
  is formed levelwise with $(B\cup_A C)_i = B_i\cup_{A_i} C_i$. The
  maps $B_i \cup_{A_i} C_i \to B_j \cup_{A_j} C_j$ are ingressions in
  $\cat C$. As equivalences are checked levelwise, $\tr_n ( B\cup_A C)
  \simeq \tr_nB \cup_{\tr_n A} \tr_n C$ is equivalent to $B\cup_A
  C$. Since the same holds for cotrunctation, if $B \cup_A C$ is a
  cellular complex it will lie in $\cell_n^\bdd \cat C$. It remains to
  show that $B\cup_A C$ is a cellular complex and the map $C\to
  B\cup_A C$ is an ingression in $\cell_n^\bdd \cat C$. Both amount to
  checking that certain

  Write $D$ for the pushout $B\cup_A C$. As pushouts commute,
  $D_j/D_i \simeq (B_j/B_i)\cup_{A_j/A_i} (C_j/C_i)$.
  \begin{equation*}
    \begin{tikzcd}
      A_j/A_i \ar[r, tail] \ar[d] \po & B_j/B_i \ar[d] \\
      C_j/C_i \ar[r, tail] & D_j/D_i
    \end{tikzcd}
  \end{equation*}
  Hence, the cofiber of the top and bottom map are equivalent in
  $h\cat C$, so $$(D_j/D_i) / (C_j/C_i) \simeq (B_j/B_i) / (A_j/A_i)$$ 
  which is equivalent to $B_j / (A_j\cup_{A_i} B_i)$ by commuting
  pushouts again. The latter is in weight $i+1 \leq w \leq j$ by
  assumption on the map $A\to B$. We note that since weights (both
  $\cat C_{w\geq i+1}$ and $\cat C_{w\leq j}$) are closed under
  extension by definition, the cofiber sequence
  $C_j/C_i \cof D_j/D_i \to (D_j/D_i)/(C_j/C_i)$ now shows that
  $D_j/D_i$ also has weight $i+1\leq w \leq j$ as desired. Hence, $D$
  lies in $\cell^\bdd_n\cat C$ and the map $C\to D$ is an
  ingression. We note that this analysis did not require the
  particular model of $D$ as the levelwise pushout, so we also
  conclude that any pushout of an ingression in $\cell_n^\bdd \cat C$
  is also an ingression.
\end{proof}

\subsection{Localizing cell complexes}

By construction, the equivalences in $\cell \cat C$ are those maps
which induce equivalences in $\cat C$ degreewise. This is too rigid:
two cell complexes are only equivalent if all the $n$-skeleta are
equivalent. We would like to make all cell complexes for a single
object in $\cat C$ equivalent to each other.

We regard the functor $\colim_\Z$ as taking a cell complex to the
object in $\cat C$ it models. We are primarily interested in bounded
complexes, which are filtered by the subcategories
$\cell_n^\bdd \cat C$ of complexes $A$ that precisely carry the data
of a cellular filtration for $A_\oo \simeq A_n$.

Denote by $v\cell \cat C$ (or $v\cell^\bdd\cat C$ or
$v\cell_n^\bdd \cat C$) the subcategory of $\cell\cat C$ which
$\colim_\Z$ takes to equivalences in $\cat C$. We'd like to localize
the bounded cell complexes at $v\cell^\bdd \cat C$ which will require
Barwick's labeled Waldhausen $\oo$-categories \cite{barwick}*{2.9}.
The virtual Waldhausen $\oo$-category given by $\cell^\bdd \cat C$
labeled by $v\cell^\bdd\cat C$ will be our surrogate for $\cat C$.

In the proof of the main theorem, we compare the $K$-theory of the
localization
\begin{equation*}
  (v\cell^\bdd \cat C)\inv \cell^\bdd \cat C
\end{equation*}
to that of the unlocalized cell complexes $\cell^\bdd \cat C$. The
following result compares this directly to $K(\cat C)$.
\begin{proposition}\label{localization_prop}
  The $K$-theory of the localization is equivalent to that of $\cat C$.
  \begin{equation*}
    K((v\cell^\bdd \cat C)\inv \cell^\bdd \cat C) \simeq K(\cat C)
  \end{equation*}
\end{proposition}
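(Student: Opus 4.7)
The plan is to exhibit a comparison between $(v\cell^\bdd \cat C)\inv \cell^\bdd \cat C$ and $\cat C$ that is an equivalence of Waldhausen $\oo$-categories, whence the claim follows from the invariance of $K$-theory under such equivalences. The candidate comparison is the exact functor $\Phi \coloneq \colim_\Z : \cell^\bdd \cat C \to \cat C$ (with $\cat C$ carrying the maximal pair structure), which sends $v\cell^\bdd \cat C$ to equivalences by construction and hence descends to an exact functor $\bar\Phi : (v\cell^\bdd \cat C)\inv \cell^\bdd \cat C \to \cat C$.

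Essential surjectivity of $\bar\Phi$ is immediate from Proposition \ref{cell-filts-exist}: every object $X \in \cat C$ is realized as $\Phi(A)$ for some bounded cell filtration $A$. For the mapping-space comparison I would show that any morphism $f : X \to Y$ in $\cat C$, with $X = \Phi(A)$ and $Y = \Phi(B)$ fixed, is represented in the localization by a zigzag $A \xleftarrow{\sim} A' \to B$ in which $A' \to A$ lies in $v\cell^\bdd \cat C$ and $A' \to B$ is a map of cell complexes projecting to $f$. The cell complex $A'$ is built inductively: at stage $i$ the weight decomposition of $B$ at degree $i$ induces, via Lemma \ref{lem1} and Corollary \ref{cor1}, a compatible degree-$i$ weight decomposition of $X$ through which $f$ factors; boundedness of $w$ ensures this terminates in finitely many stages. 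The resulting comparison $A' \to A$ is then a levelwise equivalence in $\cat C$ by the essential uniqueness of weight decompositions, hence lies in $v\cell^\bdd \cat C$. Exactness of $\bar\Phi$ (zero objects, ingressions, pushouts) follows from the corresponding properties of $\Phi$ together with the definition of ingressions in $\cell^\bdd \cat C$.

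The main obstacle is coherently assembling the skeletal liftings provided by Corollary \ref{cor1} into honest diagrams $N\Z \to \cat C$, rather than merely compatible systems in the triangulated category $h\cat C$: the maps between weight decompositions are unique in $h\cat C$ only when their degrees differ strictly, so the inductive construction requires care to produce the coherence data of a functor from $(N\Z)^\sharp$. A cleaner alternative is to bypass direct comparison of mapping spaces and instead apply Barwick's approximation theorem \cite{barwick} to the labeled Waldhausen $\oo$-category $(\cell^\bdd \cat C, v\cell^\bdd \cat C)$, whose $K$-theory computes the $K$-theory of the localization by the formalism of labeled structures. In that formulation the inductive lifting above becomes exactly the verification of the approximation hypothesis for $\Phi$, and the conclusion $K(\cell^\bdd \cat C, v\cell^\bdd \cat C) \simeq K(\cat C)$ follows directly.
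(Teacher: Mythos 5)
Your overall strategy---compare the localization to $\cat C$ via the functor induced by $\colim_\Z$ and invoke an approximation theorem---is the same as the paper's, but the executions differ in ways that matter. The paper applies Fiore's approximation theorem directly to $F:(v\cell^\bdd\cat C)\inv\cell^\bdd\cat C\to\cat C$, checking three hypotheses: (i) essential surjectivity (Proposition \ref{cell-filts-exist}); (ii) that $F$ reflects equivalences (true by construction of $v\cell^\bdd\cat C$); and (iii) that $F$ preserves colimits of diagrams indexed by finite posets, which holds because such colimits are computed levelwise and stabilize at a finite stage. By \cite{fiore}*{4.5} these conditions imply that $F$ induces an equivalence of homotopy categories on the subcategories of ingressions, and \cite{fiore}*{4.10} then yields the $K$-theory equivalence. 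Crucially, this bypasses both the mapping-space comparison and the zigzag-lifting induction you propose.

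Two points in your write-up are genuine gaps rather than stylistic differences. First, your preferred argument---showing $\bar\Phi$ is an equivalence of Waldhausen $\oo$-categories---would require full faithfulness, and the zigzag construction you sketch at best establishes a form of fullness at the level of $h\cat C$; you correctly flag the coherence problem of promoting $h\cat C$-level comparisons of weight decompositions to functors out of $(N\Z)^\sharp$, but even granting that, nothing in the sketch controls the higher homotopy of mapping spaces. The paper never proves (or needs) that $\bar\Phi$ is fully faithful. Second, your fallback route through the labeled pair $(\cell^\bdd\cat C, v\cell^\bdd\cat C)$ silently assumes that the $K$-theory of the labeled Waldhausen $\oo$-category agrees with the $K$-theory of the localization; this is not automatic from the formalism---it is precisely Claim \ref{claim2} of the paper and receives its own proof via \cite{barwick}*{10.15}. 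Moreover, ``the approximation hypothesis for $\Phi$'' is left unspecified: the classical condition (factoring an arbitrary map $\Phi(A)\to Y$ through an ingression in the source followed by a weak equivalence) would still require the cell-complex constructions whose coherence you flagged. The efficient path is the paper's: keep the target of the approximation theorem as the localization itself and verify Fiore's hypotheses, none of which require constructing maps of cell complexes over a given map in $\cat C$.
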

\begin{proof}
  Not every map in $(v\cell^\bdd \cat C)\inv \cell^\bdd \cat C$ is
  necessarily ingressive. However, we can apply Fiore's approximation
  theorem \cite{fiore} in this situation. Write $F$ for the functor
  induced by $\colim_\Z$ from the localization $(v\cell^\bdd \cat
  C)\inv \cell^\bdd \cat C$ to $\cat C$. By proposition
  \ref{cell-filts-exist}, $F$ is essentially surjective. By
  construction of $v\cell^\bdd \cat C$, $F$ reflects equivalences in
  $\cat C$. Finally, any diagram indexed by a finite poset in the
  maximal Kan subcategory of $(v\cell^\bdd \cat C )\inv \cell^\bdd
  \cat C$ admits a colimit in $(v\cell^\bdd \cat C )\inv \cell^\bdd
  \cat C$ which can be constructed as the colimit of a diagram indexed
  on a finite poset in $\cell^\bdd \cat C$ where all maps induce
  equivalences on $\colim_\Z$. These colimits are constructed
  levelwise and since the poset is finite there is some $N$ where all
  terms achieve $\colim_\Z$, so we directly see that $\colim_\Z$
  preserves those colimits. Hence $F$ satisfies the hypotheses of
  \cite{fiore}*{4.5} to show that it induces an equivalence of
  homotopy categories on the subcategories of ingressions. The
  approximation theorem \cite{fiore}*{4.10} now implies that $F$
  induces an equivalence on $K$-theory.
\end{proof}

\section{The main theorem}\label{spheretheorem}
This section provides the proof of theorem \ref{spherethm} and a
description of $K(\cat C_{\heart w})$ analogous to the
``plus-equals-$Q$'' theorem. Throughout, we let $\cat C$ denote a
fixed stable $\oo$-category equipped with a bounded weight structure
$w$.

\begin{theorem}\label{spherethm}
  If $\cat C$ is a stable $\oo$-category equipped with a bounded
  non-degenerate weight structure $w$, then the inclusion of the heart
  of the weight structure
  $\cat C_{\heart w}\hookrightarrow \cat C_{\heart}$ induces an
  equivalence on algebraic $K$-theory
  \begin{equation*}
    K(\cat C) \simeq K(\cat C_{\heart w}).
  \end{equation*}
\end{theorem}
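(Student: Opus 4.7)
\textbf{Proof plan for Theorem \ref{spherethm}.}

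The plan is to factor the inclusion $\cat C_{\heart w} \hookrightarrow \cat C$ through the Waldhausen $\oo$-category of bounded cell complexes. By Proposition \ref{localization_prop}, $K(\cat C)$ is already computed by the labeled Waldhausen $\oo$-category $(\cell^\bdd \cat C, v\cell^\bdd \cat C)$, and the inclusion factors as $\cat C_{\heart w} \to \cell^\bdd \cat C \xrightarrow{\colim_\Z} \cat C$ where the first functor sends a heart object $X$ to the cell complex concentrated in degree $0$ (the filtration $\cdots \to 0 \to X \xrightarrow{=} X \to \cdots$). It suffices to show that the induced map $K(\cat C_{\heart w}) \to K((v\cell^\bdd \cat C)^{-1} \cell^\bdd \cat C)$ is an equivalence.

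The main step is to apply Barwick's fibration theorem for labeled Waldhausen $\oo$-categories to the labeled structure $(\cell^\bdd \cat C, v\cell^\bdd \cat C)$, producing a fiber sequence
$$K(\cell^\bdd_{ac}\cat C) \to K(\cell^\bdd \cat C) \to K((v\cell^\bdd \cat C)^{-1}\cell^\bdd \cat C),$$
where $\cell^\bdd_{ac} \cat C$ denotes the full Waldhausen subcategory of cell complexes whose colimit is a zero object. I would then compute $K(\cell^\bdd \cat C)$ via the filtration $\cell^\bdd_n \cat C \subset \cell^\bdd_{n+1} \cat C$. The truncation functor $\tr_n$ splits this inclusion, and additivity applied to the cofiber sequence $\tr_n A \to A \to (A_{n+1}/A_n)[n+1]$ (with $A_{n+1}/A_n \in \cat C_{w=n+1}$, equivalent via desuspension to $\cat C_{\heart w}$) identifies the ``new'' contribution at each stage with a copy of $K(\cat C_{\heart w})$. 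Iterating additivity and passing to the colimit over $n$ should give
$$K(\cell^\bdd \cat C) \simeq \bigoplus_{i\in \Z} K(\cat C_{\heart w}).$$
A parallel analysis applied to $\cell^\bdd_{ac}\cat C$ identifies its $K$-theory with the kernel of the ``assemble'' map $\bigoplus_i K(\cat C_{\heart w}) \to K(\cat C_{\heart w})$: an acyclic cell complex is exactly a cellular filtration witnessing a zero object, which in $K$-theory records the relation $\sum_i [A_i/A_{i-1}] = 0$. Combining with the fibration sequence then collapses the direct sum to a single copy of $K(\cat C_{\heart w})$.

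The principal obstacle I foresee is making the additivity decomposition coherent across all degrees simultaneously. Additivity in its basic form splits one cofiber sequence of exact functors, but here I require compatible splittings at every cellular level intertwined with the transition maps between $\cell^\bdd_n$ and $\cell^\bdd_{n+1}$. This is where I expect the concrete pushout/equivalence characterization of ingressive edges from Lemma \ref{cellingressions}, together with the functorial truncations $\tr_n$ and $\cotr_n$, to do the real work. A related subtlety is that the identification $\cat C_{w=n} \simeq \cat C_{\heart w}$ is by $n$-fold desuspension inside $\cat C$, but the Waldhausen pair structures (ingressives with cofiber in the heart) must match under this identification; this should follow from the behavior of weight decompositions under suspension, but must be checked carefully before the additivity assembly is legitimate.
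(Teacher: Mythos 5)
Your plan is essentially the paper's own argument: it localizes the Waldhausen $\oo$-category of bounded cell complexes at the $\colim_\Z$-equivalences, applies Barwick's fibration theorem to the labeled pair $(\cell^\bdd\cat C, v\cell^\bdd\cat C)$, uses additivity with the truncation/top-quotient endofunctors to identify $K(\cell^\bdd_n\cat C)$ with a product of copies of $K(\cat C_{\heart w})$, and shows the acyclics contribute one fewer factor (the paper does this by proving $\tr_{n-1}$ is a $K$-equivalence from the acyclics onto $\cell^\bdd_{n-1}\cat C$). The only ingredient you do not flag is the verification that the labeled pair has enough cofibrations (the mapping-cylinder construction of Proposition \ref{prop_enough_cofs}), which is needed before the fibration theorem applies.
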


Recall that $\cat C_{\heart w}$ is given a Waldhausen $\oo$-category
structure where ingressions are precisely those maps admitting
cofibers in $\cat C_{\heart w}$.

The proof for the theorem studies bounded cellular filtrations of
objects in $\cat C$ with respect to the weight structure $w$. Call
this category $\cell^\bdd(\cat C)$ and let $\cell^\bdd_\triv(\cat C)$
denote the subcategory of cellular filtrations of zero objects in $\cat
C$. The localization theorem gives a fiber sequence
\begin{equation*}
  \begin{tikzcd}
    K(\cell^\bdd_\triv(\cat C)) \ar[r] & K(\cell^\bdd(\cat C)) \ar[r]
    & K(\cat C)
  \end{tikzcd}
\end{equation*}
and additivity identifies the left and middle terms of this sequence
as infinite products of $K(\cat C_{\heart w})$. By filtering the
category of cellular filtrations, we show that the left product has
one fewer copy of $K(\cat C_{\heart w})$
\begin{equation*}
  \begin{tikzcd}
    \displaystyle \prod_{\textrm{one fewer}} K(\cat C_{\heart w}) \ar[r] & \prod
    K(\cat C_{\heart w}) \ar[r] & K(\cat C)
  \end{tikzcd}
\end{equation*}
which implies $K(\cat C_{\heart w})\simeq K(\cat C)$ as desired.

Bondarko proves this theorem on $K_0$-groups by considering bounded
weight structures on triangulated categories
\cite{MR2746283}*{5.3.1}. We independently reproduce his result by
passing to the underlying stable $\oo$-category, applying our theorem,
and taking $\pi_0$.
\begin{corollary}[\cf \cite{MR2746283}*{5.3.1}]\label{bondarko_k0}
  If $\cat T$ is a triangulated category equipped with a
  non\hyp{}degenerate bounded weight structure $w$, then the inclusion
  of the heart $\cat T_{\heart w}$ into $\cat T$ induces an
  equivalence on $K$-theory,
  $K_0(\cat T) \simeq K_0(\cat T_{\heart w})$.
\end{corollary}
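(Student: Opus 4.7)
The plan is to reduce to Theorem~\ref{spherethm} by lifting to a stable $\oo$-categorical enhancement, and then extract the statement about $K_0$ by taking $\pi_0$. First I would assume $\cat T$ arises as the homotopy category of a stable $\oo$-category $\cat C$; since a weight structure on $\cat C$ is, by definition, a weight structure on $h\cat C$, the given bounded non-degenerate weight structure on $\cat T$ is the same data as a bounded non-degenerate weight structure on $\cat C$. Theorem~\ref{spherethm} then yields an equivalence of connective $K$-theory spectra $K(\cat C) \simeq K(\cat C_{\heart w})$, and taking $\pi_0$ gives $K_0(\cat C) \isom K_0(\cat C_{\heart w})$.

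Next, I would identify these two groups with the classical $K_0$-groups appearing in the corollary. The group $\pi_0 K(\cat C)$ is presented by equivalence classes of objects of $\cat C$ modulo the relations $[B] = [A] + [C]$ for each cofiber sequence $A \to B \to C$. Cofiber sequences in $\cat C$ descend to distinguished triangles in $h\cat C = \cat T$, so this is precisely the classical presentation of $K_0(\cat T)$. For the right-hand side, the orthogonality axiom forces every cofiber sequence $A \to B \to C$ with all three vertices in $\cat C_{\heart w}$ to split in $h\cat C$: since $C \in \cat T_{w\leq 0}$ and $\Sigma A \in \cat T_{w\geq 1}$, we have $\Hom_{h\cat C}(C,\Sigma A) = 0$. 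Consequently $\pi_0 K(\cat C_{\heart w})$ is the Grothendieck group of the underlying additive category, which is $K_0(\cat T_{\heart w})$ by definition.

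The main obstacle is the initial assumption that $\cat T$ admits a stable $\oo$-categorical enhancement, since Bondarko's statement is formulated for an abstract triangulated category. In all examples of interest (\eg the motivic and spectral settings of section~\ref{examples}) such an enhancement is present, and when it is not given a priori one can manufacture one from the weight-structure data itself, \eg by taking the stable hull of $\cat T_{\heart w}$ viewed as an additive $\oo$-category and invoking Proposition~\ref{generatingweightstheorem}; boundedness and non-degeneracy of $w$ ensure that every object of $\cat T$ is built from the heart by finitely many distinguished triangles, so the resulting homotopy category recovers $\cat T$ along with its weight structure. I expect the enhancement step, rather than the $\pi_0$-extraction, to be the only genuinely subtle point in the argument.
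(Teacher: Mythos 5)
Your proposal matches the paper's own argument: the paper proves this corollary in one sentence by ``passing to the underlying stable $\oo$-category, applying our theorem, and taking $\pi_0$,'' which is exactly your route, and your identifications of $\pi_0 K(\cat C)$ with the triangulated $K_0(\cat T)$ and of $\pi_0 K(\cat C_{\heart w})$ with the split Grothendieck group of the heart supply detail the paper leaves implicit. The enhancement issue you flag is real, but the paper simply presupposes an underlying stable $\oo$-category rather than constructing one, so your treatment is if anything more careful on the only delicate point.
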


At the end of the section, we study the $K$-theory of the heart of a
weight structure. All ingressions in $\cat C_{\heart w}$ split in the
homotopy category, so the $K$-theory admits a description in the style
of Quillen's plus construction for $K$-theory.

\subsection{Proving the main theorem}

Using the technology of cellular filtrations constructed in section
\ref{cellfiltrations}, we study the $K$-theory of $\cat C$ through the
labeled pair Waldhausen $\oo$-category
$(\cell^\bdd \cat C, v\cell^\bdd \cat C )$. %
We use the localization theorem to relate the algebraic $K$-theory of
this pair to the algebraic $K$-theory of $\cell^\bdd \cat C$.
\begin{theorem}[{\cite{barwick}*{9.24}}] %
  Suppose $(\cat A,w \cat A)$ is a labeled Waldhausen $\oo$-category
  that has enough cofibrations. Suppose $\phi:\Wald_\oo \to E$ is an
  additive theory with left derived functor $\Phi$. Then the inclusion
  $i:\cat A^w \to \cat A$ and the morphism of virtual Waldhausen
  $\oo$-categories $e:\cat A \to \cat B(\cat A, w\cat A)$ give rise to
  a fiber sequence
  \begin{equation*}
    \begin{tikzcd}
      \phi(\cat A^w) \ar[r] \ar[d] & \phi(\cat A) \ar[d] \\
      * \ar[r] & \Phi(\cat B(\cat A,w\cat A)).
    \end{tikzcd}
  \end{equation*}
\end{theorem}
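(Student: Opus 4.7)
The plan is to reduce the fiber sequence to the additivity property of $\phi$ by presenting $\cat B(\cat A, w\cat A)$ as a geometric realization of a simplicial virtual Waldhausen $\oo$-category each of whose levels decomposes into copies of $\cat A$ and $\cat A^w$. Since $\Phi$ is the left derived functor of the additive $\phi$, it commutes with geometric realizations of simplicial virtual Waldhausen $\oo$-categories, so it is enough to do the bookkeeping one simplicial level at a time and then take realizations in the stable target $E$.

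First, I would unpack $\cat B(\cat A, w\cat A)$ as the realization of a simplicial virtual Waldhausen $\oo$-category $F_\bullet$ whose $n$-simplices $F_n$ parametrize strings of $n$ composable ingressions in $\cat A$ whose successive cofibers are labeled, i.e.\ diagrams $A_0 \cof A_1 \cof \cdots \cof A_n$ with each $A_i/A_{i-1}$ belonging to $w\cat A$. The hypothesis that $(\cat A, w\cat A)$ has enough cofibrations is exactly what is needed to guarantee that such strings exist abundantly enough to assemble into honest Waldhausen $\oo$-categories at each level and that the face and degeneracy maps are exact. The realization $|F_\bullet|$ provides a model for $\cat B(\cat A,w\cat A)$, with the map $e$ being the canonical inclusion of $0$-simplices.

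Second, at each level I would apply additivity of $\phi$ to the iterated cofiber decomposition along the string. An additive theory sends a cofiber sequence of Waldhausen $\oo$-categories $\cat X \to \cat Y \to \cat Y/\cat X$ to a cofiber sequence in $E$, and the ``source'' and ``cofiber'' exact functors on $F_n$ express it as an iterated extension of one copy of $\cat A$ and $n$ copies of $\cat A^w$. Additivity therefore produces a canonical splitting $\phi(F_n) \simeq \phi(\cat A)\oplus \phi(\cat A^w)^{\oplus n}$. Tracking the face and degeneracy operators through this splitting identifies $\phi(F_\bullet)$ with the two-sided bar construction for an action of $\phi(\cat A^w)$ on $\phi(\cat A)$ whose augmentation is $\phi(i)$.

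Third, the realization of a two-sided bar construction of this shape in a stable $\oo$-category is precisely the cofiber of $\phi(i): \phi(\cat A^w) \to \phi(\cat A)$, and by stability of $E$ the resulting cofiber sequence $\phi(\cat A^w) \to \phi(\cat A) \to \Phi(\cat B(\cat A, w\cat A))$ is also a fiber sequence. The main obstacle is the second step: the additivity splitting requires the source/cofiber functors on $F_n$ to sit in an honest cofiber sequence of Waldhausen $\oo$-categories and to do so compatibly with every face and degeneracy map. Verifying these compatibilities, which is where the ``enough cofibrations'' hypothesis does its real work (allowing one to rectify cofibers of labeled maps into honest ingressions inside $\cat A^w$ at the level of pair structures), is the delicate technical heart of the argument.
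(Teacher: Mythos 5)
This theorem is not proved in the paper at all: it is quoted verbatim as Barwick's generic fibration theorem \cite{barwick}*{9.24}, and the paper's only work connected to it is verifying the ``enough cofibrations'' hypothesis for the specific labeled Waldhausen $\oo$-category $(\cell^\bdd \cat C, v\cell^\bdd\cat C)$ (Proposition \ref{prop_enough_cofs}). So there is no internal proof to compare yours against; the relevant comparison is with Barwick's argument, which is itself an $\oo$-categorical version of Waldhausen's proof of his fibration theorem, and your sketch is visibly modeled on that same strategy: present $\cat B(\cat A,w\cat A)$ simplicially, split each level by additivity into $\phi(\cat A)\oplus \phi(\cat A^w)^{\oplus m}$, and identify the realization of the resulting bar-type simplicial object with the cofiber of $\phi(i)$ in the stable target, using that the left derived functor $\Phi$ preserves geometric realizations.

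As a standalone proof, however, there are two problems. First, your description of the simplicial levels is not the definition: the $m$-simplices of $\cat B(\cat A,w\cat A)$ are sequences of $m$ \emph{labeled edges} of $\cat A$, not sequences of ingressions with $w$-acyclic cofibers. The passage from labeled edges to labeled \emph{ingressives} (via functorial mapping cylinders) is precisely what ``enough cofibrations'' buys, and the further passage from a sequence of labeled ingressives to the datum of an initial object together with $m$ acyclic successive quotients is not an equivalence of Waldhausen $\oo$-categories at all --- it is an equivalence only after applying the additive theory, i.e.\ it is itself an instance of additivity, with the gluing property of the labeling needed to know the quotients are $w$-acyclic. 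Second, the simplicial coherence of those additivity splittings --- checking that the face and degeneracy maps really carry the level-wise decompositions into the bar construction on $\phi(i)$ --- is the actual content of the theorem, and you explicitly defer it. So the outline is the right one (and agrees with the cited source), but the step you flag as the ``delicate technical heart'' is asserted rather than proved, which is a genuine gap if this were to replace the citation.
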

Here, $\cat B(\cat A,w\cat A)$ is the virtual Waldhausen
$\oo$-category corresponding to the pair $(\cat A, w\cat A)$. For our
result $\phi$ will be $K$-theory, and the derived $K$-theory on the
virtual Waldhausen $\oo$-category of the pair
$K(\cat B(\cat A, w\cat A))$ will be written simply as the $K$-theory
of the pair $K(\cat A,w\cat A)$.  In this theorem, $\cat A^w$ denotes
the full subcategory of $w$-acyclic objects in $\cat A$. In our
setting, the $v$-acyclic objects of $\cell^\bdd\cat C$ are classified
by the following computation.
\begin{proposition}\label{prop14}
  If $A$ is a $v$-acyclic object of $\cell^\bdd \cat C$ then
  $A_n$ has weight $w=n$ in $\cat C$ for all $n$.
\end{proposition}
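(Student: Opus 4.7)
The claim follows essentially by citing the second statement of the earlier proposition about bounded cell complexes. Here is the plan.

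First I would unpack what it means for $A$ to be $v$-acyclic. By definition, $v\cell^\bdd \cat C$ consists of those maps in $\cell^\bdd \cat C$ which $\colim_\Z$ carries to equivalences in $\cat C$, so the $v$-acyclic objects of $\cell^\bdd \cat C$ are precisely those cell complexes $A$ for which $A_\oo = \colim_\Z A$ is a zero object of $\cat C$.

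Since $A$ is bounded, there exists some $N$ so that $A \simeq \tr_N A \simeq \cotr_{-N} A$; in particular the diagram stabilizes at $A_N$, so $A_N \simeq A_\oo$ and hence $A_N$ is a zero object of $\cat C$ by $v$-acyclicity. A zero object belongs to every $\cat C_{w = k}$, so in particular $A_N \in \cat C_{w=N}$.

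At this point the earlier proposition on bounded cell complexes applies verbatim: it asserts that if $A \in \cell_N^\bdd \cat C$ and $A_\oo \in \cat C_{w=N}$, then $A_i \in \cat C_{w=i}$ for all $i$. That proposition was itself proved by combining the levelwise bound $A_i \in \cat C_{w \leq i}$ (from filtering up from a zero object) with a downward induction using Proposition \ref{PropGeqCofibers} on the cofiber sequence $A_{i-1} \to A_i \to A_i/A_{i-1}$, so there is nothing new to verify here. The only ``obstacle'' is a bookkeeping one, namely checking that the hypothesis $A_\oo \in \cat C_{w=N}$ is genuinely satisfied; but this is automatic since $A_\oo$ is a zero object. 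This yields the claim.
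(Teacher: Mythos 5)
Your proof is correct and follows essentially the same route as the paper's: identify $v$-acyclicity with $A_\oo$ being a zero object, note that the top stage $A_N\simeq A_\oo$ therefore lies in $\cat C_{w=N}$, and then combine the upward bound $A_i\in\cat C_{w\leq i}$ with a downward induction giving $A_i\in\cat C_{w\geq i}$. The only cosmetic difference is that the paper proves a fresh lemma for the downward step, whereas you reuse the already-established proposition on bounded cell complexes, which is a perfectly valid (and slightly more economical) way to package the same induction.
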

\begin{lemma}
  If
  \begin{equation*}
    \begin{tikzcd}
      A\ar[r] & B \ar[r] & C
    \end{tikzcd}
  \end{equation*}
  is a cofiber sequence in $\cat C$ with $B \in \cat C_{w\geq m}$ and $C\in
  \cat C_{w\geq n+1}$ and $m\geq n+1$ then $A\in \cat C_{w \geq n}$.
\end{lemma}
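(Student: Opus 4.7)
The plan is to rotate the given cofiber sequence and then invoke the orthogonality characterization of $\cat C_{w\geq n}$ provided by Proposition \ref{mappingprop}. In a stable $\oo$-category, the sequence $A\to B\to C$ is equivalently the rotated fiber sequence
\begin{equation*}
  \Sigma^{-1}C \to A \to B.
\end{equation*}
Since $\cat C_{w\geq k} = \Sigma^k \cat C_{w\geq 0}$ by definition, the hypothesis $C\in \cat C_{w\geq n+1}$ gives $\Sigma^{-1}C \in \cat C_{w\geq n}$. Likewise, the hypothesis $B\in \cat C_{w\geq m}$ with $m\geq n+1$ yields $B\in \cat C_{w\geq n+1}\subseteq \cat C_{w\geq n}$.

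By Proposition \ref{mappingprop}, to show $A\in \cat C_{w\geq n}$ it suffices to verify $h\cat C(Y,A)=0$ for every $Y\in \cat C_{w\leq n-1}$. Fix such a $Y$ and apply $h\cat C(Y,-)$ to the rotated distinguished triangle to obtain the exact sequence
\begin{equation*}
  h\cat C(Y,\Sigma^{-1}C) \to h\cat C(Y,A) \to h\cat C(Y,B).
\end{equation*}
Both outer terms vanish: by orthogonality in the weight structure, $h\cat C(Y,Z)=0$ whenever $Y\in \cat C_{w\leq n-1}$ and $Z\in \cat C_{w\geq n}$, and this applies to both $Z=\Sigma^{-1}C$ and $Z=B$ by the preceding paragraph. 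Hence $h\cat C(Y,A)=0$, and Proposition \ref{mappingprop} delivers $A\in \cat C_{w\geq n}$.

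There is essentially no obstacle here: the entire argument consists of one rotation together with the orthogonality axiom, and the indices bookkeeping is straightforward as long as one tracks that the suspension $\Sigma^{-1}$ applied to $C\in \cat C_{w\geq n+1}$ shifts the weight down by exactly one so that it still lands in $\cat C_{w\geq n}$, while the slack $m\geq n+1$ is precisely what ensures $B$ does the same.
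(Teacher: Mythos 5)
Your argument is correct and is essentially the paper's own proof: both rotate the triangle to $\Sigma^{-1}C \to A \to B$, apply $h\cat C(Y,-)$ for $Y\in \cat C_{w\leq n-1}$, kill the two outer terms by orthogonality, and conclude via Proposition \ref{mappingprop}. The index bookkeeping is handled correctly.
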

\begin{proof}
  For any $X \in \cat C_{w\leq n-1}$ we obtain the following exact sequence
  of mapping spaces in $h\cat C$.
  \begin{equation*}
    \begin{tikzcd}
      hC(X,\Sigma^{-1} C) \ar[r] & hC(X,A) \ar[r] & hC(X,B)
    \end{tikzcd}
  \end{equation*}
  Now since $C\in \cat C_{w\geq n+1}$, $\Sigma^{-1} C$ lives in $\cat C_{w\geq n}$ so
  the left mapping space is trivial. The same is true of the right
  mapping space since $B\in \cat C_{w\geq m}$ and $m \geq n+1 \geq n$. We
  conclude that $A$ is in $\cat C_{w\geq n}$.
\end{proof}
\begin{proof}[Proof of proposition \ref{prop14}]
  The proof follows from induction down from the finite stage where
  $A$ achieves its colimit, a zero object. Say that
  $A\in \cell_n^\bdd \cat C$ so that we have an equivalence
  $* \simeq A_n$, letting us conclude that $A_n$ has weight
  $w=n$. Induction using the lemma above when $m=k+1$ lets us conclude
  that $A_k$ has weight $w \geq k$ for all $k$. A similar induction
  from below demonstrates that $A_n \in \cat C_{w\leq n}$ for any
  sequence in $A$. This completes the proof.
\end{proof}

\begin{remark}
  As $\cat C_{w=n}$ consists of the ``pure'' objects in the weight
  structure, $(\cell^\bdd \cat C)^v$ can be thought of as ``finite
  Kozsul resolutions'' between zero objects in $\cat C$. We note that
  $\cat C_{\heart w} = \cat C_{w=0}$ is equivalent (via suspension) to
  $\cat C_{w=n}$ for any $n$.
\end{remark}

To use the localization theorem, we must check that the marked cell
filtrations satisfy the technical hypothesis of having enough
cofibrations.
\begin{proposition}\label{prop_enough_cofs}
  When $\cat C$ is a stable $\oo$-category equipped with the maximal
  Waldhausen $\oo$-category structure, the labeled Waldhausen
  $\oo$-category $(\cell^\bdd \cat C, v\cell^\bdd \cat C)$ has enough
  cofibrations.
\end{proposition}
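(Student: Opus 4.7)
The plan is to verify Barwick's ``enough cofibrations'' hypothesis for the labeled Waldhausen $\oo$\hyp{}category $(\cell^\bdd \cat C, v\cell^\bdd \cat C)$ from \cite{barwick}*{9}: every $v$-equivalence in $\cell^\bdd \cat C$ must admit a refinement into a composite involving ingressions that are simultaneously $v$-equivalences.

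Given a $v$-equivalence $f: A \to B$ in $\cell^\bdd \cat C$, so that $\colim_\Z f$ is an equivalence in $\cat C$, I would construct an intermediate bounded cell complex $A'$ together with a factorization $A \cof A' \to B$ whose first map is both ingressive and a $v$-equivalence. The natural construction recycles the pushout mechanism from the proof of Lemma \ref{cellingressions}: set $A^{(k)} \coloneq \tr_k B \cup_{\tr_k A} A$ and form the telescope
\begin{equation*}
  A \to A^{(-N)} \to A^{(-N+1)} \to \cdots \to A^{(N)} \simeq B
\end{equation*}
where $N$ is large enough that both $A$ and $B$ lie in $\cell_N^\bdd \cat C$. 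Lemma \ref{cellingressions} already gives that each transition map is ingressive in $\cell^\bdd \cat C$, so the task reduces to showing each transition is also a $v$-equivalence.

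The main obstacle is precisely this $v$-equivalence check: one must verify that the induced map $\colim_\Z A^{(k)} \to \colim_\Z A^{(k+1)}$ is an equivalence in $\cat C$. Because colimits in $\cat C$ commute with pushouts and the diagrams are eventually constant, this reduces to a comparison of pushouts assembled from the $A_n$ and $B_n$ together with the global equivalence $A_\infty \simeq B_\infty$. I expect this step to require careful bookkeeping with the weight decompositions of Section \ref{basicprops} to identify the cofibers of consecutive transitions and confirm they vanish upon passing to $\colim_\Z$. Should the telescope fail to be a $v$-equivalence at each stage, a fallback is to build $A'$ directly by retaining the $A_n$ in low filtration degrees and attaching cells drawn from $B$ above a threshold chosen via the boundedness of both complexes, using stability of $\cat C$ and the equivalence $f_\infty$ to ensure the result is a bounded cell complex with the correct quotients in $\cat C_{w=n}$. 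Once a factorization of this form is in hand, it realizes the original $f$ as a composite of labeled ingressions and so establishes enough cofibrations.
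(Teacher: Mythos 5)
There is a genuine gap, and it sits exactly where you flag it: the transition maps of your telescope are not $v$-equivalences, and no bookkeeping with weight decompositions will repair this. Since colimits commute with pushouts and the diagrams are eventually constant, $\colim_\Z A^{(k)} \simeq B_k \cup_{A_k} A_\oo$, and the cofiber of $\colim_\Z A^{(k)} \to \colim_\Z A^{(k+1)}$ is $\cofiber\bigl(A_{k+1}/A_k \to B_{k+1}/B_k\bigr)$. This is nonzero whenever $f$ fails to induce equivalences on level quotients, which is the generic situation: a $v$-equivalence relates two possibly quite different cell structures on the same object. Concretely, in $\Sp^\fin$ let $A$ be the filtration constant at $S^0$ from degree $0$ on, and let $B$ be $\cdots \to 0 \to S^0\vee S^0 \to S^0 \to \cdots$ (attach an extra $0$-cell and cancel it with a $1$-cell); the evident $f\colon A\to B$ is a $v$-equivalence, yet $\colim_\Z A^{(0)} \simeq S^0\vee S^0 \not\simeq S^0$. (A secondary problem: Lemma \ref{cellingressions} establishes ingressivity of these transition maps only for $f$ already satisfying the ingression criterion; for an arbitrary $v$-equivalence the cofiber of $A_j\cup_{A_i}B_i \to B_j$ is an extension of $\Sigma(A_j/A_i)$ by $B_j/B_i$ and can acquire weight $j+1$, violating the definition.) Your fallback of grafting cells of $B$ onto $A$ above a threshold is too vague to assess and faces the same obstruction, since the grafted object's colimit will generally differ from $A_\oo \simeq B_\oo$.

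The missing idea is a degree shift. The paper instead verifies Barwick's sufficient criterion \cite{barwick}*{9.22} by building a functorial mapping cylinder $Mf$ with $(Mf)_i \simeq B_i \cup_{A_{i-1}} A_i$, i.e.\ the pushout of $B \from \sh_{-1}(B) \from \sh_{-1}(A) \to A$ using the shift functor $\sh_{-1}$. Gluing $B_i$ to $A_i$ along the \emph{previous} skeleton $A_{i-1}$ forces the level quotients to split (by weight orthogonality) as $A_{i+1}/A_i \vee B_{i+1}/B_i \vee \Sigma(A_i/A_{i-1})$, so the extra cells contributed by $A$ occur in canceling pairs and disappear in the colimit: $\colim_\Z Mf \simeq B_\oo \cup_{A_\oo} A_\oo \simeq B_\oo$. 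Then $A \to Mf$ is an ingression, $B \to Mf$ is a $v$-equivalence, and the natural transformation $\id \to M$ is objectwise labeled, which is what \cite{barwick}*{9.22} requires. If you want to salvage a factorization strategy, you need this cylinder (or something equivalent); the naive levelwise gluing $\tr_k B \cup_{\tr_k A} A$ cannot work.
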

\begin{proof}
  By \cite{barwick}*{9.22}, it is sufficient to construct a functorial
  mapping cylinder $M$ on arrows
  $$M:\Fun(\Delta^1, \cell^\bdd \cat C) \to \Fun(\Delta^1, \cell^\bdd
  \cat C)$$
  that produces ingressive arrows, preserves
  $v$-equivalences, and comes with a natural transformation
  $\eta: \id \to M$ which is an objectwise labeled by
  $v$-equivalences.
  
  We construct $M$ for arrows in $\Fun(N\Z, \cat C)$ and show that it
  produces arrows in $\cell^\bdd \cat C$. Let $\sh_{-1}$ denote the
  functor induced on $\Fun(N\Z,\cat C)$ by the map $z\mapsto z-1$ on
  $\Z$. Note that $\sh_{-1}(A)_i = A_{i-1}$. Also note that there is a
  natural transformation $\sh_{-1} \to \id$ whose levelwise maps are
  just the structure maps, \ie $A_{i-1}\to A_i$.

  For $f:A\to B$ an arrow, $Mf$ will be defined to be the pushout
  \begin{equation*}
    \begin{tikzcd}
      \sh_{-1}(A) \ar[r, "\sh_{-1}(f)"] \po \ar[d] & \sh_{-1}(B) \ar[r] & B \ar[d] \\
      A \ar[rr] & {} & Mf
    \end{tikzcd}
  \end{equation*}
  and since colimits are computed levelwise on diagram categories, we
  observe that
  \begin{equation*}
    (Mf)_i \simeq B_i \cup_{A_{i-1}} A_i.
  \end{equation*}
  We regard $M(f)$ as the arrow $A \to Mf$ and will write $Mf$ only
  for the target object.  We note that the construction is functorial
  on the arrow category for $\Fun(N\Z, \cat C)$.
  
  First we check that $Mf$ is an endofunctor for arrows in
  $\cell^\bdd(\cat C)$. By commuting pushouts, we find that
  $(Mf)_{i+1}/(Mf)_{i}$ is the (homotopy) pushout
  \begin{equation*}
    \begin{tikzcd}
      A_{i}/A_{i-1} \ar[r] \ar[d] \po & B_{i+1}/B_i \ar[d] \\
      A_{i+1}/A_i \ar[r] & (Mf)_{i+1}/(Mf)_i
    \end{tikzcd}
  \end{equation*}
  but due to the weights of the objects, the top and left map are both
  $0$, so $(Mf)_{i+1}/(Mf)_i$ splits up as a wedge sum
  \begin{equation*}
    (Mf)_{i+1}/(Mf)_i \simeq A_{i+1}/A_i \vee B_{i+1}/ B_i \vee \Sigma(A_i / A_{i-1})
  \end{equation*}
  which has weight $w=i+1$ as desired. Since $A$ and $B$ are bounded,
  so will $Mf$. Hence $M(f)$ is an arrow in $\cell^\bdd(\cat C)$ if
  $f$ is as well.

  Next we check that $M(f)$ is a cofibration in $\cell^\bdd(\cat
  C)$. It suffices to check the latching condition for the map $g$.
  \begin{equation*}
    \begin{tikzcd}
      A_i \ar[r] \ar[d] \po & A_{i+1} \ar[d] \ar[ddr, bend left] \\
      (Mf)_i \ar[r] \ar[drr, bend right] & P \ar[dr, "g"] \\
      && (Mf)_{i+1}
    \end{tikzcd}
  \end{equation*}
  From the construction of $Mf$, we have the following pushout squares
  \begin{equation*}
    \begin{tikzcd}
      A_{i-1} \ar[r] \ar[d] \po & A_i \ar[d] \ar[r] \po & A_{i+1} \ar[d] \\
      B_i \ar[r] & (Mf)_i \ar[r] & P
    \end{tikzcd}
  \end{equation*}
  and the outer square is also a pushout square by
  \cite{HTT}*{4.4.2.1}. We map the outer pushout square to
  \begin{equation*}
    \begin{tikzcd}
      A_i \ar[r] \ar[d] \po & A_{i+1} \ar[d] \\
      B_{i+1} \ar[r] & M_{i+1}
    \end{tikzcd}
  \end{equation*}
  by the obvious maps. We take the cofiber of this map of pushout
  squares which, since colimits commute, is also a pushout square.
  \begin{equation*}
    \begin{tikzcd}
      A_i / A_{i-1} \ar[r] \ar[d] \po & * \ar[d] \\
      B_{i+1}/B_{i} \ar[r] & M_{i+1} / P
    \end{tikzcd}
  \end{equation*}
  The weights of $A_i/A_{i-1}$ and $B_{i+1}/B_{i}$ imply that
  $M_{i+1} \simeq B_{i+1}/B_i \vee \Sigma(A_i / A_{i-1})$ which lives
  in weight $w=i+1$ as desired.

  Now we claim that $M$ is a mapping cylinder for $f$ with respect to
  the $v$-equivalences. That is, we show that the natural map
  $B \to Mf$ is a $v$-equivalence. Denote this map $\phi:B\to Mf$. The
  maps $f:A\to B$ and $\id:B\to B$ also induce a map $\psi:Mf \to
  B$. Levelwise, these maps appear as
  \begin{equation*}
    \begin{tikzcd}
      A_{i-1} \ar[r] \ar[d] \po & B_i \ar[d, "\phi"] \ar[ddr, bend left, "\id"] \\
      A_i \ar[r] \ar[drr, "f_i", bend right] & (Mf)_i \ar[dr, "\psi"] \\
      & & B_i
    \end{tikzcd}
  \end{equation*}
  and $\psi\comp \phi \simeq \id$. $\psi$ is a right inverse for
  $\phi$ as well if $f_i$ factors the map from $A_i\to (Mf)_i$ through
  $B_i$ via $\phi$. This will be satisfied once $A$ and $B$ both
  achieve their limits. Equivalently, the vertical cofibers in the square
  \begin{equation*}
    \begin{tikzcd}
      A_{i-1} \ar[r] \ar[d] \po & B_i \ar[d] \\
      A_i \ar[r] & (Mf)_i
    \end{tikzcd}
  \end{equation*}
  must be equivalent. The left is $A_i/A_{i-1}$ and the right is
  $(Mf)_i/B_i$. Once $A$ no longer has any cells, we conclude that
  $\phi_i :B_i\to (Mf)_i$ is an equivalence. Hence, $\phi$ is a
  $v$-equivalence. Since $\phi$ induces the desired natural
  transformation $\id \to M$, we conclude that
  the pair $(\cell^\bdd \cat C, v\cell^\bdd\cat C)$ has enough cofibrations.
\end{proof}

We apply Barwick's localization theorem to the algebraic $K$-theory of
the labeled Waldhausen $\oo$-category $(\cell^\bdd \cat C, v\cell^\bdd
\cat C)$ to produce the following pushout diagram.
\begin{equation*}
  \begin{tikzcd}
    K((\cell^\bdd \cat C)^v) \ar[r] \ar[d] \po & K(\cell^\bdd \cat C) \ar[d] \\
    * \ar[r] & K(\cell^\bdd \cat C, v\cell^\bdd \cat C)
  \end{tikzcd}
\end{equation*}
The top map is induced by the inclusion of the $v$-acyclics into
$\cell^\bdd \cat C$. The proof will proceed by factoring this map on
$K$-theory and analyzing the resulting diagram.

Integral to our argument are two functorial ways of embedding
$\cat C_{\heart w}$ into cell complexes. On one hand, we can include
an $n$-spherical object $a_n$ as the cell filtration concentrated in
degree $n$, where we attach $a_n$ to a zero object and then keep the
filtration constant. This functor essentially includes $a_n$ as the
filtration $0 \to a_n$ living in $\cell_n^\bdd\cat C$. We will abuse
notation and write the resulting complex simply as $a_n$ when its
weight is clear. We can also include $a_n$ into the $v$-acyclic cell
filtrations by including it as the filtration $0 \to a_n \to 0'$ where
$a_n$ is attached to $0$ and then immediately killed at the next
level. This will be referred to as the \emph{cone} on $a_n$, written
$\cone(a_n)$, and includes $a_n$ into $(\cell_{n+1}^\bdd\cat C)^v$.
Coherent functoriality of these maps is ensured by the following
construction.

By \cite{HTT}*{3.2.2}, the source map
$s: Fun(\Delta^1, \cat C_{w=i}) \to \cat C_{w=i}$ is a cartesian
fibration and if we denote the full subcategory of zero objects in
$\cat C$ by $\cat C_0$, pulling back $s$ over the inclusion of
$\cat C_0 \to \cat C_{w=i}$ yields a cartesian fibration
$\cat C_0 \x_{s} \Fun(\Delta^1, \cat C_{w=i})$ which at a zero object
$*$ classifies the $\oo$-category of arrows $* \to X$ in $\cat C$
where $X$ has weight $w=i$. Let $p_i:N\Z\to \Delta^1$ be defined by
$p_i(j)= 0$ for $j<i$ and $p_i(j) = 1$ for $j\geq i$. Pullback along
$p_i$ and inclusion into $\cat C$ induces a map
$\cat C_0 \x_s \Fun(\Delta^1,\cat C_{w=i}) \to \Fun(N\Z, \cat C)$.
This carries an arrow $*\to X$ with target in weight $w=i$ to a
filtration that is evidently cellular and bounded as the level
quotients are all zero objects except for at degree $i$ where it is
equivalent to $X$. We will denote the resulting functor from
$\cat C_{w=i} \to \cell_n^\bdd\cat C$ by $c_i$, or, when clear, with
no decoration, as this is the natural way to include $\cat C_{w=i}$
into $\cell_n^\bdd \cat C$.

To produce the cone functor, we use that the maps $ev_0$ and
$ev_1 : \Fun(\Delta^2, \cat C_{w=i}) \to \cat C_{w=i}$ are also
cartesian fibrations. We pull back the map $(ev_0, ev_2)$ from
$\Fun(\Delta^2,\cat C_{w=i}) \to \cat C_{w=i} \x \cat C_{w=i}$ along
the inclusion of $\cat C_0 \x \cat C_0$. Above a pair of zero objects
this classifies composable pairs of arrows $0\to X \to 0'$ with $X$ in
$\cat C_{w=i}$. We pull back along the map $N\Z \to \Delta^2$ which
collapses $\Z$ onto the interval $[i-1, i+1]$ as above. Observe that
the resulting cell filtration lives in $\cell_{i+1}^\bdd \cat C$ and
$\colim_\Z$ takes it to a zero object. Denote the corresponding
functor $\cat C_{w=i} \to (\cell_{i+1}^\bdd \cat C)^v$ by $\cone$.

$(n-1)$-truncation $\tr_{n-1}$ is an exact functor $\cell^\bdd\cat C$
to $\cell_{n-1}^\bdd\cat C$ by lemma
\ref{cellingressions}. Suppressing the inclusion
$\cell_{n-1}^\bdd \cat C \to \cell_n^\bdd \cat C$ as well as the
restriction to the subcategory, we consider $\tr_{n-1}$ as an
endofunctor of $\cell_n^\bdd \cat C$. The truncation $\tr_{n-1}$ comes
with a natural transformation to $\id$ by construction. We take the
(homotopy) cofiber of this map of endofunctors $\tr_{n-1}\to \id$ in
the arrow $\oo$-category of functors. Denote the resulting endofunctor
of $\cell_n^\bdd \cat C$ by $q_{n}$.

The functor $q_{n}$ is constructed to be a homotopy-coherent model for
the top level quotient $A \mapsto A_n/A_{n-1}$. In particular, since
$\colim_\Z\tr_{n-1}A$ is equivalent to $A_{n-1}$ and $\colim_\Z A$ is
equivalent to $A_n$, we observe that
$\colim_\Z q_{n-1} A \simeq A_n/A_{n-1}$ in the homotopy category of
$C$. Alternatively, the cofiber is computed pointwise, and we observe
that the image of $q_n$ is equivalent to $C_{w=n}$ included as
constant cell complexes concentrated in degree $n$.
\begin{proposition}
  The map
  \begin{equation*}
    \begin{tikzcd}
      K(\cell_{n-1}^\bdd \cat C) \x K(\cat C_{w=n}) \ar[r, "\vee"] &
      K(\cell_n^\bdd \cat C)
    \end{tikzcd}
  \end{equation*}
  is an equivalence, where we include $\cat C_{w=n}$ as constant cell
  complexes concentrated in degree $n$.
\end{proposition}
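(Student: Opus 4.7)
The approach is to invoke Barwick's additivity theorem applied to the cofiber sequence of exact endofunctors $\tr_{n-1} \to \id \to q_n$ on $\cell_n^\bdd \cat C$ constructed just before the statement. Since the essential image of $q_n$ consists of constant cell complexes in degree $n$, factor $q_n \simeq c_n \circ \bar q_n$ for an induced exact functor $\bar q_n : \cell_n^\bdd \cat C \to \cat C_{w=n}$, and consider the candidate inverse
\begin{equation*}
 (\tr_{n-1}, \bar q_n) : \cell_n^\bdd \cat C \longrightarrow \cell_{n-1}^\bdd \cat C \times \cat C_{w=n}.
\end{equation*}

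First I would verify, at the categorical level, that $(\tr_{n-1}, \bar q_n) \circ \vee \simeq \id$. Since coproducts in $\cell_n^\bdd \cat C$ are formed levelwise and both $\tr_{n-1}$ and $\bar q_n$ preserve coproducts, this splits into two computations: $\tr_{n-1}(A \vee c_n(X)) \simeq A$ because $A$ is already $(n-1)$\hyp{}truncated and $c_n(X)$ vanishes below degree $n$; and $\bar q_n(A \vee c_n(X)) \simeq X$ because the top quotient of $A$ vanishes while $\bar q_n \circ c_n \simeq \id$. For the other composite $\vee \circ (\tr_{n-1}, \bar q_n)$, which sends $B$ to $\tr_{n-1}(B) \vee c_n \bar q_n(B)$, I would apply additivity to the cofiber sequence $\tr_{n-1} \to \id \to c_n \circ \bar q_n$ to obtain
\begin{equation*}
 [\id]_K \simeq [\tr_{n-1}]_K + [c_n \circ \bar q_n]_K
\end{equation*}
as endomorphisms of $K(\cell_n^\bdd \cat C)$. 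The right-hand side is exactly the $K$-theoretic effect of $\vee \circ (\tr_{n-1}, \bar q_n)$, so this composite is the identity on $K$-theory, completing the equivalence.

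The main obstacle is checking that $\bar q_n$ is exact and that $\tr_{n-1} \to \id \to q_n$ is a cofiber sequence of exact functors in the Waldhausen $\oo$-category sense required by additivity. Exactness of $\tr_{n-1}$ follows from Lemma \ref{cellingressions}, and $\bar q_n$ preserves ingressions because an ingression $A \hookrightarrow B$ in $\cell_n^\bdd \cat C$ induces a map $A_n/A_{n-1} \to B_n/B_{n-1}$ whose cofiber $B_n / (A_n \cup_{A_{n-1}} B_{n-1})$ is exactly the weight-$n$ piece guaranteed by the ingression data of Lemma \ref{cellingressions}. That the cofiber sequence of endofunctors itself is pointwise a cofiber sequence of exact functors is immediate from the construction of $q_n$ as the levelwise cofiber of $\tr_{n-1} \to \id$, so Barwick's additivity applies without further work.
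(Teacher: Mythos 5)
Your proposal is correct and follows essentially the same route as the paper: the paper also applies Barwick's additivity to the cofiber sequence of endofunctors $\tr_{n-1}\to\id\to q_n$, observes that $\vee$ is a right inverse to $\tr_{n-1}\oplus q_n$ already at the categorical level, and concludes that both composites (one categorically, one via additivity) are the identity on $K$-theory. Your extra care in checking that $\bar q_n$ preserves ingressions is a point the paper leaves implicit, but it introduces no divergence in the argument.
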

\begin{proof}
  The pushout square of endofunctors
  \begin{equation*}
    \begin{tikzcd}
      \tr_{n-1} \ar[r] \ar[d] \po & \id \ar[d] \\ 0 \ar[r] & q_n
    \end{tikzcd}
  \end{equation*}
  implies by \cite{barwick}*{7.4.(5)} %
  that $K(\tr_{n-1} \oplus q_n)$ is
  equivalent to $K(\id)$ on $K(\cell_n^\bdd \cat C)$. Hence the
  inclusion $(\cell_n^\bdd \cat C)^v \to \cell_n^\bdd \cat C$ factors on
  $K$-theory as
  \begin{equation*}
    \begin{tikzcd}
      K((\cell_{n-1}^\bdd \cat C) \x
      \im q_{n}) \simeq K((\cell_{n-1}^\bdd \cat C) \x \cat C_{w=n})
      \ar[dr, "\vee"]\\
      K( (\cell_n^\bdd \cat C)^v) \ar[u, "\tr_{n-1}\oplus q_n"]
      \ar[r, "\incl"'] & K(\cell_{n}\cat C)
    \end{tikzcd}
  \end{equation*}
  where the middle term is determined by the images of $\tr_{n-1}$ and
  $q_n$, which on $(\cell_n^\bdd\cat C )^v$ are an $(n-1)$-truncated
  cell complex and a cell complex concentrated in degree $n$, which is
  the essential image of the weight-$n$-spheres under the
  constant-cell-complex functor
  $\cat C_{w=n}\to \cell_{n}^\bdd \cat C$. The vertical map $\vee$ is
  the wedge of the inclusion and the constant-cell-filtration functor
  from $\cat C_{w=n}$.

  Furthermore, the wedge product map is a right inverse to
  $\tr_{n-1} \oplus q_n$ before taking $K$-theory, so we conclude that
  all the maps are equivalences on $K$-theory and hence the wedging
  map is an equivalence on $K$-theory.
\end{proof}
We can induct down on $n$ to arrive at the following result.
\begin{corollary}\label{cell_prod_heart}
  $K(\cell_n^\bdd \cat C)$ is equivalent to
  $\prod_{i\leq n} K(\cat C_{w=i})$ under the map induced by
  $q = q_n \oplus q_{n-1}\tr_{n-1} \oplus q_{n-2}\tr_{n-2}\oplus
  \cdots$.
\end{corollary}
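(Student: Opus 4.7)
The plan is a descending induction that iteratively applies the previous proposition, combined with a filtered-colimit argument to handle the infinite product. First I would prove by induction on $k\geq 0$ that there is an equivalence
\begin{equation*}
    K(\cell_n^\bdd \cat C) \simeq K(\cell_{n-k}^\bdd \cat C) \x \prod_{n-k < i \leq n} K(\cat C_{w=i})
\end{equation*}
given by $\tr_{n-k} \oplus q_{n-k+1}\tr_{n-k+1} \oplus \cdots \oplus q_{n-1}\tr_{n-1} \oplus q_n$. The case $k=0$ is vacuous. For the inductive step, apply the previous proposition to $\cell_{n-k}^\bdd \cat C$ to split off a factor of $K(\cat C_{w=n-k})$ via $\tr_{n-k-1} \oplus q_{n-k}$, and use the identification $\tr_{n-k-1}\tr_{n-k} \simeq \tr_{n-k-1}$ to verify the composite agrees with the asserted formula.

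To go from the finite-$k$ statement to the claimed infinite product, I would identify $\cell_n^\bdd \cat C$ with the filtered colimit, over $m\leq n$, of the full subcategories $\cell_{[m,n]}^\bdd \cat C$ of cell complexes $A$ satisfying both $A\simeq \tr_n A$ and $A\simeq \cotr_m A$. This is exactly the content of being bounded: every object of $\cell_n^\bdd \cat C$ lies in some $\cell_{[m,n]}^\bdd \cat C$. Restricting the induction above (with $k=n-m$) to this subcategory collapses the residual factor, since on $\cell_{[m,n]}^\bdd \cat C$ the functor $\tr_{m-1}$ lands in zero cell complexes, yielding
\begin{equation*}
    K(\cell_{[m,n]}^\bdd \cat C) \simeq \prod_{m \leq i \leq n} K(\cat C_{w=i}).
\end{equation*}
Because algebraic $K$-theory commutes with filtered colimits, taking $m\to -\infty$ produces the desired equivalence, and at each finite stage the map is literally $q$ restricted to the relevant degrees.

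The principal obstacle I anticipate is the bookkeeping: verifying that the iterated splittings assemble coherently into the infinite-index map $q = q_n \oplus q_{n-1}\tr_{n-1} \oplus \cdots$ rather than merely producing a zigzag of equivalences. Concretely, this requires naturality of $\tr$ and $q$ as exact endofunctors of $\cell^\bdd_n \cat C$, the relation $\tr_j \tr_k \simeq \tr_{\min(j,k)}$, and matching the transition maps of the filtered system $\{\cell_{[m,n]}^\bdd \cat C\}_m$ on $K$-theory with the standard inclusion of a finite product as the summand with zero in the new coordinate. The target $\prod_{i\leq n} K(\cat C_{w=i})$ is then interpreted via this directed colimit of finite products, which is consistent with the map $q$ since for every $A$ only finitely many components $q_i\tr_i A$ are nonzero.
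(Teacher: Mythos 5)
Your proposal is correct and matches the paper's approach: the paper obtains this corollary by exactly the descending induction on the preceding proposition that you describe (its stated proof is the single sentence ``We can induct down on $n$''). Your extra care with the passage to the infinite target --- reading $\prod_{i\le n} K(\cat C_{w=i})$ as the filtered colimit over $m$ of the finite products supported on $[m,n]$, justified by boundedness and the fact that $K$-theory commutes with filtered colimits --- fills in a step the paper leaves implicit rather than taking a different route.
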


Now we can factor the pushout square from the fibration theorem into
two pushout squares
\begin{equation*}
  \begin{tikzcd}
    K( (\cell_n^\bdd \cat C)^v) \ar[r] \ar[d]
    \po & K( \cell_{n-1}^\bdd \cat C ) \x K(\cat C_{w=n}) \ar[d] \\
    * \ar[r] & P 
  \end{tikzcd}
\end{equation*}
and
\begin{equation*}
  \begin{tikzcd}
    K( \cell_{n-1}^\bdd \cat C ) \x K(\cat C_{w=n}) \ar[r, "\simeq"]
    \ar[d] \po &
    K( \cell_n^\bdd \cat C) \ar[d] \\
    P \ar[r] & K(\cell_n^\bdd \cat C, v\cell_n^\bdd \cat C)
  \end{tikzcd}
\end{equation*}
and reduce the proof to two outstanding claims.
\begin{claim}\label{claim2}
  $K(\cell_n^\bdd\cat C,v\cell_n^\bdd \cat C) \simeq K( (v\cell_n^\bdd
  \cat C)\inv \cell_n^\bdd \cat C)$.
\end{claim}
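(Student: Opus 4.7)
The plan is to reduce this claim to Barwick's comparison theorem identifying, under the enough-cofibrations hypothesis, the $K$-theory of a labeled Waldhausen $\oo$-category $K(\cat A, w\cat A) := K(\cat B(\cat A, w\cat A))$ with the $K$-theory of the $\oo$-categorical localization $(w\cat A)\inv \cat A$ equipped with its inherited Waldhausen pair structure.

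The first step is to verify that the pair $(\cell_n^\bdd \cat C, v\cell_n^\bdd \cat C)$ has enough cofibrations. This is an immediate adaptation of Proposition \ref{prop_enough_cofs}: the mapping cylinder $M$ constructed there is built from a levelwise pushout with $(Mf)_i \simeq B_i \cup_{A_{i-1}} A_i$, and this formula visibly preserves the degree bound, since for $i \geq n$ the objects $A_{i-1}, A_i, B_i$ all stabilize to $A_n$ and $B_n$. Hence $M$ restricts to an endofunctor on the arrow category of $\cell_n^\bdd \cat C$. The three remaining verifications — that $A \to Mf$ is ingressive in $\cell_n^\bdd \cat C$, that $M$ preserves $v$-equivalences, and that the natural map $B \to Mf$ is a $v$-equivalence — go through verbatim from the bounded case, since all of those arguments take place at a single level.

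With enough cofibrations established, one applies Barwick's comparison to conclude the claim. The key observation is that the natural functor $\cell_n^\bdd \cat C \to \cat B(\cell_n^\bdd \cat C, v\cell_n^\bdd \cat C)$ exhibits the latter as a model for the localization at $v\cell_n^\bdd \cat C$. Alternatively, one can mimic the argument of Proposition \ref{localization_prop}: produce a comparison exact functor from the localization to the virtual Waldhausen $\oo$-category (or vice versa), use the mapping-cylinder presentation to show essential surjectivity and the reflection-of-equivalences property, and then invoke Fiore's approximation theorem (\cite{fiore}*{4.10}) to deduce a $K$-theory equivalence.

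The main obstacle will be matching formal constructions precisely: $\cat B(\cat A, w\cat A)$ is a virtual Waldhausen $\oo$-category while $(w\cat A)\inv \cat A$ is an $\oo$-categorical localization, and one must exhibit a structured comparison between them respecting pair structures. Once the enough-cofibrations hypothesis is in hand, however, this is exactly the situation Barwick's machinery is designed to handle, and no new ideas beyond those already deployed in the proofs of Propositions \ref{prop_enough_cofs} and \ref{localization_prop} should be required.
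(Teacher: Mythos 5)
Your proposal has a genuine gap at its core step. The statement to be proven \emph{is} the comparison between the $K$-theory of the labeled pair --- that is, of the virtual Waldhausen $\oo$-category $\cat B(\cell_n^\bdd\cat C, v\cell_n^\bdd\cat C)$ --- and the $K$-theory of the $\oo$-categorical localization. You defer this to an unnamed ``Barwick comparison theorem,'' but no such off-the-shelf result is available, and your closing sentence, that this is ``exactly the situation Barwick's machinery is designed to handle,'' restates the claim rather than proving it. Your fallback via Fiore's approximation theorem also does not apply as stated: Fiore's theorem compares two Waldhausen $\oo$-categories along an exact functor, whereas $\cat B(\cat A, w\cat A)$ is only a \emph{virtual} Waldhausen $\oo$-category, so there is no exact functor of Waldhausen $\oo$-categories to feed into it. (Proposition \ref{localization_prop} uses Fiore to compare the localization with $\cat C$, both honest Waldhausen $\oo$-categories; that argument does not transport to the present comparison.)

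What is actually needed is more concrete. By \cite{barwick}*{10.15} the $K$-theory space of the labeled pair is $\loops\colim vS_\bullet(\cell_n^\bdd\cat C)$, the loop space of the colimit of the $v$-labeled nerve of the $S_\bullet$-construction, while the $K$-theory space of the localization is $\loops\colim\iota S_\bullet((v\cell_n^\bdd\cat C)\inv\cell_n^\bdd\cat C)$. The localization functor induces a simplicial map carrying the $v$-nerve into the $\iota$-nerve; since $v$-labeled edges become equivalences this map is fully faithful levelwise, and essential surjectivity is checked by an explicit zig-zag argument: every zig-zag in the localization receives a map, through $v$-equivalences, from a zig-zag in which only identity arrows are reversed, and the latter lies in the image. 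This levelwise comparison of nerves is the missing content of your proof. Your first step --- enough cofibrations for the truncated pair, adapted from Proposition \ref{prop_enough_cofs} --- is correct and is indeed required for the hypotheses of the cited results, but it is not where the difficulty of the claim lies.
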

\begin{claim}\label{claim0}
  The pushout $P$ is equivalent to
  $K(\cat C_{w=n}) \simeq K(\cat C_{\heart w})$.
\end{claim}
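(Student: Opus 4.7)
The plan is to identify the map $K((\cell_n^\bdd \cat C)^v) \to K(\cell_{n-1}^\bdd \cat C) \times K(\cat C_{w=n})$ explicitly and show its cofiber is $K(\cat C_{w=n})$. The strategy mirrors the proof of Corollary \ref{cell_prod_heart}, applied internally to the subcategory of $v$-acyclic cell complexes.

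First, I will compute $K((\cell_n^\bdd \cat C)^v)$. For each $i \leq n-1$, let $\pi_i : (\cell_n^\bdd \cat C)^v \to \cat C_{w=i}$ send $A$ to $A_i$, which has pure weight $w=i$ by Proposition \ref{prop14}, and let $\cone_i : \cat C_{w=i} \to (\cell_n^\bdd \cat C)^v$ send $X$ to the $v$-acyclic complex $0 \to X \to 0$ at positions $i-1, i, i+1$. I claim there is a cofiber sequence of exact endofunctors $r \hookrightarrow \id \twoheadrightarrow \cone_{n-1} \circ \pi_{n-1}$ on $(\cell_n^\bdd \cat C)^v$, where $r$ kills levels $n-1$ and $n$, analogous to $\tr_{n-1} \to \id \to q_n$ on $\cell_n^\bdd \cat C$. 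Orthogonality and idempotency of $r$ and $\cone_{n-1} \circ \pi_{n-1}$ follow formally, and the image of $r$ identifies with $(\cell_{n-1}^\bdd \cat C)^v$ via truncation. Iterating additivity gives
\begin{equation*}
K((\cell_n^\bdd \cat C)^v) \simeq \bigoplus_{i \leq n-1} K(\cat C_{w=i}),
\end{equation*}
one fewer summand than $K(\cell_n^\bdd \cat C) \simeq \bigoplus_{i \leq n} K(\cat C_{w=i})$, with the $i$-th generator represented by $\cone_i(X)$ for $X \in \cat C_{w=i}$.

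Next, additivity applied to the cofiber sequence $\tr_{n-1} \to \id \to q_n$ restricted to $(\cell_n^\bdd \cat C)^v$ shows the inclusion-induced map on $K$-theory is $A \mapsto (\tr_{n-1} A, q_n A)$; the $v$-acyclic condition forces $q_n A \simeq \Sigma A_{n-1}$. In the bases just described, $\tr_{n-1}(\cone_i(X))$ has cells $X$ at level $i$ and $\Sigma X$ at level $i+1$ (for $i \leq n-2$), so $\tr_{n-1}$ is represented by a lower-triangular transformation with identity diagonal and suspension sub-diagonal, hence invertible. Consequently $\tr_{n-1}$ induces an equivalence on $K$-theory, and the full map becomes the graph $y \mapsto (y, g(y))$ of the map $g : K(\cell_{n-1}^\bdd \cat C) \to K(\cat C_{w=n})$ induced by $B \mapsto \Sigma B_{n-1}$. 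The automorphism $(y, z) \mapsto (y, z - g(y))$ of the codomain transforms the graph into the split inclusion into the first factor, whose cofiber is $K(\cat C_{w=n})$. Therefore $P \simeq K(\cat C_{w=n}) \simeq K(\cat C_{\heart w})$.

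The principal technical obstacle is verifying the cofiber sequences of endofunctors used above: the latching conditions of Lemma \ref{cellingressions} and the weight conditions on the resulting cofibers must all hold in the Waldhausen $\oo$-category $(\cell_n^\bdd \cat C)^v$. Both succeed precisely because Proposition \ref{prop14} ensures every level of a $v$-acyclic complex has pure weight, so the cone construction and its complementary truncation produce valid cell complexes with cofibers in the required weight bands.
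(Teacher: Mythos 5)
Your proposal is correct and follows essentially the same route as the paper: both identify $K((\cell_n^\bdd\cat C)^v)$ with $\bigoplus_{i\leq n-1}K(\cat C_{w=i})$ using cone functors and additivity, show that $\tr_{n-1}$ restricted to the $v$-acyclics induces an equivalence onto $K(\cell_{n-1}^\bdd\cat C)$ (your unitriangular change-of-basis is exactly the paper's inductive ``cone off the cells'' argument for essential surjectivity onto the image of the wedge functor $W$), and then read off $P$ as the cofiber of a graph map over an equivalence. The only caveat worth noting is that constructing the natural transformation $\id\to\cone_{n-1}\circ\pi_{n-1}$ needs coherent nullhomotopies beyond the $\pi_0$-vanishing supplied by weight orthogonality, but this is the same kind of coherence the paper itself handles via its fibration-style constructions of $c_i$ and $\cone$.
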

As pushouts of equivalences are equivalences, the bottom map in the
right square is an equivalence. Hence, these claims and proposition
\ref{localization_prop} let us conclude that $K(\cat C_{\heart w})
\simeq K(\cat C)$ after passing to the colimit over $n$. The
equivalence is induced by the inclusion of $\cat C_{w=n}$ into
$\cell_n^\bdd \cat C$ as constant cellular filtrations at degree
$n$. Under localization at the equivalences $v\cell_n^\bdd\cat C$,
this corresponds to the inclusion map $\cat C_{\heart w}\simeq \cat
C_{w=n}\to \cat C$ as asserted in the theorem.

\begin{proof}[Proof of claim \ref{claim2}]
  \cite{barwick}*{10.15} implies that the $K$-theory space of the pair
  \begin{equation*}
    (\cell_n^\bdd \cat C, v\cell_n^\bdd \cat C)
  \end{equation*}
  is weakly homotopy
  equivalent to
  \begin{equation*}
    \loops \colim vS_\bullet( \cell_n^\bdd \cat C)
  \end{equation*}
  where the nerve direction is taken over maps in
  $v\cell_n^\bdd \cat C$. Likewise, the $K$-theory space of the
  localization is weakly equivalent to
  \begin{equation*}
    \loops \colim \iota S_\bullet ((v\cell_n^\bdd\cat C)\inv
    \cell_n^\bdd \cat C)
  \end{equation*}
  and localization induces a simplicial functor
  \begin{equation*}
    L_\bullet:S_\bullet (\cell_n^\bdd \cat C) \to S_\bullet ( (v\cell_n^\bdd \cat C)\inv \cell_n^\bdd \cat C).
  \end{equation*}
  Since localization carries $v$-labeled edges to equivalences, this
  maps the $v$-nerve fully faithfully into the $\iota$-nerve. All we
  need to check is essential surjectivity. It suffices to check
  levelwise. This will follow from general behavior of diagrams and
  localizations.
  
  Any zig-zag
  \begin{equation*}
    \begin{tikzcd}
      A & A' \ar[l, "\sim_v"] \ar[r] & B
    \end{tikzcd}
    \textrm{or}
    \begin{tikzcd}
      A \ar[r] & B & B' \ar[l, "\sim_v"]
    \end{tikzcd}
  \end{equation*}
  in the simplicial localization receives a map from a zig-zag with
  only identity arrows reversed
  \begin{equation*}
    \begin{tikzcd}
      A & A' \ar[l, "\sim_v"] \ar[r] & B \\
      A' \ar[u, "\sim_v"] \ar[r, equals] & A' \ar[u, equals] \ar[r] &
      B \ar[u,equals]
    \end{tikzcd}
    \textrm{or}
    \begin{tikzcd}
      A \ar[r] & B & B' \ar[l, "\sim_v"] \\
      A \ar[u, equals] \ar[r] & B' \ar[u, "\sim_v"] & B' \ar[l,
      equals] \ar[u,equals]
    \end{tikzcd}
  \end{equation*}
  and we note that the vertical maps are all $v$-equivalences, hence
  equivalences in the localization. The lower zig-zags are hit by the
  localization functor. Hence, any sequence of maps in
  $S_\bullet( (v \cell^\bdd_n \cat C)\inv \cell_n^\bdd \cat C)$
  receives a map from a sequence of maps in the image of $L$. We
  conclude that $L$ induces a weak equivalence of the nerves as
  desired.
\end{proof}

Claim \ref{claim0} will follow from proving that the truncation
functor induces an equivalence on $K$-theory from
$(\cell_n^\bdd \cat C)^v$ to $\cell_{n-1}^\bdd \cat C$. We will prove this
by identifying $K((\cell_n^\bdd \cat C)^v)$ with
$\prod_{i\leq n-1} K(\cat C_{w=i})$ in $K(\cell_n^\bdd \cat C)$ which
is identified with $\prod_{i\leq n} K(\cat C_{w=i})$ using corollary
\ref{cell_prod_heart}.
\begin{lemma}
  The truncation functor
  $\tr_{n-1}: (\cell^\bdd_n \cat C )^v \to \cell_{n-1}^\bdd \cat C$
  induces an equivalence on $K$-theory.
\end{lemma}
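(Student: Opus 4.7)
The plan is to identify $K((\cell^\bdd_n \cat C)^v)$ with $\prod_{i \leq n-1} K(\cat C_{w=i})$ sitting inside the identification $K(\cell^\bdd_n \cat C) \simeq \prod_{i \leq n} K(\cat C_{w=i})$ from Corollary \ref{cell_prod_heart}, and then to check that under this identification together with the analogous one for $K(\cell^\bdd_{n-1} \cat C)$, the map $K(\tr_{n-1})$ corresponds to the identity on $\prod_{i \leq n-1} K(\cat C_{w=i})$.

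First, Corollary \ref{cell_prod_heart} applied to $\cell^\bdd_{n-1} \cat C$ yields $K(\cell^\bdd_{n-1} \cat C) \simeq \prod_{i \leq n-1} K(\cat C_{w=i})$ via $(q_{n-1}, q_{n-2}\tr_{n-2}, \ldots)$. Composing with $K(\tr_{n-1})$ and using that $\tr_i \comp \tr_{n-1} = \tr_i$ for $i \leq n-1$, the map I need to show is an equivalence is exactly $(q_{n-1}\tr_{n-1}, q_{n-2}\tr_{n-2}, \ldots) : K((\cell^\bdd_n \cat C)^v) \to \prod_{i \leq n-1} K(\cat C_{w=i})$.

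To establish this, I would study the inclusion $j : (\cell^\bdd_n \cat C)^v \hookrightarrow \cell^\bdd_n \cat C$. Restricting the cofiber sequence of exact endofunctors $\tr_{n-1} \to \id \to q_n$ on $\cell^\bdd_n \cat C$ along $j$ and applying Barwick's additivity theorem \cite{barwick}*{7.4.(5)} gives $K(j) \simeq K(\tr_{n-1} j) + K(q_n j)$ as maps $K((\cell^\bdd_n \cat C)^v) \to K(\cell^\bdd_n \cat C)$. By Proposition \ref{prop14}, each $A \in (\cell^\bdd_n \cat C)^v$ has $A_n \simeq 0$ and $A_{n-1} \in \cat C_{w=n-1}$, so $q_n(A) \simeq \Sigma A_{n-1}$, and hence $q_n j$ factors as $c_n \comp \Sigma \comp \colim_\Z \comp \tr_{n-1}$, where $c_n : \cat C_{w=n} \to \cell^\bdd_n \cat C$ is the constant-cell-filtration functor and $\Sigma : \cat C_{w=n-1} \to \cat C_{w=n}$ is the suspension equivalence. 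A second application of additivity, now to the cofiber sequences $\tr_{i-1} \to \tr_i \to q_i$ on $\cell^\bdd_{n-1} \cat C$, decomposes $K(\colim_\Z)$ as a signed sum of $K(q_i \tr_i)$ for $i \leq n-1$ under the suspension identifications $\Sigma^{n-1-i} : \cat C_{w=i} \to \cat C_{w=n-1}$.

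Combining these, under the decomposition $K(\cell^\bdd_n \cat C) \simeq \prod_{i \leq n} K(\cat C_{w=i})$ from Corollary \ref{cell_prod_heart}, the top coordinate $K(q_n j)$ of $K(j)$ is completely determined by the remaining $n$ coordinates $K(q_i \tr_i j) = K(q_i \tr_i)$ for $i \leq n-1$. Hence the image of $K(j)$ lies in $\prod_{i \leq n} K(\cat C_{w=i})$ as the graph of an explicit map $\prod_{i \leq n-1} K(\cat C_{w=i}) \to K(\cat C_{w=n})$, so projecting to the first $n$ factors will be an equivalence $K((\cell^\bdd_n \cat C)^v) \simeq \prod_{i \leq n-1} K(\cat C_{w=i})$ which agrees with the assembled map $(q_{n-1}\tr_{n-1}, q_{n-2}\tr_{n-2}, \ldots)$, yielding $K(\tr_{n-1})$ as an equivalence. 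The chief technical obstacle will be the twofold application of additivity together with the careful bookkeeping of the suspension equivalences between the categories $\cat C_{w=i}$ needed to express $K(q_n j)$ as an explicit combination of the $K(q_i \tr_i)$; once this graph relation is in hand, the remaining verification that projection out of a graph is an equivalence is formal.
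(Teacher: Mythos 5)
Your reduction of the lemma to showing that the assembled map $T = (K(q_{n-1}\tr_{n-1}), K(q_{n-2}\tr_{n-2}), \ldots) \colon K((\cell^\bdd_n \cat C)^v) \to \prod_{i\leq n-1} K(\cat C_{w=i})$ is an equivalence is correct, and your observation that $q_n(A) \simeq \Sigma A_{n-1}$ on acyclics (via Proposition \ref{prop14}) is a genuine and useful fact. But the final step is a non sequitur. Establishing that the $n$-th coordinate of $K(j)$ is a function $\phi$ of the other coordinates only shows that $K(j)$ factors as $\Gamma_\phi \circ T$ through the graph of $\phi$; it gives no information about whether $T$ itself is an equivalence. (Compare: if $f = (\pi_A f, \phi \circ \pi_A f)$, then $\pi_A f$ can still be anything, including zero.) Since $T$ being an equivalence \emph{is} the content of the lemma, the argument is circular at exactly the point where work is required. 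You would need an independent input --- for instance an explicit functor in the other direction inducing an inverse on $K$-theory --- and the proposal supplies none.

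There is also a secondary problem in the ``second application of additivity.'' The telescoping cofiber sequences $A_{i-1} \to A_i \to A_i/A_{i-1}$ mix weights ($A_{i-1} \in \cat C_{w=i-1}$, $A_i, A_i/A_{i-1} \in \cat C_{w=i}$ for acyclic $A$), so additivity can only be applied with target the ambient category $\cat C$. The resulting identity $K(\colim_\Z) = \sum_i K(q_i\tr_i)$ therefore holds only as maps into $K(\cat C)$, not into $K(\cat C_{w=n-1})$; no amount of suspension bookkeeping aligns the three terms of a single one of these cofiber sequences into one $\cat C_{w=j}$. So even the graph relation in $\prod_{i\leq n} K(\cat C_{w=i})$ is not actually established as stated.

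For contrast, the paper's proof goes the other way around: it notes that $\tr_{n-1}$ is fully faithful on acyclics, and then checks essential surjectivity onto the image of the wedge-of-constant-filtrations functor $W$ (which already induces an equivalence $\prod_{i\leq n-1} K(\cat C_{w=i}) \simeq K(\cell^\bdd_{n-1}\cat C)$ by Corollary \ref{cell_prod_heart}), using the cone functor $\cone(a_{n-1})$ --- the acyclic complex $0 \to a_{n-1} \to 0'$ --- to hit the top cell and inducting downward. The cone construction is the missing ingredient in your proposal: it is what produces acyclic $n$-complexes realizing prescribed truncations, and hence the surjectivity that your graph argument cannot supply.
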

\begin{proof}
  The truncation functor maps the acyclic $n$-cell complexes fully and
  faithfully into the $(n-1)$-cell complexes by forgetting the zero
  object at degree $n$. By corollary \ref{cell_prod_heart},
  $q: \cell^\bdd_{n-1}\cat C \to \prod_{i\leq n-1} \cat C_{w=i}$
  induces an equivalence on $K$-theory whose inverse is induced by the
  map $W$ which forms the wedge sum of constant cell
  filtrations. Hence it suffices to check that $\tr_{n-1}$ is
  essentially surjective onto the image of $W$ to induce an
  equivalence of $K$-theory. This will follow by inducting down on
  cells. In particular, we have the following diagram of functors
  \begin{equation*}
    \begin{tikzcd}
      (\cell^\bdd_{n-1} \cat C )^v \ar[r, "\tr_{n-2}"] \ar[d, "\incl"]
      & \cell_{n-2}^\bdd \x \cat C_{w=n-1} \cat C \ar[r, "q", shift
      left] \ar[d, "\vee"] & \cat C_{w=n-1} \x \prod_{i\leq n-2} \cat
      C_{w=i} \ar[d, "\simeq"]
      \ar[l, "\id \x W", shift left] \\
      (\cell_n^\bdd \cat C )^v \ar[r, "\tr_{n-1}"] & \cell_{n-1}
      \ar[r, "q", shift left] & \prod_{i \leq n-1} \cat C_{w=i} \ar[l,
      "W", shift left]
    \end{tikzcd}
  \end{equation*}
  where all functors in the right square induce equivalences on
  $K$-theory. A bounded cell filtration in $\cell^\bdd_{n-1} \cat C$
  corresponds to a sequence of level quotients
  $(a_i) \in \prod_{i\leq n-1} \cat C_{w=i}$ where all but finitely
  many are zero objects. $W$ sends this to the wedge
  $\bigvee_{i\leq n-1} a_i$ where the weight of each $a_i$ indicates
  in which degree its filtration is concentrated in
  $\cell_{n-1}^\bdd \cat C$.

  If we inductively assume that $\bigvee_{i\leq n-2} a_i$ is in the
  essential image of $\tr_{n-2}$ in $\cell_{n-2}^\bdd \cat C$, then
  observe that $\cone(a_{n-1})$ is sent to $a_{n-1}$ under
  $\tr_{n-1}$, so $a_{n-1} \vee \bigvee_{i \leq n-2} a_i$ will also be
  in the essential image. Boundedness implies that this induction
  terminates in finite steps once all cells are coned off.
\end{proof}

The homotopy fibers of the vertical maps in the pushout square
\begin{equation*}
  \begin{tikzcd}
    K((\cell_n^\bdd \cat C )^v) \ar[d] \ar[r] \po &
    K(\cell_{n-1}^\bdd \cat C ) \ar[d] \\
    * \ar[r] & P
  \end{tikzcd}
\end{equation*}
must agree, so in light of the previous lemma, we conclude that
$P \simeq K(\cat C_{w=n})$. This completes the proof of claim
\ref{claim0} as well as the proof of the main theorem.

\subsection{On the $K$-theory of the heart of a weight structure}

Let $\cat C_{\heart w}$ denote the heart of a weight structure on a
stable $\oo $-category $\cat C$. In this section, we describe the
$K$-theory of $\cat C_{\heart w}$ in terms of equivalence classes of
objects and their automorphisms.

One key observation about $\cat C_{\heart w}$ allows its $K$-theory to
be simply described: cofiber sequences in $\cat C_{\heart wx}$ all split.

\begin{proposition}\label{split-heart}
  If $f:A\to B$ is an ingression in $\cat C_{\heart w}$, then $f$ splits
  in the homotopy category $h\cat C$.
\end{proposition}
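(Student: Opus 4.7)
The plan is to exhibit the cofiber sequence attached to $f$ and argue that its connecting map is forced to be zero by the orthogonality axiom of the weight structure.

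By definition of the pair structure on $\cat C_{\heart w}$, the ingression $f: A \to B$ has cofiber $C$ lying in $\cat C_{\heart w}$. So we obtain a cofiber sequence $A \to B \to C$ in $\cat C$ in which all three objects lie in $\cat C_{\heart w} = \cat C_{w \leq 0} \cap \cat C_{w \geq 0}$. Passing to the homotopy category $h\cat C$, rotation gives a distinguished triangle
\begin{equation*}
  \begin{tikzcd}
    A \ar[r, "f"] & B \ar[r] & C \ar[r, "\partial"] & \Sigma A.
  \end{tikzcd}
\end{equation*}

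The next step is to show $\partial = 0$ in $h\cat C$. Since $A \in \cat C_{w \geq 0}$, the conventions $\cat C_{w \geq n} = \Sigma^n \cat C_{w \geq 0}$ give $\Sigma A \in \cat C_{w \geq 1}$. On the other hand $C \in \cat C_{\heart w} \subseteq \cat C_{w \leq 0}$. The orthogonality axiom of a weight structure then forces $\Hom_{h\cat C}(C, \Sigma A) = 0$, so $\partial$ is the zero morphism.

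To conclude, I would appeal to the standard fact that a distinguished triangle in a triangulated category with vanishing connecting morphism splits: the vanishing of $\partial$ means the long exact sequence obtained by applying $h\cat C(C, -)$ to the triangle lets us lift $\id_C$ to a section $s: C \to B$ of $B \to C$, and then the usual $2$-of-$3$-style argument (or equivalently the isomorphism $B \simeq A \oplus C$ produced by the splitting) gives a retraction $r: B \to A$ with $r \circ f = \id_A$ in $h\cat C$. No step here is really difficult; the crux is simply identifying the weights of $C$ and $\Sigma A$ on opposite sides of the orthogonality, which is immediate from the hypothesis that $C$ lies in the heart.
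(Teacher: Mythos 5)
Your proof is correct and is essentially the paper's argument: both hinge on the orthogonality $\Hom_{h\cat C}(C,\Sigma A)=0$ coming from $C\in\cat C_{w\leq 0}$ and $\Sigma A\in\cat C_{w\geq 1}$. The paper packages this via its Lemma \ref{lem1} to extend $\id_A$ to a map of triangles and read off the retraction $g\comp f\simeq\id_A$ directly, whereas you kill the connecting map and invoke the standard splitting of a triangle with vanishing third map; the difference is cosmetic.
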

\begin{proof}
  Write $A\to B\to C$ for the cofiber sequence associated to $f$,
  considered as an exact triangle in $h\cat C$. By assumption, $C$
  lies in $\cat C_{\heart w}$ as well. Lemma \ref{lem1} implies that
  there are maps $g$ and $h$ extending $\id:A\to A$ to a map of exact
  triangles.
  \begin{equation*}
    \begin{tikzcd}
      A \ar[r, "f"] \ar[d, "\id"] & B \ar[d, dotted, "g"] \ar[r] & C
      \ar[r] \ar[d, dotted, "h"] & \Sigma A \ar[d, "\id"] \\
      A \ar[r, "\id"] & A \ar[r] & 0 \ar[r] & \Sigma A
    \end{tikzcd}
  \end{equation*}
  Thus $g\comp f \simeq \id_A$ in $h\cat C$.
\end{proof}
Recall here that ingressions in $\cat C_{\heart w}$ are those maps
which are ingressions in $\cat C$ with cofiber in $\cat C_{\heart w}$.

This case was studied by Waldhausen in \cite{1126}*{\S 1.8} and his
approach applied directly. Write $\cat D$ for any pointed Waldhausen
$\oo$-category with finite coproducts where all ingressions split up
to homotopy, even though the only case of interest to us at this point
is $\cat D = \cat C_{\heart w}$.

Define $N_\bullet \cat D$ to be the nerve of $\cat D$ with respect to
the coproduct, so $N_n \cat D$ is $\cat D^n$ and the face map is
\begin{equation*}
  d_i(X_1,\ldots, X_n) =
  \begin{cases}
    (X_2,\ldots, X_n) & i=0 \\
    (X_1,\ldots, X_i \vee X_{i+1}, \ldots, X_n) & 0 < i < n \\
    (X_1,\ldots, X_{n-1}) & i = n
  \end{cases}
\end{equation*}
and the degeneracies wedge with a zero object. There is an inclusion
$N_\bullet \cat D \to S_\bullet \cat D$ that takes $(X_1,\ldots, X_n)$
to
\begin{equation*}
  \begin{tikzcd}
    * \ar[r, tail] & X_1 \ar[r, tail] \ar[d] & X_1 \vee X_2 \ar[d] \ar[r, tail, "\cdots" description]  & X_1\vee \cdots \vee X_n \ar[d]  \\
    & * \ar[r, tail] \ar[drr, "\ddots" description, phantom] & X_2 \ar[r, tail, "\cdots" description]  & X_2\vee \cdots \vee X_n \ar[d, "\vdots" description, phantom]\\ 
    & & * \ar[r, tail] & X_n\ar[d] \\
    & & & *
  \end{tikzcd}
\end{equation*}
But the assumption that sequences of ingressions split in $\cat D$
implies that this inclusion induces an equivalence
$w N_\bullet \cat D \to w S_\bullet \cat D$. Waldhausen's own argument
for this is \cite{1126}*{Proposition 1.8.7} and reduces to inductively
studying the map.
\begin{equation*}
  \begin{tikzcd}[row sep = tiny]
    w N_\bullet S_n \cat D \ar[r] & w N_\bullet S_{n-1} \cat D \x w
    N_\bullet \cat D \\
    (A_1 \cof \cdots \cof A_n) \ar[r, maps to] &
    ((A_1\cof \cdots \cof A_{n-1}), A_n/A_{n-1})
  \end{tikzcd}
\end{equation*}
This is an equivalence if for every object $X$ in $\cat D$, the map
$j:\cat D \to \cat D_X$, the category of ingressions under $X$, that
sends $A$ to $X\cof X \vee A$ induces an equivalence
$w N_\bullet \cat D \to w N_\bullet \cat D_X$. This is ensured by the
splitting hypothesis on $\cat D$, since any $X\cof A$ is equivalent
via a zig-zag to $X\cof X \vee A/X$ and the nerve $N_\bullet$
identifies $X \cof X$ to $X\cof X\vee A/X$ by the face corresponding
to $(X,X\vee A/X)$. We summarize Waldhausen's equivalence.
\begin{proposition}\label{NequalsS}
  If every ingression splits in $\cat D$, a Waldhausen $\oo$-category,
  then $w N_\bullet \cat D \simeq w S_\bullet \cat D$.
\end{proposition}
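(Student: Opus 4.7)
The plan is to follow Waldhausen's original argument \cite{1126}*{1.8.7} and establish the equivalence by induction on $n$, using the projection of $S_n$ onto its top quotient. At simplicial level $n=0$ both sides are a point and at $n=1$ both recover $w\cat D$, so the comparison map $wN_\bullet \cat D \to wS_\bullet \cat D$ is already an equivalence in low degrees. For the inductive step I would consider the projection
\begin{equation*}
wN_\bullet S_n \cat D \to wN_\bullet S_{n-1}\cat D \times wN_\bullet \cat D
\end{equation*}
sending $(A_1 \cof \cdots \cof A_n)$ to $((A_1 \cof \cdots \cof A_{n-1}), A_n/A_{n-1})$, together with its analog for $N_\bullet$. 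Showing these projections are equivalences reduces, using the fibration/additivity properties of the $S_\bullet$ construction in the Waldhausen $\oo$-categorical setting, to analyzing the fiber over a fixed object $X$, which consists of ingressions out of $X$.

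Next I would formalize this reduction. For each $X \in \cat D$ let $\cat D_X$ denote the Waldhausen $\oo$-category of ingressions under $X$. The key claim is that the functor $j: \cat D \to \cat D_X$ sending $A$ to $(X \cof X \vee A)$ induces an equivalence $wN_\bullet \cat D \to wN_\bullet \cat D_X$. A candidate quasi-inverse is the cofiber functor $(X\cof A) \mapsto A/X$. The splitting hypothesis, which in our motivating case is Proposition \ref{split-heart}, guarantees that for any ingression $X\cof A$ there is a homotopy equivalence to $X\cof X \vee A/X$ in $\cat D_X$, witnessing essential surjectivity on $w$-equivalence classes. Moreover, the face map $d_1: N_2\cat D_X \to N_1 \cat D_X$ realizes precisely the zig-zag of $w$-equivalences between $X\cof A$ and $X\cof X\vee A/X$ simplicially, which is the crucial combinatorial fact exploited by Waldhausen.

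The main obstacle will be the simplicial coherence of this zig-zag: one must check that the identification of $X\cof A$ with $X\cof X\vee A/X$ is implemented uniformly across iterated face and degeneracy maps, so that $j$ and the cofiber functor produce mutually inverse homotopy equivalences on geometric realizations. In the classical Waldhausen setting this is exactly \cite{1126}*{1.8.7}, and the $\oo$-categorical upgrade is formal: all of the maneuvers---forming pushouts of ingressions, producing splittings up to homotopy, and verifying compatibility with the nerve structure maps---use only the Waldhausen $\oo$-category axioms recalled in section \ref{background} together with the hypothesis that ingressions split. Once the equivalence $wN_\bullet \cat D \simeq wN_\bullet \cat D_X$ is verified for every $X$, the induction closes and the proposition follows.
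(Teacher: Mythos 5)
Your proposal follows exactly the route the paper takes: the inductive reduction via the projection $wN_\bullet S_n\cat D \to wN_\bullet S_{n-1}\cat D \times wN_\bullet\cat D$, the key claim that $j:\cat D\to\cat D_X$, $A\mapsto (X\cof X\vee A)$, induces an equivalence on $wN_\bullet$, and the use of the splitting hypothesis together with the face of $N_\bullet$ corresponding to $(X, X\vee A/X)$ to realize the zig-zag between $X\cof A$ and $X\cof X\vee A/X$ --- all of which is the paper's (and Waldhausen's 1.8.7) argument. The proposal is correct and essentially identical to the paper's proof, differing only in spelling out slightly more of the coherence bookkeeping.
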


Let $[X]$ denote an equivalence class of objects in $\cat D$.
$\Aut(X)$ is a topological monoid with classifying space
$B\Aut(X)$. Since ingressions split in $\cat D$, we have a map
$\Aut(X) \to \Aut(X')$ for any ingression $X\cof X'$, taking $f:X\to
X$ to $f\vee \id$ from $X \vee X'/X \to X\vee X'/X$. We consider the
homotopy colimit $\hocolim_{[X]\in \cat D_\dagger} B\Aut(X)$, which we
will denote $\hocolim_{\cat D_\dagger} B\Aut(X)$. For example, when
$\cat D = \Proj(R)$ is the category of finitely-generated projective
$R$-modules, one has the cofinal collection of objects $\{R^n\}$ with
block-sum maps $\GL_n(R) = \Aut(R^n) \to \GL_{n+1}(R) = \Aut(R^{n+1})$
and $\hocolim_{\Proj(R)_\dagger} B \Aut(X) \simeq B\GL(R)$.

We form the group completion $(\hocolim_{\cat D} B\Aut(X))^+$ and
observe that
\begin{equation*}
  K_0(\cat D) \x (\hocolim_{\cat D_\dagger} B\Aut(X))^+
\end{equation*}
is equivalent to $\loops |w N_\bullet \cat D|$ following
\cite{segal}. We now have the desired description.
\begin{theorem}
  If $\cat D$ is a Waldhausen $\oo$-category where all ingressions
  split up to homotopy, then
  \begin{equation*}
    K(\cat C) \simeq K_0(\cat C) \x (\hocolim_{\cat D_\dagger} B\Aut(X))^+
  \end{equation*}
\end{theorem}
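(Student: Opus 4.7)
My plan is to assemble the theorem from the two ingredients already established just before its statement: Proposition \ref{NequalsS}, which replaces the $S_\bullet$-construction by the simpler $N_\bullet$-construction in the presence of splitting, and the group-completion identification of $\loops|wN_\bullet\cat D|$ that is attributed to \cite{segal}. Since $K(\cat D) \coloneq \loops|wS_\bullet\cat D|$ by definition, Proposition \ref{NequalsS} immediately yields $K(\cat D)\simeq \loops|wN_\bullet\cat D|$, and it then remains only to identify this loop space with $K_0(\cat D)\x(\hocolim_{\cat D_\dagger}B\Aut(X))^+$.

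To make the second identification, I would unpack the structure of $wN_\bullet\cat D$: the simplicial direction encodes the coproduct $\vee$, making $|wN_\bullet\cat D|$ the classifying space of the symmetric monoidal groupoid $(\iota\cat D, \vee)$. By the standard decomposition into connected components indexed by isomorphism classes $[X]\in\pi_0\iota\cat D$, one has $|\iota\cat D|\simeq \coprod_{[X]} B\Aut(X)$, and the diagonal-type maps induced by wedging on the right with a fixed ingression $X\cof X\vee A$ give the transition maps in the filtered system whose homotopy colimit is $\hocolim_{\cat D_\dagger} B\Aut(X)$. The monoid structure on this colimit (taken on $\pi_0$) is precisely the abelian monoid whose group completion is $K_0(\cat D)$, and Segal's group completion theorem (\cite{segal}) then identifies $\loops|wN_\bullet\cat D|$ with the product of $K_0(\cat D)$ and the group completion of $\hocolim_{\cat D_\dagger}B\Aut(X)$.

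The main obstacle, in my view, is making the Segal argument fully rigorous in the $\oo$-categorical setting: specifically, verifying that the simplicial space $wN_\bullet\cat D$ is a special $\Gamma$-space (equivalently, that the Segal maps are equivalences) and that the resulting topological monoid $|wN_\bullet\cat D|$ indeed has the homotopy type described by the homotopy colimit of automorphism spaces. The splitting hypothesis on $\cat D$ is what makes the Segal maps equivalences: any map in $\cat D$ is, up to homotopy, a coproduct inclusion, so the fibers of the degeneracy-induced projections $N_n\cat D\to N_1\cat D$ are products, exactly as in Waldhausen's original argument in \cite{1126}*{\S 1.8}. Once this is checked, the identification of components by $[X]$ and of loops by $B\Aut(X)$ is formal.

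Putting everything together, the proof reduces to the following short chain of equivalences:
\begin{equation*}
K(\cat D) = \loops|wS_\bullet\cat D| \;\simeq\; \loops|wN_\bullet\cat D| \;\simeq\; K_0(\cat D)\x(\hocolim_{\cat D_\dagger}B\Aut(X))^+,
\end{equation*}
with the first equivalence by Proposition \ref{NequalsS} and the second by the Segal group-completion argument, completing the description of $K(\cat D)$ in the spirit of Quillen's plus construction.
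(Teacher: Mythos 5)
Your proposal is correct and follows essentially the same route as the paper: pass from $wS_\bullet\cat D$ to $wN_\bullet\cat D$ via Proposition \ref{NequalsS}, then invoke Segal's group-completion identification of $\loops|wN_\bullet\cat D|$ with $K_0(\cat D)\x(\hocolim_{\cat D_\dagger}B\Aut(X))^+$. The extra detail you supply on verifying the Segal condition and decomposing $|\iota\cat D|$ into components is a reasonable elaboration of what the paper leaves as an appeal to \cite{segal}.
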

\begin{corollary}[``plus equals $Q$'' for weight structures]\label{plus-equals-Q}
  If $\cat C_{\heart w}$ is the heart of a weight structure on a
  stable $\oo$-category,
  \begin{equation*}
    K(\cat C_{\heart w}) \simeq K_0(\cat C) \x (\hocolim_{\cat C_{\heart w, \dagger}} B \Aut(X))^+
  \end{equation*}
\end{corollary}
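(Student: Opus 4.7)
The plan is to deduce this directly from the preceding Theorem by taking $\cat D = \cat C_{\heart w}$, and then to reconcile the $K_0$ appearing on the right-hand side using the main sphere theorem. There is almost no work left once those two ingredients are combined; the corollary is essentially a change of notation.

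First I would verify the hypothesis of the preceding theorem: we need $\cat C_{\heart w}$ to be a Waldhausen $\oo$-category in which every ingression splits in the homotopy category. The Waldhausen structure on $\cat C_{\heart w}$ was set up in section \ref{background}, where an edge is declared ingressive precisely when its cofiber (computed in $\cat C$) again lies in $\cat C_{\heart w}$; with this convention $\cat C_{\heart w}$ is closed under the formation of the relevant pushouts because cofibers of ingressives stay in the heart by definition and finite coproducts in $\cat C_{\heart w}$ agree with those in $\cat C$. The splitting of ingressives in the homotopy category is precisely the content of Proposition \ref{split-heart}, whose proof uses Lemma \ref{lem1} and the fact that $h\cat C(C,A) = 0$ when $A$ and $C$ both lie in the heart and the rotation of the cofiber sequence places $A$ in weight $w\geq 1$ relative to $C$.

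Next I would apply the preceding theorem to $\cat D = \cat C_{\heart w}$, obtaining
\begin{equation*}
  K(\cat C_{\heart w}) \simeq K_0(\cat C_{\heart w}) \x \bigl(\hocolim_{\cat C_{\heart w, \dagger}} B\Aut(X)\bigr)^+ .
\end{equation*}
The final step is to identify $K_0(\cat C_{\heart w})$ with $K_0(\cat C)$. This is immediate from Theorem \ref{spherethm}, which produces an equivalence of spectra $K(\cat C_{\heart w}) \simeq K(\cat C)$ induced by the inclusion of the heart; taking $\pi_0$ of both sides yields $K_0(\cat C_{\heart w}) \simeq K_0(\cat C)$ (and in fact recovers Corollary \ref{bondarko_k0} as a special case). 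Substituting this identification into the displayed equivalence gives the stated formula.

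There is no real obstacle here: the entire content of the corollary has already been assembled in the two previous results, and the remaining job is bookkeeping. The only mild subtlety worth flagging is that one should check that the homotopy colimit indexed by $\cat C_{\heart w, \dagger}$ is taken over the same system of ingressives used in the preceding theorem — namely the block-sum maps $\Aut(X) \to \Aut(X')$ induced by choosing a splitting of an ingression $X \cof X'$ — but this is guaranteed by Proposition \ref{split-heart}, which is exactly what makes those maps well-defined up to homotopy.
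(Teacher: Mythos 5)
Your proposal is correct and follows essentially the same route as the paper, which gives no separate argument for the corollary: it is the preceding theorem applied to $\cat D = \cat C_{\heart w}$ (with the splitting hypothesis supplied by Proposition \ref{split-heart}), together with the identification $K_0(\cat C_{\heart w}) \simeq K_0(\cat C)$ from Theorem \ref{spherethm}. The only point worth noting is that this last identification uses the boundedness and non-degeneracy hypotheses of the sphere theorem, which the corollary's statement leaves implicit; you correctly flag that dependence.
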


\section{Examples of weight structures and
  applications}\label{examples}
In this section, we provide an overview of several examples of weight
structures. We produce applications of our main theorem and conjecture
about some future directions for research.

\subsection{The stable category and cellular truncation}

Let $\Sp^\fin$ denote the category of finite spectra. There is a
standard Postnikov $t$-structure on $h\Sp^\fin$ where
$\Sp^\fin_{t\geq 0}$ contains all connective ($(-1)$-connected)
spectra and $\Sp^\fin_{t\leq 0}$ consists of all $1$-coconnective
spectra (\ie those with homotopy groups concentrated in degrees
$\leq 0$). The $t$-structure decompositions are provided by taking
$n$-connected covers and truncating homotopy groups at degree
$n-1$. The heart of this $t$-structure is the abelian category of
finitely-generated groups included as the Eilenberg--Maclane spectra.

The weight structure is generated by the sphere spectrum. Let $B$
denote the collection of finite wedges of the sphere spectrum
$S^0$. $\Sp_{w\geq 0}$ is defined to be those spectra $E$ with
$h\Sp(X,E) = 0$ for any $X$ in $\Sigma^{k}B$ for $k< 0$. In other
words, $\Sp_{w\geq 0}$ is the subcategory of connective
spectra. $\Sp_{w\leq 0}$ is the defined via the orthogonality
condition: $E \in \Sp_{w\leq 0}$ if $h\Sp(E,Y)=0$ for all
$Y\in \Sp_{w\geq 1}$. $\Sp_{w\leq 0}$ turns out to contain those
spectra which are $0$-skeleta---which can be distinguished by their
homology: $E \in \Sp_{w\leq n}$ if and only if $H\Z_*(E) = 0$ for
$* > 0$ and $H\Z_0(E)$ is a free abelian group. The heart
$\Sp_{\heart w}$ of this weight structure consists of all spectra
weakly equivalent to finite wedges of $S^0$.

\begin{corollary}
  The $K$-theory of the sphere spectrum, $K(S^0)\coloneq K(\Sp_\fin)$,
  is equivalent to the $K$-theory of the full additive
  $\oo$-subcategory on finite wedge sums of the sphere spectrum
  $S^0$. The latter has a plus-construction description $K(S^0) \simeq
  K(\Z) \x B\underline{\Aut}(S^0)^+$, where $\underline{\Aut}(S^0)$ is
  the homotopy colimit $\hocolim_{n \in (\textrm{FinSet},\textrm{Inj})}
  B\Aut(\bigvee_n S^0, \bigvee_n S^0)$ over the skeleton of the category of
  finite sets and injections.
\end{corollary}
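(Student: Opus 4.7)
The plan is to verify that the cellular weight structure on $\Sp^\fin$ generated by $S^0$ satisfies the hypotheses of Theorem \ref{spherethm}, apply the main theorem to reduce $K(S^0)$ to the $K$-theory of the heart, and then apply Corollary \ref{plus-equals-Q} to produce the plus-construction description.

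First I would verify that the weight structure on $\Sp^\fin$ generated by $\{S^0\}$ (via Proposition \ref{generatingweightstheorem}, as recalled in the example of the Postnikov weight structure on finite spectra) is bounded and non-degenerate. Boundedness follows from the existence of a finite CW structure on each finite spectrum, which supplies a bounded cellular weight filtration. Non-degeneracy follows from the Whitehead theorem: a finite spectrum with vanishing integral homology in every degree is contractible, so $\bigcap_n \Sp^\fin_{w \leq -n}$ consists of zero objects, and dually for the connective intersection. Theorem \ref{spherethm} then gives $K(\Sp^\fin) \simeq K(\Sp^\fin_{\heart w})$.

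Second I would identify $\Sp^\fin_{\heart w}$ with the additive $\oo$-subcategory on finite wedge sums of $S^0$. By Proposition \ref{generatingweightstheorem}, every finite wedge of $S^0$ lies in the heart. Conversely, any $X \in \Sp^\fin_{\heart w}$ satisfies $\pi_i X = 0$ for $i < 0$, $H_i(X;\Z) = 0$ for $i > 0$, and $H_0(X;\Z)$ a finitely generated free abelian group $\Z^n$; the Hurewicz theorem then identifies $\pi_0 X$ with this group, and choosing $n$ generators produces a map $\bigvee_n S^0 \to X$ which is an integral homology equivalence, hence an equivalence by Whitehead. (That retracts of finite wedges of $S^0$ are again finite wedges uses that $\Z$ is a PID.)

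Third I would apply Corollary \ref{plus-equals-Q} to obtain
\begin{equation*}
K(\Sp^\fin_{\heart w}) \simeq K_0(\Sp^\fin) \x \Bigl(\hocolim_{\Sp^\fin_{\heart w, \dagger}} B\Aut(X)\Bigr)^+.
\end{equation*}
The description of the heart yields $K_0(\Sp^\fin) \isom \Z$, generated by $[S^0]$ (identified with $K_0(\Z)$). By Proposition \ref{split-heart}, ingressions in the heart split, so every ingression is equivalent to a wedge inclusion $\bigvee_n S^0 \cof \bigvee_n S^0 \vee \bigvee_k S^0$. Passing to a skeleton of the ingressive subcategory, the cofinal diagram is $n \mapsto \bigvee_n S^0$ with transition maps induced by injections of finite sets, so the homotopy colimit reduces to $\hocolim_{n \in (\textrm{FinSet},\textrm{Inj})} B\Aut(\bigvee_n S^0, \bigvee_n S^0)$, matching the stated description.

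The main obstacle will be step two, as the identification of the heart demands both a careful Whitehead-theoretic argument in the $\oo$-category $\Sp^\fin$ and the PID observation for retracts. A secondary subtlety appears in step three: one must verify that $(\textrm{FinSet}, \textrm{Inj})$ is genuinely cofinal in the ingressive subcategory $\Sp^\fin_{\heart w, \dagger}$, which requires tracking how higher automorphisms of $\bigvee_n S^0$ (not just permutations) interact with the homotopy colimit, and checking that wedge-summand inclusions are cofinal among ingressions up to the splittings provided by Proposition \ref{split-heart}.
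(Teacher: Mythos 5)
Your proposal is correct and follows exactly the route the paper intends: the corollary is stated as an immediate consequence of Theorem \ref{spherethm} and Corollary \ref{plus-equals-Q} applied to the cellular weight structure on $\Sp^\fin$ described in the surrounding text, and your verification of boundedness, non-degeneracy, the identification of the heart with finite wedges of $S^0$, and the cofinality of the wedge powers supplies precisely the details the paper leaves implicit. Your reading of the factor $K(\Z)$ as $K_0(\Sp^\fin)\isom \Z \isom K_0(\Z)$ (rather than the full $K$-theory spectrum of the integers) is the correct one, consistent with the form of Corollary \ref{plus-equals-Q}.
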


The cellular truncation applies to many other cases where there is
already a robust notion of cellularity. For example, when $R$ is a
connective ring spectrum, every finite cell spectrum over $R$ admits a
CW-cell structure. Hence, there is a weight structure on $f\cat C_R$,
the $\oo$-category of finite cell $R$-modules (in the sense of
\cite{EKMM}), where weight decompositions are given by truncating with
a CW-skeleton. Likewise, the $\oo$-category $f\cat{CW}_R$ of finite
CW-spectra over $R$ admits a cellular truncation weight structure as
well. The heart of both weight structures will consist of retracts of
finite wedge sums of copies of $R$.
\begin{corollary}[see also \cite{EKMM}*{IV.3.1}]
  If $R$ is a connective ring spectrum, then the $K$-theory of finite
  CW-spectra over $R$ and finite cell spectra over $R$ are equivalent
  $K(f\cat{CW}_R) \simeq K(f\cat{C}_R)$ and are both equivalent to the
  $K$-theory of the full additive $\oo$-subcategory of retracts of finite
  wedge sums of $R$.
\end{corollary}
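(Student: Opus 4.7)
The plan is to apply the main theorem (Theorem \ref{spherethm}) directly to each of the stable $\oo$-categories $f\cat{C}_R$ and $f\cat{CW}_R$ with its cellular truncation weight structure, and then observe that both hearts are identified with the same full subcategory, namely the additive closure $\cat P$ of finite wedge sums of $R$ under retracts.

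First I would verify that the cellular truncation weight structure on $f\cat{C}_R$ is bounded and non-degenerate. Boundedness is immediate from the defining property of being a \emph{finite} cell $R$-module: any such $M$ admits a CW-presentation with only finitely many cells, concentrated in some range $[-n,n]$ of dimensions, so $M\in (f\cat{C}_R)_{w\geq -n}\cap (f\cat{C}_R)_{w\leq n}$. For non-degeneracy, if $M$ lies in $\bigcap_n (f\cat{C}_R)_{w\geq n}$, then $\pi_*(M)=0$ for arbitrarily large $*$-bounds; since $R$ is connective and $M$ is built from finitely many cells this forces $M\simeq 0$, and dually for $(f\cat{C}_R)_{w\leq -n}$. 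The same verification works verbatim for $f\cat{CW}_R$, where the cellular filtration is intrinsic to a CW-presentation.

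Next I would identify the hearts. By the discussion preceding the corollary, both $(f\cat{C}_R)_{\heart w}$ and $(f\cat{CW}_R)_{\heart w}$ consist exactly of those objects weakly equivalent to retracts of finite wedge sums of copies of $R$; call this common additive $\oo$-subcategory $\cat P$. The natural comparison functor $f\cat{CW}_R\to f\cat{C}_R$ (realizing a CW-spectrum as its underlying cell $R$-module) is exact and preserves the cellular weight structures, and on hearts it induces an equivalence onto $\cat P$ since on both sides the heart is precisely the additive closure of $R$ under retracts. Thus there is an equivalence of Waldhausen $\oo$-categories $(f\cat{CW}_R)_{\heart w}\simeq \cat P\simeq (f\cat{C}_R)_{\heart w}$ compatible with the respective pair structures.

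Finally, two applications of Theorem \ref{spherethm} produce
\begin{equation*}
  K(f\cat{CW}_R)\simeq K((f\cat{CW}_R)_{\heart w})\simeq K(\cat P)\simeq K((f\cat{C}_R)_{\heart w})\simeq K(f\cat{C}_R),
\end{equation*}
which is the desired conclusion. The only genuine content here beyond invoking the main theorem is the identification of the hearts, and the mild point to double-check is that the CW-skeleton decompositions do satisfy the axioms of a weight structure on each of $f\cat{C}_R$ and $f\cat{CW}_R$ (orthogonality in particular), which is handled by Proposition \ref{generatingweightstheorem} applied to the single generator $R$ regarded as its own negative class—$R$ is compact, connective, and weakly generates both categories, so the proposition produces the cellular weight structure and simultaneously certifies that its heart is the retract closure of finite wedges of $R$.
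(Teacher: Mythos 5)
Your proposal is correct and follows essentially the same route as the paper: the paper's (implicit) proof is exactly to equip both $f\cat{C}_R$ and $f\cat{CW}_R$ with the cellular truncation weight structure (using connectivity of $R$ to guarantee every finite cell module admits a CW-structure), identify both hearts with the retract closure of finite wedge sums of $R$, and apply Theorem \ref{spherethm} twice. Your additional verifications of boundedness, non-degeneracy, and the appeal to Proposition \ref{generatingweightstheorem} are details the paper leaves to the reader, and they check out.
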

The plus-construction description of $K(R)$ is equivalent to that of
\cite{EKMM}*{IV.7.1}.

\subsection{Chain complexes, the brutal truncation, and a theorem of
  Gillet and Waldhausen}

Let $\cat E$ be an exact category (in the sense of Quillen). We
consider the Waldhausen category $\Ch^\bdd(\cat E)$ of bounded chain
complexes in $\cat E$. Cofibrations in this category are admissible
monomorphisms and weak equivalences are chain homotopy
equivalences. The relative nerve produces a corresponding stable
Waldhausen $\oo$-category which also denote $\Ch^\bdd(\cat A)$ by
abuse of notation. We defend this notation with the fact that the
algebraic $K$-theory spectra of these objects coincide
\cite{barwick}*{10.16}.

$\Ch^\bdd(\cat E)$ admits a bounded weight structure where
$\Ch^\bdd(\cat E)_{w\geq 0}$ consists of chain complexes (chain)
homotopy equivalent to ones concentrated in nonnegative
degrees. Similary, $\Ch^{\bdd}(\cat E)_{w\leq 0}$ consists of
complexes homotopy equivalent to complexes concentrated in nonpositive
degrees. The heart of this weight structure is chain complexes
homotopy equivalent to complexes concentrated in degree $0$, hence it
is equivalent to $\cat E$ where morphisms between objects are replaced
by morphisms in $\Ch^{\bdd(\cat E)}$. Weight decompositions are
provided by the \term{brutal truncation}:
\begin{equation*}
  \begin{tikzcd}
   M_{w\leq n}  \ar[d]&   \Big(\cdots & \ar[l] M_{n-1} \ar[d,equals] & \ar[l] M_{n} \ar[d, equals] & \ar[l] 0 \ar[d] & \ar[l] 0 \ar[d] & \ar[l] \cdots\Big) \\
   M\ar[d] & \Big(\cdots & \ar[l] M_{n-1} \ar[d] & \ar[l] M_{n} \ar[d] & \ar[l] M_{n+1} \ar[d, equals] & \ar[l] M_{n+2} \ar[d, equals] & \ar[l] \cdots\Big) \\
    M_{w\geq n+1} & \Big(\cdots & \ar[l] 0 & \ar[l] 0 & \ar[l] M_{n+1} & \ar[l] M_{n+2} &
    \ar[l] \cdots\Big)
  \end{tikzcd}
\end{equation*}
If we write $\Ch^\bdd(\cat E)_{\heart w}$ for the heart of this weight
structure, our main theorem implies the following.
\begin{corollary}\label{cor_K_of_E}
  The algebraic $K$-theory of $\Ch^\bdd(\cat E)$ is equivalent to
  Quillen's algebraic $K$-theory of $\cat E$.
\end{corollary}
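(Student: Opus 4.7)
The plan is to apply Theorem \ref{spherethm} directly to the stable $\oo$-category associated to $\Ch^\bdd(\cat E)$ and then identify the $K$-theory of the heart with Quillen's construction for $\cat E$.

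First I would verify that the described data genuinely defines a bounded non-degenerate weight structure on $\Ch^\bdd(\cat E)$, viewed as a stable $\oo$-category via the relative nerve. Closure under retracts and finite coproducts of $\Ch^\bdd(\cat E)_{w \leq 0}$ and $\Ch^\bdd(\cat E)_{w \geq 0}$ is immediate. Orthogonality is the statement that a chain map from a complex concentrated in non-positive degrees to one concentrated in strictly positive degrees is null-homotopic; this is automatic degree-by-degree. The indicated brutal truncation diagram evidently fits into a fiber sequence, so it provides a weight decomposition. Boundedness holds because any bounded complex is equal to its own brutal truncation in all sufficiently large and small degrees, and non-degeneracy holds because a complex which is chain-homotopy equivalent to brutal truncations in arbitrarily high (resp. low) degrees is contractible.

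Next, I would identify the heart. An object of $\Ch^\bdd(\cat E)_{\heart w}$ is, by definition, a complex chain-homotopy equivalent to one whose brutal truncation at both $0$ and $-1$ gives back (resp. a contractible complex and) itself, hence equivalent to a complex concentrated in degree zero, i.e., an object of $\cat E$. Mapping spaces in the heart are discrete with $\pi_0$ equal to $\Hom_{\cat E}(-,-)$. The Waldhausen structure on the heart, where ingressions are those maps whose cofiber stays in the heart, picks out precisely the admissible monomorphisms of $\cat E$: if $f \colon A \to B$ is an admissible mono with admissible cokernel $C$, the mapping cone of $f$ is quasi-isomorphic (in fact, chain-homotopy equivalent, once one works modulo admissible splittings at each level in the construction of the cone) to $C$ in degree $0$; conversely, if the cofiber of $f$ lies in the heart, then $f$ is an admissible mono. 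Hence $\Ch^\bdd(\cat E)_{\heart w}$ agrees as a Waldhausen $\oo$-category with the nerve of $\cat E$ equipped with its Quillen-exact structure.

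Putting the pieces together, Theorem \ref{spherethm} gives $K(\Ch^\bdd(\cat E)) \simeq K(\Ch^\bdd(\cat E)_{\heart w})$, and the identification above together with \cite{barwick}*{10.16} identifies the latter with Quillen's $K(\cat E)$. The main obstacle is the middle step: confirming that the cofiber in the $\oo$-categorical heart of an admissible monomorphism in $\cat E$ is truly the admissible cokernel (up to chain homotopy, not just quasi-isomorphism) and, more subtly, that an arbitrary heart-ingression is, up to equivalence, of this form. I would handle this by exhibiting an explicit chain contraction for the mapping cone onto the cokernel using the admissibility of the mono, and by using that any map between complexes concentrated in degree zero is given by a single morphism of $\cat E$, so the question of being a heart-ingression reduces to a question in $\cat E$ itself.
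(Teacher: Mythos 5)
Your overall route is the same as the paper's: equip $\Ch^\bdd(\cat E)$ (with chain homotopy equivalences) with the brutal-truncation weight structure, apply Theorem \ref{spherethm}, identify the heart with $\cat E$, and quote a comparison between Barwick's $K$-theory of that object and Quillen's $Q$-construction. The verifications of orthogonality, boundedness, non-degeneracy, and the discreteness of mapping spaces in the heart are all fine, and you have correctly isolated the delicate step. The problem is that your resolution of that step is false. In the stable $\oo$-category obtained by localizing at \emph{chain homotopy} equivalences, the cofiber of $f\colon A\to B$ between complexes concentrated in degree $0$ is the mapping cone $[A\to B]$, and this cone is chain homotopy equivalent to a complex concentrated in degree $0$ \emph{only when $f$ is a split monomorphism}: a homotopy inverse $s\colon D\to[A\to B]$ with $ps=1_D$ forces $p_0s_0=1_D$ on the nose, and the homotopy $sp\simeq 1$ produces $h_0\colon B\to A$ with $h_0f=-1_A$, i.e., a retraction of $f$. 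The cone of a non-split admissible monomorphism (e.g., $\Z/2\hookrightarrow\Z/4$ over $\Z/4$) is quasi-isomorphic but not chain homotopy equivalent to the cokernel; no amount of ``working modulo admissible splittings at each level'' repairs this, because the obstruction is precisely the non-splitting of $f$ itself.

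Consequently the induced pair structure on $\Ch^\bdd(\cat E)_{\heart w}$ (ingressive $=$ cofiber in the heart) picks out only the \emph{split} admissible monomorphisms, so the heart you obtain is $\cat E$ with its split exact structure, not its given Quillen exact structure, and the final appeal to the comparison with the $Q$-construction does not go through as written. What Theorem \ref{spherethm} delivers directly is an equivalence with the direct-sum $K$-theory of the additive category $\cat E$, which differs from Quillen's $K(\cat E)$ already on $\pi_0$ for non-split exact categories. To be fair, the paper's own three-line proof elides exactly this point (it identifies the heart with ``the simplicial nerve of $\cat E$'' without specifying the exact structure before citing Barwick), so you have found the genuine gap; but your proposed fix asserts a false chain-level statement rather than closing it. A correct argument must either restrict attention to the split structure and compare $K^\oplus(\cat E)$ with $K^Q(\cat E)$ by other means, or work with a model in which the cofiber of an admissible monomorphism really is its admissible cokernel.
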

\begin{proof}
  The main theorem implies that $K(\Ch^\bdd(\cat E)_{\heart w}) \simeq
  K(\Ch^\bdd(\cat E))$ since the brutal truncation provides a bounded
  weight structure on bounded complexes. The heart $\Ch^\bdd(\cat
  E)_{\heart w}$ is equivalent to the simplicial nerve of $\cat E$
  (morphisms between objects in $\cat E$ are given by simplicial
  mapping spaces of morphisms between their chain complexes). By
  \cite{2013arXiv1301.4725B}*{3.11}, the Barwick $K$-theory of this
  (exact) $\oo$-category coincides with that from Quillen's
  $Q$-construction.
\end{proof}

\begin{lemma}\label{lemma_qi_chhtpy}
  Let $\cat E$ be an exact category which is idempotent-complete and
  let $\Ch^\bdd(\cat E)$ denote the Waldhausen category of bounded
  chain complexes on $E$ with cofibrations levelwise admissible
  monomorphisms. The $K$-theory spectrum $K(\Ch^\bdd(\cat E))$ is
  equivalent whether chain homotopy equivalences of quasi-isomorphisms
  are taken as weak equivalences.
\end{lemma}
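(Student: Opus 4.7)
The plan is to apply Waldhausen's fibration theorem \cite{1126}*{1.6.4} to the inclusion $w_\chhtpy \subseteq w_\qi$ of the two classes of weak equivalences on $\Ch^\bdd(\cat E)$. Both classes equip $\Ch^\bdd(\cat E)$ with a Waldhausen structure for the same cofibrations, namely levelwise admissible monomorphisms, and both are saturated and closed under extensions. The standard mapping cylinder on bounded chain complexes provides a cylinder functor satisfying the cylinder axiom for either choice of weak equivalences.

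Waldhausen's fibration theorem then produces a fiber sequence
\begin{equation*}
K(\chhtpy (\Ch^\bdd(\cat E))^{\qi}) \to K(\chhtpy \Ch^\bdd(\cat E)) \to K(\qi \Ch^\bdd(\cat E)),
\end{equation*}
where $(\Ch^\bdd(\cat E))^{\qi}$ denotes the full Waldhausen subcategory of $\qi$-acyclic complexes (that is, the bounded acyclic complexes in $\cat E$), equipped with chain homotopy equivalences as its weak equivalences. The lemma is therefore equivalent to the vanishing $K(\chhtpy (\Ch^\bdd(\cat E))^{\qi}) \simeq 0$.

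The main obstacle, and the only place the idempotent-completeness hypothesis enters essentially, is to establish this vanishing. The crucial classical fact I would invoke is that in an idempotent-complete exact category, every bounded acyclic complex is chain-homotopy equivalent to the zero complex. One proves this by induction on the length: the outermost admissible monomorphism in the acyclic complex has a cokernel again in $\cat E$, and idempotent-completeness guarantees that the admissible short exact sequences assembling the complex admit compatible splittings which piece together into a contracting chain homotopy. Once every object of $(\Ch^\bdd(\cat E))^{\qi}$ is $w_\chhtpy$-equivalent to $0$, its $S_\bullet$-construction is levelwise contractible and its $K$-theory spectrum vanishes, yielding the asserted equivalence $K(\chhtpy \Ch^\bdd(\cat E)) \simeq K(\qi \Ch^\bdd(\cat E))$.
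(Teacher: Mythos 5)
Your reduction is fine and matches the paper's: both arguments pass to the (co)fiber sequence associated to the inclusion $w_\chhtpy \subseteq w_\qi$ and reduce the lemma to showing that the $K$-theory of the bounded acyclic complexes, with chain homotopy equivalences as weak equivalences, is trivial. The gap is in how you establish that vanishing. Your ``crucial classical fact'' --- that in an idempotent-complete exact category every bounded acyclic complex is chain-homotopy equivalent to zero --- is false. Take $\cat E$ to be finitely generated abelian groups (abelian, hence idempotent-complete) and the acyclic complex $0 \to \Z \xrightarrow{2} \Z \to \Z/2 \to 0$: a contracting homotopy would in particular split the admissible epimorphism $\Z \twoheadrightarrow \Z/2$, which does not split. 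Idempotent-completeness does not make admissible short exact sequences split, so the ``compatible splittings'' you propose to assemble into a contraction do not exist in general. (If every acyclic were contractible, every quasi-isomorphism of bounded complexes would already be a chain homotopy equivalence, which is exactly what fails for general $\cat E$; contractibility of acyclics holds only in special cases such as complexes of projectives.)

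What the paper does instead is an additivity argument rather than a contractibility argument. Filter the acyclics by the subcategories $\Ch^{\bdd,\leq n}_\ac(\cat E)$ of acyclic complexes concentrated in degrees $[0,n]$. There is a short exact sequence of exact endofunctors on $\Ch^{\bdd,\leq n}_\ac(\cat E)$ whose subfunctor is the \emph{elementary} acyclic complex $M_n \xrightarrow{\ \isom\ } \im(d_n)$ (elementary acyclics \emph{are} contractible, so their $K$-theory is trivial) and whose quotient lands in $\Ch^{\bdd,\leq n-1}_\ac(\cat E)$. The additivity theorem then gives $K(\Ch^{\bdd,\leq n}_\ac(\cat E)) \simeq K(\Ch^{\bdd,\leq n-1}_\ac(\cat E))$, and induction plus commutation of $K$-theory with directed colimits yields the vanishing. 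Idempotent-completeness enters precisely to guarantee that $\im(d_n)$ and $\coker(d_n)$ are objects of $\cat E$, so that these endofunctors are defined --- not to produce splittings. Your proof needs this step replaced; as written, the vanishing claim it rests on is incorrect.
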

\begin{proof}
  Let $v_\chhtpy$ and $v_\qi$ denote the class of chain homotopy
  equivalences and quasi-isomorphisms, respectively. Consider the
  localization sequence
  \begin{equation*}
    K(\Ch^\bdd_\ac(\cat E), v_\chhtpy) \to K(\Ch^\bdd(\cat E),v_\chhtpy) \to K(\Ch^\bdd(\cat E),v_\qi)
  \end{equation*}
  where $\Ch^\bdd_\ac(\cat E)$ denotes the full Waldhausen subcategory
  on acyclic bounded complexes. The lemma follows from proving the
  $K$-theory of the bounded acyclics is trivial.

  It suffices to check for non-negative chain complexes. Filter the
  acyclics by subcategories $\Ch^{\bdd,\leq n}_\ac(\cat E)$ of acyclic
  complexes concentrated in degrees $0 \leq * \leq n$. The short exact
  sequence of complexes
  \begin{equation*}
    \begin{tikzcd}
      0 \ar[d] \ar[r] & 0 \ar[d] \ar[r] & 0 \ar[d] \\
      M_n \ar[d, "\isom"] \ar[r, equals] & M_n \ar[r] \ar[d, "d_n"] & 0 \ar[d] \\
      \im(d_n) \ar[d] \ar[r, hook] & M_{n-1} \ar[d, "d_{n-1}"] \ar[r, two heads] & \coker(d_n) \ar[d, "d_{n-1}"] \\
      0 \ar[r]\ar[d] & M_{n-2} \ar[r, equals] \ar[d] & M_{n-2} \ar[d] \\ 
      \vdots & \vdots & \vdots
    \end{tikzcd}
  \end{equation*}
  gives a short exact sequence of endofunctors on the acyclics
  $\Ch^{\bdd,\leq n}_\ac(\cat E)$. We note that $\cat E$ is required
  to be closed under idempotents for these to take values in chain
  complexes on $\cat E$. If $\cat E$ is not idempotent-complete, one
  can pass to it's idempotent closure or Karoubi envelope to apply
  this result.

  The first column is an elementary acyclic complex composed of single
  isomorphism. Elementary acyclic complexes are all chain homotopy
  equivalent to the zero complex. Hence, the algebraic $K$-theory of
  the elementary acyclics is trivial. Since the right column takes
  values in $\Ch_\ac^{\bdd, \leq n-1}(\cat E)$, we can apply the
  additivity theorem to conclude $K(\Ch_\ac^{\bdd, \leq n}(\cat E))
  \simeq K(\Ch_\ac^{\bdd, \leq n-1}(\cat E))$. Since $\Ch_\ac^{\bdd,
    \leq 1}(\cat E)$ is equivalent to the category of elementary
  acyclics concentrated in degrees $0$ and $1$, we conclude
  $K(\Ch^{\bdd, \leq n}_\ac(\cat E))\simeq *$ for all $n$ and conclude
  that $K(\Ch^\bdd_\ac(\cat E))\simeq *$ since $K$-theory commutes
  with directed colimits.
\end{proof}

We now arrive at a new proof of the Gillet--Waldhausen Theorem.
\begin{corollary}[Gillet--Waldhausen Theorem,\cite{MR1106918}*{1.11.7}]\label{gillet-wald}
  For an exact category $\cat E$ which is idempotent-complete,
  Quillen's algebraic $K$-theory $K(\cat E)$ is homotopy equivalent to
  the Waldhausen algebraic $K$-theory of $\Ch^\bdd(\cat E)$, the
  Waldhausen category of bounded chain complexes on $\cat E$ where
  cofibrations are taken to be admissible monomorphisms and weak
  equivalences are quasi-isomorphisms of chain complexes.
\end{corollary}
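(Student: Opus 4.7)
The plan is to assemble this corollary from the two preceding results, namely Corollary \ref{cor_K_of_E} and Lemma \ref{lemma_qi_chhtpy}, which together have done essentially all the work. Corollary \ref{cor_K_of_E} identifies the Waldhausen $K$-theory of $\Ch^\bdd(\cat E)$ equipped with chain homotopy equivalences with Quillen's $K(\cat E)$, using the brutal truncation weight structure and the main theorem. Lemma \ref{lemma_qi_chhtpy} then identifies the $K$-theory of $\Ch^\bdd(\cat E)$ with chain homotopy equivalences with its $K$-theory with quasi-isomorphisms, using a localization sequence and the vanishing of the $K$-theory of the bounded acyclics (which is where idempotent-completeness enters, to guarantee that image factorizations stay in $\cat E$).

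So the proof is essentially a one-line combination: first invoke Lemma \ref{lemma_qi_chhtpy} to replace $v_\qi$ with $v_\chhtpy$ in the Waldhausen $K$-theory, then invoke Corollary \ref{cor_K_of_E} to identify that with $K(\cat E)$. The idempotent-completeness hypothesis is used only for the first of these steps; the second step makes no such requirement since the brutal truncation weight structure exists on $\Ch^\bdd(\cat E)$ for any exact category $\cat E$, and its heart is equivalent to $\cat E$ regardless.

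I anticipate no real obstacle here: this is a compositional corollary and both inputs are already established in the excerpt. The only subtle point worth noting explicitly in the write-up is that the inclusion $\cat E \hookrightarrow \Ch^\bdd(\cat E)$ as complexes concentrated in degree zero is the map inducing the equivalence, so that the statement matches Thomason--Trobaugh and Gillet's original formulation. Thus the proof I would write is simply:
\begin{equation*}
K(\cat E) \;\simeq\; K(\Ch^\bdd(\cat E), v_\chhtpy) \;\simeq\; K(\Ch^\bdd(\cat E), v_\qi),
\end{equation*}
where the first equivalence is Corollary \ref{cor_K_of_E} and the second is Lemma \ref{lemma_qi_chhtpy}, with both equivalences induced by the evident inclusion and localization functors.
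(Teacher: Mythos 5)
Your proposal matches the paper's proof exactly: the paper also obtains the result by combining Corollary \ref{cor_K_of_E} (identifying Quillen's $K(\cat E)$ with $K(\Ch^\bdd(\cat E), v_\chhtpy)$ via the brutal truncation weight structure) with Lemma \ref{lemma_qi_chhtpy} (exchanging chain homotopy equivalences for quasi-isomorphisms). Your observations about where idempotent-completeness enters and which functor induces the equivalence are correct and consistent with the paper.
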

\begin{proof}
  Corollary \ref{cor_K_of_E} implies that Quillen's $K(\cat E)$ is
  equivalent to $K(\Ch^\bdd(\cat E), v_{\chhtpy})$ and Lemma
  \ref{lemma_qi_chhtpy} implies that $K(\Ch^\bdd(\cat E), v_\chhtpy)
  \simeq K(\Ch^\bdd(\cat E), v_\qi)$ as desired.
\end{proof}
\begin{corollary}
  If $\Proj^\fg(R)$ denotes the exact category of finitely-generated projective
  $R$-modules, $K(\Proj^\fg(R))\simeq K(\Ch^\bdd(\Proj^\fg(R)))$.
\end{corollary}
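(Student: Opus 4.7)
The plan is to realize this corollary as a direct instance of the Gillet--Waldhausen theorem (Corollary \ref{gillet-wald}) applied to the exact category $\cat E = \Proj^\fg(R)$. The only real content, beyond invoking that result, is to check that $\Proj^\fg(R)$ satisfies the hypotheses: namely, that it is an idempotent-complete exact category.

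First I would recall that $\Proj^\fg(R)$ is an exact category in Quillen's sense, with the exact structure given by split short exact sequences (equivalently, admissible monomorphisms are the split injections, since any short exact sequence of projectives splits). Next I would verify idempotent completeness: given a projective module $P$ and an idempotent $e \colon P \to P$, the image $eP$ is a direct summand of $P$ (complementary to $(1-e)P$), hence itself a finitely generated projective $R$-module. Thus every idempotent in $\Proj^\fg(R)$ has an image, so $\Proj^\fg(R)$ is idempotent-complete.

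With these hypotheses in hand, the corollary follows immediately: Corollary \ref{gillet-wald} applied to $\cat E = \Proj^\fg(R)$ yields
\begin{equation*}
  K(\Proj^\fg(R)) \simeq K(\Ch^\bdd(\Proj^\fg(R))),
\end{equation*}
where the right-hand side denotes Waldhausen $K$-theory of bounded chain complexes with admissible monomorphisms as cofibrations and quasi-isomorphisms as weak equivalences. I do not expect any real obstacle; the substantive work was done in proving Corollary \ref{gillet-wald}, and the present statement is essentially a specialization. If desired, one could also note directly that the weight structure on $\Ch^\bdd(\Proj^\fg(R))$ by brutal truncation has heart equivalent to $\Proj^\fg(R)$ (viewed as complexes concentrated in degree zero), so the main theorem plus the quasi-isomorphism/chain-homotopy comparison of Lemma \ref{lemma_qi_chhtpy} gives the result without reinvoking Corollary \ref{gillet-wald}.
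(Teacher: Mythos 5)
Your proposal is correct and matches the paper's intent: the corollary is stated as an immediate specialization of Corollary \ref{gillet-wald} to $\cat E = \Proj^\fg(R)$, and your verification that $\Proj^\fg(R)$ is an idempotent-complete exact category (images of idempotents are direct summands, hence finitely generated projective) is exactly the hypothesis check needed.
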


\subsection{The Resolution Theorem for exact categories and $G=K$ by
  way of the brutal truncation}

For a ring $R$, the $G$-theory of $R$ is defined to be algebraic
$K$-theory of the exact category of finitely-generated modules over
$R$.

\begin{corollary}[Resolution Theorem, \cite{MR3076731}*{Theorem
    V.3.1}]\label{resolution_theorem}
  Let $\cat P$ be a full subcategory of an exact category $\cat E$ so
  that $\cat P$ is closed under extensions and under kernels of
  admissible surjections in $\cat E$. Suppose in addition that every
  object $M$ in $\cat E$ admits a finite $\cat P$-resolution:
  \begin{equation*}
    0 \to P_n\to P_{n-1} \to \cdots \to P_1 \to P_0 \to M \to 0
  \end{equation*}
  then $K(\cat P)\simeq K(\cat E)$.
\end{corollary}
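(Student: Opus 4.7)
The plan is to build a bounded non-degenerate weight structure on the bounded derived $\oo$-category of $\cat E$ whose heart recovers $\cat P$, and then invoke Theorem \ref{spherethm} together with the Gillet--Waldhausen Theorem (Corollary \ref{gillet-wald}). Specifically, let $\cat D^\bdd(\cat E)$ denote the stable $\oo$-category obtained by localizing $\Ch^\bdd(\cat E)$ at quasi-isomorphisms. Corollary \ref{gillet-wald} applied to $\cat E$ (passing to the idempotent completion if necessary, which does not alter connective $K$-theory nor the resolution hypothesis) produces an equivalence $K(\cat E) \simeq K(\cat D^\bdd(\cat E))$. The goal then becomes identifying $K(\cat D^\bdd(\cat E))$ with $K(\cat P)$.

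I include $\cat P$ into $\cat D^\bdd(\cat E)$ as complexes concentrated in degree zero and apply Bondarko's generation theorem, Proposition \ref{generatingweightstheorem}, to this subcategory. Three conditions must be checked. Negativity follows because $\pi_0 \cat D^\bdd(\cat E)(P, \Sigma^n P') \isom \Ext^{-n}_{\cat E}(P, P')$ vanishes for $n > 0$, and the $\Ext$-groups computed via $\cat P$-resolutions coincide with those in $\cat E$ since $\cat P$ is closed under kernels of admissible surjections (a standard dimension-shifting argument). Weak generation is essentially the content of the resolution hypothesis: every $M\in \cat E$ is built by a finite cell construction out of $\cat P$ in the derived category, so by dimension-shifting an object of $\cat D^\bdd(\cat E)$ is built from its homology via the Postnikov filtration and from $\cat P$ via $\cat P$-resolutions. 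Any $X$ receiving no nonzero maps from shifts of $\cat P$-objects must therefore have $\Hom(X,X) = 0$. Boundedness of the generated weight structure follows because any bounded complex in $\cat E$ has finitely many nonzero homology groups, and stacking their finite $\cat P$-resolutions assembles to a bounded weight decomposition.

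The heart of the generated weight structure is a priori the closure of $\cat P$ in $\cat D^\bdd(\cat E)$ under retracts and finite coproducts; since $\cat P$ is closed under extensions and under kernels of admissible surjections in $\cat E$, it is closed under direct summands and the heart coincides with $\cat P$ itself. Theorem \ref{spherethm} then yields $K(\cat D^\bdd(\cat E)) \simeq K(\cat P)$, and combining with the Gillet--Waldhausen equivalence gives $K(\cat E) \simeq K(\cat P)$. The main obstacle I anticipate is the weak generation verification: one must argue that the finite $\cat P$-resolutions assemble coherently into honest weight decompositions in $\cat D^\bdd(\cat E)$, rather than merely bounding Ext-dimensions; this reduces to an induction on the homological length of $X$ using the resolution hypothesis at each stage, together with the closure of $\cat P$ under extensions so that successive skeleta remain in $\cat P$-complexes.
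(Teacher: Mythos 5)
Your overall strategy matches the paper's: both proofs put a bounded weight structure on the quasi-isomorphism localization of bounded complexes over $\cat E$ whose heart is $\cat P$ concentrated in degree zero, apply Theorem \ref{spherethm}, and then use Corollary \ref{gillet-wald} (equivalently Corollary \ref{cor_K_of_E} together with Lemma \ref{lemma_qi_chhtpy}) to identify the $K$-theory of bounded complexes with $K(\cat E)$. Where you differ is in how the weight structure is produced. The paper constructs it explicitly: fix a bounded quasi-isomorphic $\cat P$-replacement of each complex (this existence statement is the only place the resolution hypothesis enters) and take weight decompositions to be brutal truncations of that replacement, so boundedness and the shape of the heart fall out of the construction. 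You instead generate the weight structure from $\cat P$ via Proposition \ref{generatingweightstheorem}, checking negativity (which is indeed automatic, since negative Ext groups vanish) and weak generation. That route can be made to work, but it carries overhead the explicit construction avoids: Proposition \ref{generatingweightstheorem} only produces a weight structure on the subcategory $\cat C_-$ and only guarantees that $\cat P$ is \emph{contained} in the heart, so you must separately show that $\cat C_-$ exhausts $\cat D^\bdd(\cat E)$ and that the heart is no larger than $\cat P$. Your argument that ``$\cat P$ is closed under direct summands'' does not follow formally from the stated hypotheses: closure under kernels of admissible surjections is standardly read as applying to surjections between objects of $\cat P$, whereas a splitting $P \simeq M \oplus N$ only gives a surjection onto $N \in \cat E$. (The paper's own proof elides the same point when it asserts the heart ``is equivalent to $\cat P$''; in both versions the honest argument is the classical induction on resolution length showing that an object of $\cat E$ admitting both a finite $\cat P$-resolution and a retraction from an object of $\cat P$ already lies in $\cat P$.) In short, your proof is correct in outline and rests on the same two pillars --- the sphere theorem and Gillet--Waldhausen --- but the generation-theorem detour makes the heart identification and boundedness harder than the paper's direct brutal-truncation construction, while the essential input (every bounded complex has a bounded $\cat P$-replacement) is identical.
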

\begin{proof}
  The hypotheses imply that every bounded chain complex in $\cat E$
  admits a bounded quasi-isomorphic $\cat P$-replacement. Put a weight
  structure on $\Ch^\bdd(\cat E)$ by fixing $\cat P$-replacements and
  applying brutal truncation. The heart of this weight structure is
  equivalent to $\cat P$, included as complexes concentrated in degree
  $0$. Applying the sphere theorem, we get that $K(\cat P)\simeq
  K(\Ch^\bdd(\cat E))$. The standard brutal truncation on
  $\Ch^\bdd(\cat E)$ produces an equivalence $K(\Ch^\bdd(\cat
  E))\simeq K(\cat E)$ by Corollary \ref{cor_K_of_E}.
\end{proof}
For Noetherian regular rings, all modules admit finite projective resolutions. The standard corollary of the resolution theorem follows.
\begin{corollary}[``$G=K$'']
  When $R$ is a Noetherian regular ring, $K(R) \simeq G(R)$.
\end{corollary}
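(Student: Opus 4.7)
The plan is to apply the Resolution Theorem (Corollary \ref{resolution_theorem}) with $\cat E$ the exact category of finitely generated $R$-modules (so that $K(\cat E) = G(R)$ by definition) and $\cat P \subset \cat E$ the full subcategory on finitely generated projective $R$-modules (so that $K(\cat P) = K(R)$). Everything reduces to checking the three hypotheses of the theorem.

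For the closure hypotheses: given a short exact sequence $0 \to P' \to M \to P'' \to 0$ in $\cat E$, projectivity of $P''$ splits the sequence, so $M \isom P' \oplus P''$. In the first case, if $P'$ and $P''$ lie in $\cat P$ then so does $M$, verifying closure under extensions. In the second case, if $M$ and $P''$ lie in $\cat P$ then $P'$ appears as a direct summand of the finitely generated projective $M$, hence is itself finitely generated projective. This verifies closure of $\cat P$ under kernels of admissible surjections.

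The third and key hypothesis---that every finitely generated $R$-module admits a finite resolution by finitely generated projective modules---is precisely the content of the Noetherian regularity assumption. Since $R$ is Noetherian, any finitely generated $M$ sits in a surjection $R^{n_0} \twoheadrightarrow M$ whose kernel is again finitely generated; iterating yields a resolution by finitely generated projectives. Regularity of a Noetherian ring is equivalent to $R$ having finite global dimension (Serre), which ensures that after finitely many stages the kernel is itself projective, so the resolution terminates with $P_n \to \cdots \to P_0 \to M \to 0$.

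With all three hypotheses verified, Corollary \ref{resolution_theorem} yields $K(R) = K(\cat P) \simeq K(\cat E) = G(R)$. There is no serious obstacle beyond invoking the standard characterization of regular Noetherian rings in terms of finite global dimension; the whole argument is essentially a reformulation of the classical deduction of $G=K$ from Quillen's original resolution theorem, now routed through the weight-structure proof of Corollary \ref{resolution_theorem}.
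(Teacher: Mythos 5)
Your proposal is correct and follows exactly the route the paper takes: apply Corollary \ref{resolution_theorem} with $\cat P = \Proj^\fg(R)$ and $\cat E = \Mod^\fg(R)$, using that Noetherian regularity (finite global dimension) supplies the finite projective resolutions. The paper's proof is a one-liner that leaves the hypothesis checks implicit; your verification of closure under extensions and kernels of admissible surjections is a harmless elaboration of the same argument.
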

\begin{proof}
  The resolution theorem implies $K(\Proj^\fg(R))\simeq
  K(\Mod^\fg(R))$.
\end{proof}

\subsection{Categories of motives}

In \cite{2009arXiv0903.0091B}, Bondarko establishes a weight structure
on Voevodsky's triangulated category of motives. These are constructed
from the bounded chain complexes of smooth varieties (with smooth
correspondences as morphisms) by localizing and forming the idempotent
completion. Within this category, the \emph{Chow motives} are cut out
by smooth projective varieties (with morphisms smooth correspondences
modulo rational equivalences). Bondarko builds a ``Chow'' weight
structure on $\operatorname{DM}^{\textrm{eff}}_{\textrm{gm}}$ the
category of effective geometric motives whose heart is the effective
Chow motives \cite{MR2746283}. His $K_0$ version of the sphere theorem
computes $K_0(\operatorname{DM}^{\textrm{eff}}_{\textrm{gm}}) \simeq
K_0(\operatorname{Chow}^{\textrm{eff}})$.

Effective geometric motives are constructed as a localization of
presheaves of abelian groups on smooth schemes. Let
$\operatorname{DM}^{\textrm{eff}}_{\textrm{gm}, \oo}$ denote the stable
$\oo$-category produced by this construction, so that the triangulated
category of effective geometric motives is its homotopy
category. Bondarko's weight structure produces a weight structure on
$\operatorname{DM}^{\textrm{eff}}_{\textrm{gm},\oo}$ whose heart is the full
additive $\oo$-subcategory on the effective Chow motives, which we
denote $\operatorname{Chow}^{\textrm{eff}}_\oo$. Applying the sphere
theorem gives the following new result.
\begin{corollary}
  There is an equivalence of connective $K$-theory spectra
  \begin{equation*}
    K(\operatorname{DM}^{\textrm{eff}}_{\textrm{gm},\oo}) \simeq
    K(\operatorname{Chow}^{\textrm{eff}}_\oo).
  \end{equation*}
\end{corollary}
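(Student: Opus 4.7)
The plan is to apply the main sphere theorem (Theorem \ref{spherethm}) directly to the stable $\oo$-category $\operatorname{DM}^{\textrm{eff}}_{\textrm{gm},\oo}$ equipped with the lift of Bondarko's Chow weight structure. The key observation is that a weight structure on a stable $\oo$-category is, by our definition, nothing more than a weight structure on its homotopy category; so the structure constructed by Bondarko on the triangulated category $\operatorname{DM}^{\textrm{eff}}_{\textrm{gm}} = h\operatorname{DM}^{\textrm{eff}}_{\textrm{gm},\oo}$ automatically defines a weight structure $w$ on the $\oo$-category. The full additive $\oo$-subcategory on effective Chow motives, $\operatorname{Chow}^{\textrm{eff}}_\oo$, is then by construction the heart $(\operatorname{DM}^{\textrm{eff}}_{\textrm{gm},\oo})_{\heart w}$ of this weight structure.

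First I would recall from \cite{2009arXiv0903.0091B} that the Chow weight structure has $\operatorname{DM}^{\textrm{eff}}_{\textrm{gm}, w\geq 0}$ generated under extensions and retracts by motives of smooth projective varieties placed in non-negative weights, and symmetrically for $w\leq 0$. Since every effective geometric motive is built from finitely many smooth projective varieties via finitely many cones and retracts, each such object lies in $\operatorname{DM}^{\textrm{eff}}_{\textrm{gm}, w\geq -N} \cap \operatorname{DM}^{\textrm{eff}}_{\textrm{gm}, w\leq N}$ for some $N$, which verifies that $w$ is bounded. Non-degeneracy of $w$ is also established in \cite{2009arXiv0903.0091B}: an object with arbitrarily high and arbitrarily low weight has trivial mapping spaces into and out of every Chow motive, and since the Chow motives generate, such an object is zero.

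With boundedness and non-degeneracy in hand, Theorem \ref{spherethm} applies verbatim to $\cat C = \operatorname{DM}^{\textrm{eff}}_{\textrm{gm},\oo}$ with the weight structure $w$ above, yielding the asserted equivalence of connective $K$-theory spectra
\begin{equation*}
  K(\operatorname{DM}^{\textrm{eff}}_{\textrm{gm},\oo}) \simeq K(\operatorname{Chow}^{\textrm{eff}}_\oo)
\end{equation*}
induced by the inclusion of the heart. The main (and only genuine) obstacle is the bookkeeping to transport Bondarko's construction from the triangulated level to the $\oo$-categorical level; but since our definition of a weight structure on a stable $\oo$-category is precisely a weight structure on the homotopy category, and since the heart is then defined to be the full $\oo$-subcategory on the relevant objects (\cf the remark following the definition of weight structures), no additional coherence argument is required. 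Thus the proof reduces to citing Bondarko's construction of the Chow weight structure together with Theorem \ref{spherethm}.
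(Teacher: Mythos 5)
Your proposal is correct and follows exactly the route the paper takes: Bondarko's Chow weight structure on the triangulated category of effective geometric motives is, by the paper's definition, already a weight structure on the underlying stable $\oo$-category, and Theorem \ref{spherethm} then applies with heart $\operatorname{Chow}^{\textrm{eff}}_\oo$. Your explicit verification of boundedness and non-degeneracy (which the paper leaves implicit, deferring to Bondarko) is a welcome addition but does not change the argument.
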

On $\pi_0$, this result reproduces that of Bondarko.

\subsection{Conjectural weight structures}

Blumberg, Gepner, and Tabuada introduce a category
$\mathcal{M}_{\textrm{loc}}$ of ``localizing noncommutative (spectral)
motives'' in \cite{BGT}.  This category is constructed from the
category of spectrum-valued presheaves on the $\oo$-category of small
stable $\oo$-categories. This is the category where non-connective
$K$-theory is co-represented. Blumberg has conjectured that
$\mathcal{M}_{\textrm{loc}}$ admits a weight structure whose heart is
the dualizable objects---the smooth and proper $dg$-categories.

Following Hill--Hopkins--Ravenel, the category of genuine $G$-spectra
admits interesting ``slice filtrations''. These are equivariant
analogues for Postnikov towers. The author conjectures that there are
adjacent slice weight structures generated by wedge sums of the
regular representation spheres. The heart of this weight structure
would contain finite wedge sums of all finite-dimensional
representation spheres concentrated in virtual degree
$0$.


\bibliographystyle{amsalpha}
\bibliography{references}

\end{document}